\newcommand\NN{{\mathbb N}}
\newcommand\RR{{{\mathbb R}}}
\newcommand\cA{{\mathcal A}}
\newcommand\cC{{\mathcal C}}
\newcommand\cD{{\mathcal D}}
\newcommand\cT{{\mathcal T}}
\newtheorem{theo}{Theorem}[section]
\newtheorem{lemm}[theo]{Lemma}
\newtheorem{prop}[theo]{Proposition}
\newtheorem{rema}[theo]{Remark}
\begin{document}

\title[Prandtl equation]
{Well-posedness of The Prandtl Equation in Sobolev Spaces}

\author{R. Alexandre}
\address{Radjesvarane Alexandre
\newline\indent
Department of Mathematics, Shanghai Jiao Tong University
\newline\indent
Shanghai, 200240, P. R. China
\newline\indent
and 
\newline\indent
Ecole Navale, Brest Lanv\'eoc Poulmic 29200, France }
\email{radjesvarane.alexandre@ecole-navale.fr}

\author{Y.-G. Wang}
\address{Ya-Guang Wang
\newline\indent
Department of Mathematics, Shanghai Jiao Tong University
\newline\indent
Shanghai, 200240, P. R. China}
\email{ygwang@sjtu.edu.cn}

\author{C.-J. Xu}
\address{Chao-Jiang Xu
\newline\indent
School of Mathematics, Wuhan University 430072,
Wuhan, P. R. China
\newline\indent
and \newline\indent
Universit\'e de Rouen, UMR 6085-CNRS,
Math\'ematiques
\newline\indent
Avenue de l'Universit\'e,\,\, BP.12, 76801 Saint
Etienne du Rouvray, France } \email{Chao-Jiang.Xu@univ-rouen.fr}

\author{T. Yang}
\address{Tong Yang
\newline\indent
Department of mathematics, City University of Hong Kong,
Hong Kong, P. R. China} \email{matyang@cityu.edu.hk}

\subjclass[2000]{35M13, 35Q35, 76D10, 76D03, 76N20}

\date{}

\keywords{Prandtl equation,  well-posedness theory, Sobolev spaces, energy method,
monotonic velocity field, Nash-Moser-H\"ormander iteration.}

\begin{abstract}
We develop a new approach to study the well-posedness theory of the Prandtl equation in
Sobolev spaces by using a {\it direct energy method} under a
monotonicity condition on the tangential velocity field
instead of using the Crocco transformation. Precisely, we firstly
investigate the linearized Prandtl equation
in some weighted Sobolev spaces when the tangential velocity of the  background state
is  monotonic in the normal variable. Then to cope with the loss of regularity
of the perturbation with respect to the background state due to the degeneracy of
the equation,  we apply  the Nash-Moser-H\"ormander iteration to obtain a well-posedness theory
 of classical solutions to the nonlinear Prandtl equation  when the initial data
 is a small perturbation of a monotonic shear flow.
\end{abstract}

\maketitle

\tableofcontents

\section{Introduction}\label{s1}

In this work, we study the well-posedness of the Prandtl equation which is the foundation
of the boundary layer theory introduced by Prandtl in 1904,  \cite{prandtl}.
It describes the behavior of the flow near a boundary  in the small viscosity limit
of an incompressible viscous flow with the non-slip boundary condition.
We consider the following initial-boundary value
problem,
\begin{equation}\label{prandtl}
\left\{\begin{array}{l} u_t + u u_x + vu_y + p_x
= u_{yy},\quad t>0,\quad x\in\RR,\quad y>0, \\
u_x +v_y =0, \\
u|_{y=0} = v|_{y=0} =0 , \ \lim\limits_{y\to+\infty} u = U(t,x), \\
u|_{t=0} =u_0 (x,y)\, , \end{array}\right.
\end{equation}
where $u(t,x,y)$ and $v(t,x,y)$ represent the tangential and normal
velocities of the boundary layer, with $y$ being the scaled normal variable
to the boundary, while $U(t,x)$ and $p(t,x)$ are the values on the
boundary of the tangential velocity and pressure of the outflow satisfying the Bernoulli law
\[
\partial_t U + U\partial_x U +\partial_x p=0.
\]

The well-posedness theories and the justification of the Prandtl equation remain as the challenging
 problems in the mathematical theory of fluid mechanics. Up to now, there are only  a few  rigorous
mathematical results.
 Under a monotonic assumption on the tangential
velocity of the outflow, Oleinik was the first to obtain
the local existence of classical solutions for the initial-boundary value problems
 in \cite{oleinik-1}, and this result together with some of her
works with collaborators were well presented in
the monograph \cite{oleinik-3}.  In addition
to Oleinik's monotonicity assumption on the velocity field, by imposing
 a so called favorable condition on the pressure, Xin and Zhang obtained the existence of global
 weak solutions to the Prandtl equation in \cite{xin-zhang}. All these well-posedness results were
 based on the Crocco transformation to overcome  the main difficulty caused by  degeneracy and
 mixed type of the equation.

Without the monotonicity assumption,  E and Engquist in \cite{e-2} constructed some finite time blowup
solutions to the Prandtl equation. And
in \cite{Samm}, Sammartino and Caflisch obtained the
local existence of analytic solutions to the Prandtl equation, and
a rigorous theory on the stability of boundary layers  with analytic data in the framework of the
abstract Cauchy-Kowaleskaya theory. This result was
extended to the function space which is
only analytic  in the tangential variable in \cite{cannone}.


In recent years, there have been some interesting works concerning the linear and nonlinear
instability of the Prandtl equation in the Sobolev spaces. In \cite{grenier}, Grenier showed
that the unstable Euler shear flow $(u_s(y), 0)$ with $u_s(y)$ having an inflection point
(the well-known Rayleigh's criterion) yields instability for the Prandtl equation. In the spirit of Grenier's approach, in \cite{GV-D}, G\'erard-Varet and Dormy
showed that if the shear flow profile $(u^s(t,y), 0)$ of the Prandtl equation has a non-degenerate
critical point, then it leads to a strong linear ill-posedness of the Prandtl equation in the
Sobolev framework. In a similar approach, in \cite{GV-N} G\'erard-Varet and Nguyen strengthened
the ill-posedness result of \cite{GV-D} for the linearized Prandtl equation for an unstable
shear flow. Moreover,
they also showed that if a solution, as a small perturbation of the unstable shear
flow, to the nonlinear Prandtl equation exists in
the Sobolev setting, then it cannot be Lipschitz continuous.
Along this direction, Guo and Nguyen in \cite{guo} proved that
the nonlinear Prandtl equation is ill-posed near non-stationary and non-monotonic shear flows, and  showed that
the asymptotic boundary-layer
expansion is not valid for non-monotonic shear layer flows in Sobolev spaces.
Hong and Hunter \cite{hong-hunter} studied the formation
 of singularities and instability in the unsteady inviscid
Prandtl equation. All these works show what happens in an unsteady boundary
layer separation. For the related mathematical
results and discussions, also see the review papers  \cite{caf, e-1}.

As mentioned in \cite{caf, GV-D, GV-N}, the local in time well-posedness of the Prandtl equation for
initial data in Sobolev space remains an open problem. Therefore, it would be very interesting
to recover  Oleinik's well-posedness results simply through direct energy estimates. And
this is the goal of this paper.

In this paper we consider the case of an uniform outflow $U =1$, which implies $p$ being a constant.
Consider the following problem for the Prandtl equation,
\begin{equation}\label{eq-1}
\left\{\begin{array}{l} u_t + u u_x + vu_y - u_{yy}=0,
\quad t>0,\quad x\in\RR,\quad y>0, \\
u_x +v_y =0, \\
u|_{y=0} = v|_{y=0} =0 , \ \lim\limits_{y\to +\infty}u = 1, \\
u|_{t=0} =u_0 (x,y)\, . \end{array}\right.
\end{equation}

We are going to study the well-posedness of the Prandtl equation around a monotonic
shear flow. More precisely, assume that
$$
u_0(x, y)=u_0^s(y)+\tilde{u}_0(x, y),
$$
where $u_0^s$ is  monotonic in $y$
\begin{equation}\label{oleinik-b}
\partial_y u_0^s (y)>0,\quad \forall y\geq 0.
\end{equation}

To state the main result, we first introduce the following
notations. Set $\Omega_T=[0, T]\times \RR^2_+$. For any non-negative integer
$k$ and real number $\ell$,  define
$$
\|u\|_{\mathcal{A}^{k}_{\ell}(\Omega_T)}=\Big(\sum_{k_1+
\left[\frac{k_2+1}{2}\right]\leq k}\|\langle y
\rangle^\ell\partial^{k_1}_{(t,x)}\partial^{k_2}_y u
\|^2_{L^2([0, T]\times\RR^2_+)}\Big)^{1/2}\,,
$$
and
$$\|u\|_{{\mathcal D}_{\ell}^k(\Omega_T)}=
\sum\limits_{k_1+[\frac{k_2+1}{2}]\le k}\|\langle y
\rangle^\ell\partial_{(t,x)}^{k_1}
\partial_y^{k_2}u\|_{L^\infty_{y}(L^2_{t,x})},$$
with $\langle y \rangle=(1+y^2)^{\frac 12}$.
When the function is independent of $t$ (or $x$) variable, we use the same
notations for the non-isotropic norms as above under the convention
that we do not take integration with respect to this variable.

The main result of this paper can be stated as follows.

\begin{theo}\label{theo1} Concerning the problem \eqref{eq-1}, we have the following
existence and stability results.

(1) Given any integer $k\ge 5$ and real number $\ell>\frac{1}{2}$, let the initial
data $u_0(x,y)=u_0^s(y)+\tilde{u}_0(x,y)$ satisfy the compatibility conditions of the
initial boundary value problem \eqref{eq-1} up to order $k+4$. Assume
the following two conditions:

(i) The monotonicity condition \eqref{oleinik-b} holds for $u_0^s$, and
 \begin{equation}\label{cond-1}
 \begin{cases}
 \partial_y^{2j}u_0^s(0)=0, \qquad \forall~0\le j\le k+4,\\
\|\langle y \rangle^\ell (u_0^s-1)\|_{{H}^{2k+9}(\RR_+^2)}+\|\frac{\partial^2_y u^s_0}{\partial_y u^s_0}\|_{{H}^{2k+7}(\RR_+^2)}\le C\,,
\end{cases}
\end{equation}
for a fixed constant $C>0$.

(ii) There exists a small constant $\epsilon>0$ depending only on  $u_0^s$, such that
\begin{equation}\label{cond-2}
\|\tilde{u}_0\|_{\cA^{2k+9}_{\ell}(\RR_+^2)}+
\|\frac{\partial_y\tilde{u}_0}{\partial_yu^s_0}\|
_{\cA^{2k+9}_{\ell}(\RR_+^2)}\le \epsilon.
\end{equation}

Then there is $T>0$, such that the problem \eqref{eq-1} admits a classical solution $u$ satisfying
\begin{equation}\label{space}
  u- u^s \in  \cA_\ell^{k}(\Omega_T), \,\, \frac{\partial_y(u-u^s)}{\partial_yu_0^s}\in
  \cA_\ell^{k}(\Omega_T), \,\, v\in \cD_0^{k-1}(\Omega_T).
\end{equation}
Here, note that $(u^s(t,y), 0)$ is the shear flow of the Prandtl equation defined by  the initial data $(u_0^s(y), 0)$.

(2) The solution is unique in the function space described in \eqref{space}. Moreover,
we have  the stability with respect to the initial data in the following sense.
For any given two initial data
$$
u_0^1=u_0^s+\tilde{u}^1_0,\quad u_0^2=u_0^s+\tilde{u}^2_0,
$$
if $u_0^s$ satisfy  \eqref{oleinik-b}, \eqref{cond-1}, and $\tilde{u}^1_0, \tilde{u}^2_0$
satisfy \eqref{cond-2}, then the corresponding solutions $(u^1, v^1)$ and $(u^2, v^2)$ of  \eqref{eq-1} satisfy
\begin{equation*}
  \|u^1-u^2\|_{\cA_\ell^{p}(\Omega_T)}+\|v^1-v^2\|_{ \cD_0^{p-1}(\Omega_T)}\le C\|\frac{\partial }{\partial y}\left(\frac{u^1_0-u_0^2}{\partial_y u_0^s}\right)\|_{\cA_\ell^p(\RR^2_+)},
\end{equation*}
for all $p\le k-1$, where the constant $C>0$ depends only on $T$ and the upper bounds of the norms of $u^1_0, u_0^2$.
\end{theo}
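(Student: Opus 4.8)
The plan is to exploit the monotonicity condition \eqref{oleinik-b}, which persists for a short time along the flow, to introduce the good unknown that cancels the worst term in the equation. The central obstacle in a direct energy approach to \eqref{eq-1} is the loss of one tangential derivative: differentiating the equation in $x$ produces the term $v_x u_y$, and $v_x = -\int_0^y u_{xx}\,dz$ cannot be controlled by $u$ in the same Sobolev norm. Oleinik's Crocco transformation removes this by changing variables; instead I would follow the ``good unknown'' idea suggested by the statement \eqref{space}, namely work with $w:=\partial_y\big((u-u^s)/\partial_y u^s\big)$ (or the closely related $\partial_y u/\partial_y u^s$). Applying $\partial_y$ to the equation divided by $\partial_y u^s$ is designed precisely so that the dangerous term $v_x u_y$ is converted into a lower-order term after an integration by parts, since $v_x$ pairs with a $y$-antiderivative structure. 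The first step of the proof is therefore to derive the evolution equation for $w$, check that it is a (degenerate) parabolic equation in $w$ with coefficients controlled by $u-u^s$ and $u^s$, and verify the boundary conditions — here the hypothesis $\partial_y^{2j}u_0^s(0)=0$ in \eqref{cond-1} and the compatibility conditions are used to guarantee that $w$ and its tangential derivatives vanish (or have the right behaviour) at $y=0$.

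The second step is the linear theory: for the linearized Prandtl equation around a background state whose tangential velocity is monotonic in $y$, prove well-posedness in the weighted spaces $\cA^k_\ell$, $\cD^k_\ell$. One performs weighted $L^2$ energy estimates on the $w$-equation and its $\partial_{(t,x)}^{k_1}\partial_y^{k_2}$-derivatives: the parabolic term $-w_{yy}$ gives a good $\|w_y\|^2$ contribution, the transport terms are handled by integration by parts, the weight $\langle y\rangle^\ell$ with $\ell>\tfrac12$ is needed so the boundary behaviour at $y=+\infty$ (where $u\to 1$) is integrable, and the commutator terms are estimated using $\|\partial^2_y u^s_0/\partial_y u^s_0\|_{H^{2k+7}}$ from \eqref{cond-1}. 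This yields a closed a priori estimate
\begin{equation}\label{lin-est}
\|w\|_{\cA^k_\ell(\Omega_T)}+\|w\|_{\cD^k_\ell(\Omega_T)}\le C\Big(\|w|_{t=0}\|_{\cA^k_\ell(\RR^2_+)}+\|F\|_{\cA^k_\ell(\Omega_T)}\Big),
\end{equation}
with $C$ depending on the norms of the background state, and existence follows by a standard regularization/Galerkin or vanishing-viscosity argument on the regularized (uniformly parabolic) problem followed by passage to the limit.

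The third step is the nonlinear iteration. Because the change of unknown trades the derivative loss for a loss in the \emph{regularity of the background}, a naive Picard iteration does not close: each linearization costs a fixed number of derivatives relative to the coefficients. This is exactly the situation for which Nash–Moser–Hörmander was designed, which is why the hypotheses are stated with the large fixed losses ($2k+9$, $2k+7$, order $k+4$ compatibility). I would set up the iteration scheme $u^{n+1}=u^n+\delta u^n$ where $\delta u^n$ solves the linearized problem \eqref{lin-est} around $u^n$ with a smoothing operator $S_{\theta_n}$ inserted, choose the standard dyadic parameters $\theta_n\nearrow\infty$, and verify the tame estimates: the linear solution operator satisfies \eqref{lin-est} with the loss quantified in terms of high norms of the coefficient, and the quadratic remainder of the nonlinearity (coming from $u u_x+v u_y$ and from expanding the coefficient $1/\partial_y u^s$ of the good unknown) is tame of the expected type. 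The convergence of the scheme then follows from the Hörmander iteration lemma once $\epsilon$ in \eqref{cond-2} is small enough and $T$ small enough; the monotonicity $\partial_y u>0$ is preserved on $[0,T]$ because $\|\partial_y u-\partial_y u^s\|_{L^\infty}$ stays small (controlled through $\cA^k_\ell$ with $k\ge 5$ by Sobolev embedding). Finally, part (2) — uniqueness and the stability estimate in $\cA^p_\ell$, $p\le k-1$ — is obtained by writing the equation for the difference $w^1-w^2$ of the good unknowns of two solutions, which is again a linear degenerate parabolic equation with coefficients built from $u^1,u^2$, and applying the linear estimate \eqref{lin-est} at the level $p$ (the drop from $k$ to $k-1$ and then to $p\le k-1$ accounts for the derivatives spent in forming the difference equation and in the product estimates). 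The main obstacle throughout is step two: making the weighted energy estimate genuinely close, i.e.\ showing that after the good-unknown substitution every commutator and every boundary term is controlled by the same norm on the right-hand side of \eqref{lin-est}, with constants depending only on the allowed norms of the background — this is where the precise powers of $\langle y\rangle$, the condition $\ell>\tfrac12$, and the vanishing conditions at $y=0$ all have to conspire correctly.
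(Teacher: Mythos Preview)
Your proposal is correct and follows essentially the same route as the paper: the good-unknown substitution to kill the $v\,\partial_y(\text{background})$ term, weighted tame energy estimates for the resulting integro-differential parabolic equation, a Nash--Moser--H\"ormander iteration to absorb the fixed derivative loss, and the stability/uniqueness argument by applying the same transformation to the difference of two solutions. One refinement worth flagging: in the linearized problem around a general monotonic state $(\tilde u,\tilde v)$ the denominator in the good unknown must be $\partial_y\tilde u$ (in the iteration this is $\partial_y u^n_{\theta_n}$, the smoothed approximate solution), not $\partial_y u^s$; dividing only by $\partial_y u^s$ leaves an uncancelled term $v\,\partial_y(\tilde u-u^s)/\partial_y u^s$ which still loses a tangential derivative, whereas with $w=\partial_y(u/\partial_y\tilde u)$ the term $v\,\partial_y\tilde u$ becomes exactly $(\partial_y\tilde u)\,v$, and after division and $\partial_y$ one recovers $\partial_y v=-\partial_x u$, closing the estimate.
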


\begin{rema}
Note that the solutions obtained in the above theorem are
less regular than  the initial data mainly due to the  degeneracy  with respect to the tangential variable $x$
of the Prandtl equation.
\end{rema}

\begin{rema}
(1) It is not difficulty to see from the proof of Theorem \ref{theo1} that the above main result holds
also for the problem \eqref{eq-1} defined in the torus $\mathbb{T}^1$ for the $x-$variable.

(2) It can be seen by using our approach that the above well-posedness result can be generalized to
the nonlinear problem \eqref{prandtl} with a non-trivial Euler outflow. For example, for a smooth
positive tangential velocity $U(t,x)$ of the outflow, if the monotonic initial data $u_0(x,y)$
converges to $U(0,x)$ exponentially fast as $y\to +\infty$, and there is $\alpha>0$ such that
$u_0(x,y)-U(0,x)(1-e^{-\alpha y})$ is small in some weighted Sobolev spaces, then a similar local well-posedness
result holds for the nonlinear Prandtl equation \eqref{prandtl}, with the role of $u_0^s$ being replaced
by $U(0,x)(1-e^{-\alpha y})$.
\end{rema}

The rest of the paper is organized as follows. In  Section \ref{s2}, we will introduce some weighted non-isotropic
Sobolev spaces which will be used later, and give the properties
of the monotonic shear flow produced by the initial data $(u_0^s(y), 0)$. Then, in  Section \ref{s3}, we will study the
well-posedness of the linearized problem of the Prandtl equation (\ref{eq-1}) in the Sobolev spaces by a
direct energy method, when the background tangential velocity is monotonic in the normal variable. Again,
note that without using the Crocco transformation, the approach introduced here is completely new and will
have further applications. The proof of
Theorem \ref{theo1} will be presented in  Section \ref{section4},  Section \ref{section5} and  Section
\ref{section6}. As we mentioned earlier, from the energy
estimates obtained in  Section \ref{s3} for the linearized Prandtl equation, there is a loss of regularity of solutions
with respect to the background state and the initial data. For this, we apply the Nash-Moser-H\"ormander
iteration scheme to study the nonlinear Prandtl equation. In  Section \ref{section4}, we will first construct the iteration
scheme for the  problem \eqref{eq-1}, then in  Section \ref{section5} we will prove the convergence of this iteration scheme
by a series of estimates and then conclude the existence of solutions to the Prandtl equation. The uniqueness
and stability of the solution will be proved in  Section \ref{section6}. Finally,  Section \ref{section7} is devoted to the proof of several
technical estimates used in  Section \ref{section5}.

\bigskip
\section{Preliminary}\label{s2}
\setcounter{equation}{0}

As a preparation, in this section we introduce some functions spaces which will be used later. And the properties
of a monotonic shear flow will also be given.

\subsection{Weighted non-isotropic Sobolev spaces}\label{s2.1}

Since the Prandtl equation given in \eqref{eq-1} is a degenerate parabolic equation coupled with
the divergence-free condition, it is natural to work in some weighted anisotropic Sobolev spaces.

In addition to the spaces $\cA_\ell^{k}$ and $ \cD_\ell^{k}$ introduced already in Section \ref{s1}, we also need
the following function spaces.

Denote by $\partial^k_{\mathcal{T}}$ the
summation of tangential derivatives $\partial^\beta_{\mathcal{T}}=
\partial_t^{\beta_0}\partial_x^{\beta_1}$ for all
$\beta=(\beta_0, \beta_1)\in\NN^2$ with $|\beta|\leq k$. Recall  $\langle y\rangle=(1+|y|^2)^{1/2}$.
For any given $k, k_1, k_2\in\NN, \lambda\ge 0, \ell\in\RR$ and $0<T<+\infty$, we introduce
$$
\|f\|_{\mathcal{B}^{k_1, k_2}_{\lambda, \ell}}=
\Big(\sum_{0\le m\leq k_1, 0\leq q\leq k_2}\|e^{-\lambda t}
\langle y\rangle^\ell\partial^m_{\mathcal{T}}\partial^q_y
f\|^2_{L^2([0, T]\times\RR^2_+)}\Big)^{1/2}\, ,
$$
$$
\|f\|_{\tilde{\mathcal{B}}^{k_1, k_2}_{\lambda, \ell}}=
\Big(\sum_{0\le m\leq k_1, 0\leq q\leq k_2}\|e^{-\lambda t}
\langle y\rangle^\ell\partial^m_{\mathcal{T}}\partial^q_y f
\|^2_{L^\infty([0, T]; L^2(\RR^2_+))}\Big)^{1/2}\,,
$$
and $$
\|u\|_{{\mathcal C}_{\ell}^k}=
\sum\limits_{k_1+[\frac{k_2+1}{2}]\le k}\|\langle y\rangle^\ell\partial_{\mathcal T}^{k_1}
\partial_y^{k_2}u\|_{L^2_{y}(L^\infty_{t,x})}.$$
For the space $
\mathcal{A}^{k}_{\ell}$ introduced in  Section \ref{s1}, obviously, we have
\begin{equation*}
\mathcal{A}^{k}_{\ell}=\bigcap\limits_{j=0}^k
\mathcal{B}^{k-j, 2j}_{0,\ell}.
\end{equation*}

As mentioned in  Section \ref{s1}, when the function is independent of $t$ (or $x$) variable, we use
the same notations for the non-isotropic norms defined above under the convention
that we do not take integration or supremum with respect to this variable.
Note that the parameter $\lambda$ is associated to the variable $t$ and the
 parameter $\ell$ to  the
variable $y$.

The homogeneous norms $\|\,\cdot\,\|_{\dot{{\mathcal A}}_{\ell}^k},
\|\,\cdot\,\|_{\dot{{\mathcal C}}_{\ell}^k}, \|\,\cdot\,\|_{\dot{{\mathcal D}}_{\ell}^k}$
correspond to the summation $1\le k_1+[\frac{k_2+1}{2}]\le k$ in the definitions.

For $1\leq p\leq +\infty$, we will also use $\|f\|_{L^p_\ell(\RR^2_+)}
=\|\langle y\rangle^\ell f\|_{L^{^p}(\RR^2_+)}$.

By classical theory, it is easy to get the following Sobolev type embeddings
\begin{equation}\label{sob-C}
\|u\|_{{\mathcal C}_{\ell}^k}\leq C_s\|u\|_{{\mathcal A}_{\ell}^{k+2}},\qquad
\|u\|_{{\mathcal D}_{\ell}^k}\leq C_s\|u\|_{{\mathcal A}_{\ell}^{k+1}}\,.
\end{equation}
Moreover, for any $\ell\ge 0$ and $k\ge 2$, the space ${\mathcal A}_{\ell}^k$ is continuously embedded
into $C_b^{k-2}$, the space of $(k-2)-$th order continuously differentiable functions with all
derivatives being bounded.

In the following, we will use some Morse-type inequalities
for the above four function spaces, which are consequences
of interpolation inequality and the fact that the space
$L^2\cap L^\infty$ is an algebra.

\begin{lemm}\label{lemm2.1}
For any  proper functions $f$ and $g$, we have
$$
\|f\,g\|_{{\mathcal A}_{\ell}^k}\le M_k \{\|f\|_{{\mathcal A}_{\ell}^k}
\|g\|_{L^\infty}+\|f\|_{L^\infty}\,\|g\|_{\dot{\mathcal A}_{\ell}^k}\}\,,
$$
and
$$
\|f\,g\|_{{\mathcal A}_{\ell}^k}\le M_k \{\|f\|_{{\mathcal C}_{\ell}^k}\|g
\|_{{\mathcal D}_{0}^0}+\|f\|_{{\mathcal C}_{\ell}^0}
\|g\|_{\dot{\mathcal D}_{0}^k}\}\,.
$$
Similar inequalities hold for the norms  $\|\,\cdot\,\|_{{\mathcal B}_{\ell}^{k_1, k_2}},
\|\,\cdot\,\|_{{\mathcal C}_{\ell}^{k}}$ and $\|\,\cdot\,\|_{{\mathcal D}_{\ell}^{k}} $.
Here, $M_k>0$ is  a constant depending only on $k$.
\end{lemm}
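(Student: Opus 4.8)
The plan is to prove both inequalities by the classical Moser (Kato--Ponce) scheme, adapted to the weighted anisotropic scale of Section \ref{s2.1}: expand the top derivatives of $fg$ by the Leibniz rule, estimate each resulting product of lower-order factors by distributing the regularity through (anisotropic, weighted) Gagliardo--Nirenberg interpolation inequalities, and reassemble everything with Young's inequality. The only genuinely ``hard'' analytic inputs are the elementary algebra property $\|fg\|_{L^2}\le\|f\|_{L^\infty}\|g\|_{L^2}$ together with its mixed-norm counterpart, and the interpolation inequalities themselves, which are standard for these spaces. I describe the argument for $\|\,\cdot\,\|_{\cA^k_\ell}$; the others are obtained by the same recipe.

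First I would fix $(k_1,k_2)$ with $k_1+[\frac{k_2+1}{2}]\le k$ and expand $\langle y\rangle^\ell\partial^{k_1}_{(t,x)}\partial^{k_2}_y(fg)$ by the Leibniz rule into a finite sum of terms $c_{a,b}\,\langle y\rangle^\ell\,\partial^af\,\partial^bg$ with $a_1+b_1=k_1$ and $a_2+b_2=k_2$, placing the full weight on whichever factor carries more derivatives; this is harmless since $\langle y\rangle^\ell(fg)=(\langle y\rangle^\ell f)g=f(\langle y\rangle^\ell g)$. It then suffices to bound each such term in $L^2([0,T]\times\RR^2_+)$ by $C(\|f\|_{\cA^k_\ell}\|g\|_{L^\infty}+\|f\|_{L^\infty}\|g\|_{\dot\cA^k_\ell})$ and sum over the finitely many $(a,b)$ and $(k_1,k_2)$. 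For the second inequality the same Leibniz expansion is used, but the $L^2$ norm of a product is split by the mixed-norm Hölder inequality $\|FG\|_{L^2([0,T]\times\RR^2_+)}\le\|F\|_{L^2_y(L^\infty_{t,x})}\|G\|_{L^\infty_y(L^2_{t,x})}$ --- precisely the pairing that turns one factor into a $\cC$-norm and the other into a $\cD$-norm --- again with the weight assigned to the $f$-factor.

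For a fixed splitting I would put the factor carrying fewer derivatives into an $L^\infty$ (resp.\ $L^\infty_{t,x}$) norm and the other into a weighted $L^2$ (resp.\ $L^2_y$) norm, or, when both factors carry derivatives, into a pair of dual Lebesgue norms with exponents chosen so that Hölder closes up. Each factor is then estimated by the weighted anisotropic Gagliardo--Nirenberg inequality, controlling $\|\langle y\rangle^\ell\partial^af\|$ by an interpolation power of $\|f\|_{L^\infty}$ and $\|f\|_{\dot\cA^k_\ell}$ (and likewise for $g$), the key point being that the constraint $k_1+[\frac{k_2+1}{2}]\le k$ forces the two parabolic orders involved never to exceed $k$ in total, so the interpolation exponents can be chosen complementary; the room supplied by the Sobolev embeddings of Section \ref{s2.1} (in particular $\cA^k_\ell\hookrightarrow C_b^{k-2}$) is what makes the ``corner'' terms, where one factor receives all the normal and the other all the tangential derivatives, fit. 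Young's inequality $x^\lambda y^{1-\lambda}\le\lambda x+(1-\lambda)y$ then converts each bilinear estimate into the required tame form $C(\|f\|_{\dot\cA^k_\ell}\|g\|_{L^\infty}+\|f\|_{L^\infty}\|g\|_{\dot\cA^k_\ell})$, and adding the trivial bound $\|\langle y\rangle^\ell fg\|_{L^2}\le\|g\|_{L^\infty}\|\langle y\rangle^\ell f\|_{L^2}$ for the lowest-order term finishes the first inequality; the analogous statements for $\cB^{k_1,k_2}_\ell$, $\cC^k_\ell$ and $\cD^k_\ell$ follow identically. A routine density argument extends the inequalities from Schwartz-type functions (for which the integrations by parts in the interpolation step are legitimate) to general $f,g$ with finite right-hand sides.

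The step I expect to be the main obstacle is the anisotropic bookkeeping inside the interpolation: one has to split $\partial^{k_2}_y$ between the two factors compatibly with the half-integer weighting $[\frac{k_2+1}{2}]$ of normal derivatives, verify that each half is still dominated by $\dot\cA^k_\ell$ (and not merely by $\dot\cA^{k+1}_\ell$), and check that the Lebesgue exponents dictated by the corresponding Gagliardo--Nirenberg inequalities can always be balanced so that the product lands in $L^2$ (resp.\ in $L^2([0,T]\times\RR^2_+)$ for the $\cC$--$\cD$ pairing). This is exactly where the precise form of the index $k_1+[\frac{k_2+1}{2}]$ and the built-in half-derivative of slack in the definition of $\cA^k_\ell$ enter; once the interpolation inequalities are correctly set up, everything else is routine Hölder and Young.
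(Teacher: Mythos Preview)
The paper does not actually prove this lemma: after the statement it simply says ``This result can be obtained in a way similar to that given in \cite{metivier}'' and moves on. Your proposal is precisely the classical Moser scheme (Leibniz expansion, Gagliardo--Nirenberg interpolation to distribute derivatives between the factors, H\"older/Young to recombine), which is exactly the method one finds in the cited reference for tame product estimates in anisotropic weighted spaces; so in substance your approach coincides with what the paper defers to.

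One small caution: you invoke the embedding $\cA^k_\ell\hookrightarrow C_b^{k-2}$ to handle the ``corner'' terms, but that embedding costs two derivatives and is not what drives the sharp estimate. The Moser inequality in the form stated is a pure interpolation fact and should close with Gagliardo--Nirenberg alone, using the parabolic counting $k_1+[\frac{k_2+1}{2}]$ to set up the right Lebesgue exponents; if you find yourself needing the $C_b^{k-2}$ embedding, the exponents in the interpolation step have been chosen sub-optimally. Apart from that, your identification of the anisotropic bookkeeping with the half-integer weight $[\frac{k_2+1}{2}]$ as the only nontrivial point is accurate.
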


This result can be obtained in a way similar to that given in \cite{metivier}.

\subsection{Properties of a monotonic shear flow} \label{s2.2-1}
Let $u^s (t,y)$ be the solution of the following initial boundary value problem
\begin{equation}\label{heat-1}
\left\{\begin{array}{l}
\partial_t u^s = \partial^2_y u^s ,\quad t>0,\quad y>0, \\
{u^s}|_{y=0} = 0, \quad \lim\limits_{y\to+\infty} u^s(t,y) = 1, \quad t>0, \\
{u^s}|_{t=0} =u_0^s (y),\quad y>0.
\end{array}\right..
\end{equation}

Note that $(u^s(t,y),0)$ is a shear flow for the Prandtl equation \eqref{eq-1}.

\begin{prop} For any fixed  $k\ge 2$, assume that the initial data $u_0^s(y)$ satisfies the monotonicity condition
\begin{equation*}
 \partial_y u_0^s (y)>0,\quad \forall y\geq 0\,,
\end{equation*}
and the compatibility conditions for \eqref{heat-1} up to order $k$, i.e.
\begin{equation*}
\lim_{y\rightarrow+\infty}u_0^s(y)= 1,\
\quad\partial_y^{2j}u_0^s(0)= 0,\quad \forall ~0\le j\le k.
\end{equation*}
Moreover, assume that
\begin{equation*}
\|u_0^s-1\|_{L^2({\RR}_+)}+\|u_0^s\|_{L^\infty({\RR}_+)}+\|u_0^s\|_{\dot{\cC}^{k}_\ell}+\|\frac{\partial^2_y u^s_0}{\partial_y u^s_0}\|_{{\cC}^{k-1}_{0}}\le C\,,
\end{equation*}
for a constant $C>0$, and a fixed $\ell>0$. Then, we have
\begin{equation}\label{mono}
\partial_y u^s (t, y)>0,\quad \forall~ t, y\geq 0,\end{equation}
for any fixed $T>0$, there is a constant $C(T)$ such that
\begin{equation}\label{result-u-s}
\|u^s\|_{L^\infty([0,T]\times {\RR}_+)}+\|u^s\|_{\dot{\cC}^{k}_\ell}+
\|\frac{\partial^2_y u^s}{\partial_y u^s}\|_{{\cC}^{k-1}_{0}}\le C(T).
\end{equation}
Moreover,  for a fixed $0<R_0<T$, there is a constant $C(T, R_0)>0$ such that
\begin{equation}\label{cond-2}
\max_{0\le {\bar y}\le R_0}
\|\frac{\partial_y u^s (t,y+\bar{y})}{\partial_y u^s (t,y)}\|_{{{\cC}^{k-1}_{0}}([0,T]\times \RR_y^+)}\le C(T, R_0),
\end{equation}
and
\begin{equation}\label{cond-3}
\max_{0\le {\bar t}\le R_0}
\|\frac{\partial_y u^s (t+\bar t,y)}{\partial_y u^s (t,y)}\|_{{{\cC}^{k-2}_{0}}([0,T-R_0]\times \RR_y^+)}\le C(T, R_0).
\end{equation}
\end{prop}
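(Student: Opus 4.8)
The plan is to combine the parabolic maximum principle with a logarithmic change of unknown that removes the degeneracy caused by $\partial_y u^s$ vanishing as $y\to+\infty$. Throughout set $w:=\partial_y u^s$, which solves the heat equation $\partial_t w=\partial_y^2 w$. For \eqref{mono}: the compatibility condition $\partial_y^2 u_0^s(0)=0$ propagates to $\partial_y^2 u^s(t,0)=0$ for all $t\ge 0$, i.e. $\partial_y w|_{y=0}=0$; together with $w\to 0$ as $y\to+\infty$ and $w|_{t=0}=\partial_y u_0^s>0$, the weak maximum principle gives $w\ge 0$, and since $w\not\equiv 0$ the strong maximum principle gives $w>0$ in the interior for $t>0$. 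If $w$ vanished at some boundary point $(t_0,0)$ it would attain a minimum there, and the Hopf lemma would force $\partial_y w(t_0,0)>0$, contradicting the Neumann condition; hence $w>0$ for all $t,y\ge 0$, which is \eqref{mono}. Since $u_0^s$ is monotone from $u_0^s(0)=0$ to $1$ we have $0\le u_0^s\le 1$, so the maximum principle also gives $0\le u^s\le 1$, the $L^\infty$ bound in \eqref{result-u-s}.

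For the weighted Sobolev part of \eqref{result-u-s}, observe that $\partial_t u^s=\partial_y^2 u^s$ lets one trade each time derivative for two $y$-derivatives, so it suffices to propagate weighted $L^2_y(L^\infty_t)$ bounds on $\partial_y^m w$ for $m\le 2k-2$, whose initial values are controlled by $\|u_0^s\|_{\dot{\cC}^k_\ell}$. These follow from standard weighted parabolic energy estimates: testing $\partial_t(\partial_y^m w)=\partial_y^2(\partial_y^m w)$ against $\langle y\rangle^{2\ell}\partial_y^m w$, the commutators of $\langle y\rangle^{\ell}$ with $\partial_y^2$ produce only lower-order weighted terms, and the boundary terms at $y=0$ vanish because the compatibility conditions propagate. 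Gronwall on $[0,T]$ closes the estimates.

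The main point is the bound on $a:=\dfrac{\partial_y^2 u^s}{\partial_y u^s}=\partial_y\log w$, which I obtain by observing that $a$ solves a viscous Burgers equation: since $w>0$ solves the heat equation,
\[
\partial_t a=\partial_y\!\left(\frac{\partial_y^2 w}{w}\right)=\partial_y\big(\partial_y^2\log w+(\partial_y\log w)^2\big)=\partial_y^2 a+2a\,\partial_y a,
\]
with $a|_{y=0}=0$ (from $\partial_y^2 u^s(t,0)=0$ and $\partial_y u^s(t,0)>0$) and $a|_{t=0}=\dfrac{\partial_y^2 u_0^s}{\partial_y u_0^s}$ controlled in $\cC^{k-1}_0$ by hypothesis. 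This reformulation is crucial: it no longer involves division by $w$, so the decay of $\partial_y u^s$ is irrelevant. The energy estimates are then benign: testing against $a$, the nonlinear term $\int_0^\infty 2a^2\partial_y a\,dy=\tfrac23[a^3]_0^\infty=0$ and the boundary term vanishes since $a|_{y=0}=0$, so $\|a(t)\|_{L^2}$ is nonincreasing; for $\partial_y^m a$ one gets $\tfrac{d}{dt}\|\partial_y^m a\|^2_{L^2}\le C\|\partial_y a\|_{L^\infty}\|\partial_y^m a\|^2_{L^2}+(\text{lower-order terms, handled by Lemma \ref{lemm2.1} and Sobolev embedding})$, the boundary terms vanishing again because the equation forces $\partial_y^{2j}a(t,0)=0$. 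Gronwall on $[0,T]$ yields the $\cC^{k-1}_0$ bound on $a$ in \eqref{result-u-s}.

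For the ratio estimates \eqref{cond-2} and \eqref{cond-3} I write them as exponentials of integrals of $a$. For the spatial shift, $\log\dfrac{\partial_y u^s(t,y+\bar y)}{\partial_y u^s(t,y)}=\int_0^{\bar y}a(t,y+s)\,ds$; since shifting into the domain does not increase the $L^2_y$-based norms, the $\cC^{k-1}_0$ norm of this logarithm is at most $R_0\|a\|_{\cC^{k-1}_0}$, and composing with $\exp$ (which preserves boundedness of functions of bounded $\cC^{k-1}_0$ norm, by the Morse-type estimates of Lemma \ref{lemm2.1}) gives \eqref{cond-2}. For the time shift, $\partial_t\log w=\partial_y a+a^2$, so $\log\dfrac{\partial_y u^s(t+\bar t,y)}{\partial_y u^s(t,y)}=\int_0^{\bar t}(\partial_y a+a^2)(t+\tau,y)\,d\tau$, which lies in $\cC^{k-2}_0([0,T-R_0]\times\RR_y^+)$ with norm controlled by $R_0\,C(T)$ — note the loss of one $y$-derivative and the restriction to $t+\tau\le T$; moreover its $y$-derivative equals $a(t+\bar t,\cdot)-a(t,\cdot)$, which retains the full $\cC^{k-1}_0$ regularity of $a$, so composing with $\exp$ is comfortable and gives \eqref{cond-3}. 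I expect the crux to be precisely the control of $a$: a direct estimate would require a positive lower bound on $\partial_y u^s$, which fails since $\partial_y u^s\to 0$ at infinity, and the Burgers reformulation is exactly what circumvents this; the remaining work is careful bookkeeping of the anisotropic weighted norms (and of the endpoint derivative counts), again using Lemma \ref{lemm2.1}.
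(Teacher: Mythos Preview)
Your proposal is correct and shares the paper's key idea: both recognize that $\alpha=a=\partial_y^2 u^s/\partial_y u^s$ satisfies the viscous Burgers equation $\partial_t\alpha=\partial_y^2\alpha+2\alpha\partial_y\alpha$ with $\alpha|_{y=0}=0$, which removes the degeneracy of dividing by $\partial_y u^s$ and yields the $\cC^{k-1}_0$ bound by standard energy arguments. The differences are stylistic rather than substantive. For the monotonicity \eqref{mono} the paper writes down the odd-reflection heat-kernel formula and reads off $\partial_y u^s>0$ directly, whereas you invoke the maximum principle and Hopf lemma for $w=\partial_y u^s$ with the Neumann condition $\partial_y w|_{y=0}=0$; both are clean, and the explicit formula has the side benefit of giving the higher $\partial_y^p u^s$ representations for free. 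For the ratio bounds \eqref{cond-2}--\eqref{cond-3} the paper writes $\frac{\partial_y u^s(t,y+\bar y)}{\partial_y u^s(t,y)}=1+\int_0^{\bar y}\alpha(t,y+z)\,\frac{\partial_y u^s(t,y+z)}{\partial_y u^s(t,y)}\,dz$ and closes by Gronwall in $\bar y$, while you solve this ODE explicitly as $\exp\!\big(\int_0^{\bar y} a(t,y+s)\,ds\big)$ and then control the composition with $\exp$; these are two renderings of the same computation. Your logarithmic formulation for the time shift via $\partial_t\log w=\partial_y a+a^2$ is a nice touch that makes the loss of one derivative in \eqref{cond-3} transparent.
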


\begin{proof}
Obviously,  the solution of \eqref{heat-1} can be written as
\begin{align*}
u^s (t,y) &=\frac{1}{2\sqrt {\pi t}} \int^{+\infty}_{0} \Big(
e^{-\frac{(y-\tilde{y})^2}{4 t}}-e^{-\frac{(y+\tilde{y})^2}
{4 t}}\Big)  u_0^s (\tilde{y}) d\tilde{y}\\
&=\frac{1}{\sqrt {\pi}} \Big(\int^{+\infty}_{- {\frac{y}{2\sqrt t}}}
 e^{-\xi^2} u_0^s (2\sqrt t \xi +y) d\xi - \int^{+\infty}_{
  {\frac{y}{2\sqrt t}}}  e^{-\xi^2}u_0^s (2\sqrt t \xi -y)d\xi \Big)\, ,
\end{align*}
which gives
\begin{align*}
\partial_t u^s(t, y) =& \frac{1}{\sqrt {\pi t}} \Big(
\int^{+\infty}_{- {\frac{y}{2\sqrt t}}} {\xi}\, e^{-\xi^2}
(\partial_y u_0^s) (2\sqrt t \xi +y) d\xi  \\
&\qquad- \int^{+\infty}_{ {\frac{y}{2\sqrt t}}}{\xi}\,
e^{-\xi^2}(\partial_y u_0^s) (2\sqrt t \xi -y)d\xi \Big).
\end{align*}
By using $\partial_y^{2j}u_0^s(0)=0$ for $0\le j\le k$, it follows
\begin{align*}
\partial^p_y u^s(t, y) =& \frac{1}{\sqrt \pi} \Big( \int^{+\infty}_{-
{\frac{y}{2\sqrt t}}}  e^{-\xi^2} (\partial^p_yu_0^s) (2\sqrt t \xi+y) d\xi\\
 &\quad+ (-1)^{p+1}\int^{+\infty}_{ {\frac{y}{2\sqrt t}}}
 e^{-\xi^2}(\partial^p_yu_0^s) (2\sqrt t \xi -y)d\xi \Big),
\end{align*}
for all $1\le p\le 2k$, from which one immediately deduces the monotonicity property \eqref{mono}.

To estimate the last term given in \eqref{result-u-s},
denote
$$
\alpha(t, y)=\frac{\partial^2_y u^s }{\partial_y u^s}(t, y).
$$
Then,  from \eqref{heat-1}, we know that $\alpha(t,y)$ satisfies the following initial boundary value problem for the Burgers equation,
\begin{equation}\label{heat-2}
\left\{\begin{array}{l}
\partial_t \alpha = \partial^2_y \alpha+2\alpha\partial_y \alpha ,\\
{\alpha}|_{y=0} = 0, \quad t>0, \\
{\alpha}|_{t=0} =\alpha^0 (y) :=\frac{\partial^2_y u_0^s }
{\partial_y u_0^s}(y),\quad y>0.
\end{array}\right.
\end{equation}
It is easy to verify that the compatibility conditions of \eqref{heat-2} hold up to order $k-1$,
the estimates on $\|\alpha\|_{\cC_0^{k-1}}$ can be obtained by
standard energy method after an odd extension of the initial data to the whole $\RR$.

To prove \eqref{cond-2}, note that
\begin{equation}\label{indentity}
\frac{\partial_y u^s (t, y+\bar y)}{\partial_y u^s (t, y)}  = \frac{\partial_y u^s (t,y+\bar y) -
 \partial_y u^s(t, y)}{\partial_y u^s(t,y)}  +1,\end{equation}
and
$$\partial_y u^s (t,y+\bar y) -
 \partial_y u^s(t, y)= \int^{\bar y}_0 \partial^2_y u^s (t,y+z)d z.$$
Hence,  from \eqref{indentity}, we have by using \eqref{result-u-s} that
\begin{equation}\label{indentity-1}
\|\frac{\partial_y u^s (t, y+\bar y)}{\partial_y u^s (t, y)}\|_{{\cC}^{k-1}_{0}([0,T]\times \RR^+_y)}\le
1+ C\int_0^{\bar y}\|\frac{\partial_y u^s (t, y+z)}{\partial_y u^s (t, y)}\|_{{\cC}^{k-1}_{0}([0,T]\times \RR^+_y)}
dz.\end{equation}

By applying the Gronwall inequality to \eqref{indentity-1}, it follows
$$\|\frac{\partial_y u^s (t, y+\bar y)}{\partial_y u^s (t, y)}\|_{{\cC}^{k-1}_{0}([0,T]\times \RR^+_y)}\le
e^{C\bar y}.$$

The estimate \eqref{cond-3} can be proved similarly by noting that $\partial_t u^s = \partial^2_y u^s$.

\end{proof}

\section{Well-posedness of the linearized Prandtl equation}\label{s3}
\setcounter{equation}{0}

In this section, we study the well-posedness of a linearized problem of the Prandtl
equation (\ref{eq-1}) in the Sobolev spaces by the energy method, when the background
tangential velocity is monotonic in the normal variable. Again, the main novelty
here is that unlike most of the previous works, the Crocco transformation will not be used.

In the estimates on the solutions to the linearized problem, we will see that
there is a loss of regularity  with respect to both the source term and the background
state.  And this inspires us to use the Nash-Moser-H\"ormander iteration scheme to study
the nonlinear Prandtl equation in next section.

Let $(\tilde u, \tilde v)$  be a smooth background state satisfying $$
\partial_y\tilde{u}(t, x, y)>0,\qquad \partial_x \tilde{u}+\partial_y \tilde{v}=0,
$$
and other conditions that will be specified later.
 Consider the following linearized problem of \eqref{eq-1} around  $(\tilde{u}, \tilde{v})$,
\begin{equation}\label{eq2-1}
\left\{\begin{array}{l} \partial_t u + \tilde{u} \partial_x u+\tilde{v}\partial_y u
+u \partial_x \tilde{u}+
v \partial_y \tilde{u}
-\partial^2_y u =f, \\
\partial_x u+\partial_y v=0,\\
u|_{y=0} =v|_{y=0}=0, \quad \lim\limits_{y\to+\infty} u(t,x,y) =0, \\
u|_{t\le 0} =0\,.
\end{array}\right.
\end{equation}

Unlike using the Crocco transformation, our main idea is to rewrite the problem of \eqref{eq2-1} into  a degenerate
parabolic equation with an integral term without changing the independent variables, for which we can perform the
energy estimates directly. For this purpose, we introduce the following change of unknown function:
\begin{equation*}
w(t,x,y) = \left( \frac{u}{\partial_y \tilde{u}} \right)_y(t, x, y),
\quad \mbox{that is,}\quad
u(t, x, y)=(\partial_y \tilde{u}) \int^y_0 w(t,x,\tilde{y})d \tilde{y}.
\end{equation*}

By a direct calculation, we get that for classical solutions, the  problem \eqref{eq2-1} is
equivalent to
\begin{equation}\label{eq2-2}
\left\{\begin{array}{l}
\partial_t w + \partial_x (\tilde{u} w)+\partial_y(\tilde{v} w)-2
\partial_y (\eta w)
\\
\qquad\qquad
+\partial_y(\zeta\int^y_0 w(t, x, \tilde{y})d\tilde{y})-\partial^2_y w=\partial_y\tilde{f}
,\\
\big(\partial_y w+2\eta w\big) |_{y=0} = - \tilde{f}|_{y=0}  , \\
w|_{t\le 0} =0, \end{array}\right.
\end{equation}
where
$$
\eta=\frac{\partial^2_y \tilde{u}}{\partial_y \tilde{u}},\quad \zeta
=\frac{\big(\partial_t+\tilde{u} \partial_x+
\tilde{v} \partial_y -\partial^2_y\big) \partial_y \tilde{u}}
{\partial_y \tilde{u}},\quad
\tilde{f}=\frac{f}{\partial_y \tilde{u}}.
$$

To simplify the notations in the estimates on solutions to the problem \eqref{eq2-2}, denote
\begin{equation*}
\begin{array}{lll}
\lambda_{k_1, k_2}& =& \|\tilde{u}-u^s\|_{\mathcal{B}^{k_1, k_2}_{0, 0}}
+\|\partial^{k_1}_{\mathcal{T}}\partial^{k_2}_{y}u^s
\|_{L^2_{t,y}}+\|\partial^{k_1}_{\mathcal{T}}\partial^{k_2}_{y}\tilde{v}
\|_{L^\infty_y(L^2_{t,x})}\\
&&+\|\partial_{\mathcal T}^{k_1}\partial_y^{k_2}\overline{\eta}
\|_{L^2_y(L^\infty_{t,x})}
+\|\eta-\overline{\eta}\|_{\mathcal{B}^{k_1, k_2}_{0, 0}}+
\|\zeta\|_{\mathcal{B}^{k_1, k_2}_{0, \ell}},
\end{array}\end{equation*}
and
\begin{equation*}
\lambda_{k}=\sum_{k_1+[\frac{k_2+1}{2}]\leq k}\lambda_{k_1, k_2},
\end{equation*}
where
$$
\overline{\eta}=\frac{\partial^2_y u^s}{\partial_y \tilde{u}}.
$$

The following estimate for the problem \eqref{eq2-2} can be obtained by a direct
energy method.

\begin{theo}\label{estimate-w}
For a given  positive integer $k$, suppose that the compatibility conditions of the problem
\eqref{eq2-2} hold up to order $k$.  Then for any fixed $\ell>1/2$, we have the following estimate
\begin{equation}\label{energy-A}
\|w\|_{\mathcal{A}^k_{\ell}} \le C_1(\lambda_3)
\|\tilde{f}\|_{\mathcal{A}^k_{\ell}}
+C_2(\lambda_3)\lambda_{k}\|\tilde{f}\|_{\mathcal{A}^{3}_{\ell}},
\end{equation}
where $C_1(\lambda_3), C_2(\lambda_3)$ are polynomials of $\lambda_3$ of
order less or equal to $ k$.
\end{theo}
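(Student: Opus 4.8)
The plan is to prove the weighted energy estimate \eqref{energy-A} by a careful induction on $k$, performing direct energy estimates on the reformulated equation \eqref{eq2-2} for $w$ and its tangential/normal derivatives. The starting point is the base case of low-order estimates: multiply the equation for $w$ in \eqref{eq2-2} by $\langle y\rangle^{2\ell} w$ and integrate over $\RR^2_+$. The key structural feature to exploit is that the problematic first-order term $\partial_y(\zeta\int_0^y w\,d\tilde y)$ comes with the weight $\langle y\rangle^\ell$ already absorbed into the definition of $\lambda_{k_1,k_2}$ through $\|\zeta\|_{\mathcal{B}^{k_1,k_2}_{0,\ell}}$; this is precisely why the condition $\ell>1/2$ enters, since one needs $\langle y\rangle^{-\ell}\int_0^y w\,d\tilde y$ to be controlled in $L^2_y$ by $\|w\|_{L^2_y}$ via a Hardy-type inequality (which requires $\ell>1/2$). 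The parabolic term $-\partial_y^2 w$ yields the good term $\|\langle y\rangle^\ell\partial_y w\|_{L^2}^2$ up to commutators with the weight, while the boundary term at $y=0$ generated by integration by parts is handled using the boundary condition $(\partial_y w+2\eta w)|_{y=0}=-\tilde f|_{y=0}$, producing a trace term in $\tilde f$ controlled by $\|\tilde f\|_{\mathcal{A}^k_\ell}$ through the Sobolev embedding in \eqref{sob-C}. The transport terms $\partial_x(\tilde u w)$ and $\partial_y(\tilde v w)$ are treated by integration by parts using the divergence-free condition $\partial_x\tilde u+\partial_y\tilde v=0$, leaving lower-order contributions bounded by $\lambda_3\|w\|_{L^2}^2$.

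For the inductive step, I would apply $\partial_{\mathcal T}^{k_1}\partial_y^{k_2}$ with $k_1+[(k_2+1)/2]\le k$ to \eqref{eq2-2}, energy-estimate the resulting equation as above, and organize the commutator terms by Leibniz's rule. The commutators with the coefficients $\tilde u,\tilde v,\eta,\zeta$ split, via the Morse-type inequalities of Lemma \ref{lemm2.1}, into two kinds: terms where all but few derivatives fall on $w$ (absorbed into $C_1(\lambda_3)\|w\|_{\mathcal A^k_\ell}$-type quantities using the $L^\infty$ norms of the coefficients, which are controlled by $\lambda_3$ via \eqref{sob-C}), and terms where many derivatives fall on a coefficient (bounded by $\lambda_k$ times a low-order norm of $w$). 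The second-order normal derivatives $\partial_y^2$ require special care because applying $\partial_y^2$ costs one unit in the $\mathcal A^k_\ell$-count but can be converted, using the equation itself, into tangential derivatives plus lower-order normal derivatives — this is the standard trick for parabolic anisotropic spaces and is consistent with the exponent bookkeeping built into the definition $\mathcal A^k_\ell=\bigcap_j\mathcal B^{k-j,2j}_{0,\ell}$. One then closes the estimate by absorbing the good parabolic term, summing over all multi-indices, and using that the low-order norm $\|w\|_{\mathcal A^3_\ell}$ on the right is itself bounded, via the base case, by $C_1(\lambda_3)\|\tilde f\|_{\mathcal A^3_\ell}$, which accounts for the structure of the two terms in \eqref{energy-A} and the polynomial dependence on $\lambda_3$ of degree $\le k$ (each inductive step multiplies by at most one more factor of $\lambda_3$).

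The main obstacle, and the genuinely delicate point, is the treatment of the nonlocal term $\partial_y(\zeta\int_0^y w\,d\tilde y)$ under repeated differentiation and under the weight $\langle y\rangle^{2\ell}$. After applying $\partial_{\mathcal T}^{k_1}\partial_y^{k_2}$, the normal derivative $\partial_y$ hitting the integral gives back $\zeta w$ (harmless), but the derivatives hitting the integral $\int_0^y w\,d\tilde y$ itself — when $k_2\ge 1$ — reduce to derivatives of $w$, while the tangential derivatives commute into the integral and are controlled by the Hardy inequality at each order. The subtle bookkeeping is that $\int_0^y\partial_{\mathcal T}^{k_1}w\,d\tilde y$ must be paired with $\langle y\rangle^\ell\zeta$, and one needs $\|\langle y\rangle^{-\ell}\int_0^y\partial_{\mathcal T}^{k_1}w\,d\tilde y\|_{L^2_y}\le C\|\partial_{\mathcal T}^{k_1}w\|_{L^2_y}$ uniformly, again by $\ell>1/2$; one must verify that the resulting term is of the "good" type, i.e. absorbable either into $C_1(\lambda_3)\|w\|_{\mathcal A^k_\ell}$ (when $\zeta$ carries few derivatives) or into $C_2(\lambda_3)\lambda_k\|\tilde f\|_{\mathcal A^3_\ell}$ (when $\zeta$ carries many, using that $\|\zeta\|_{\mathcal B^{k_1,k_2}_{0,\ell}}$ is a summand of $\lambda_k$ and the low-order norm of $w$ is controlled by $\tilde f$). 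Keeping this nonlocal term from destroying the closure of the energy estimate — and matching its weight exactly with the $\ell$-weight on $\zeta$ — is where essentially all the work lies; the rest is standard, if lengthy, anisotropic parabolic energy estimation.
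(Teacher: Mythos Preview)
Your proposal is correct and follows essentially the same route as the paper: a weighted $L^2$ energy estimate on \eqref{eq2-2}, then tangential-derivative estimates with commutators split via Lemma~\ref{lemm2.1} into low-coefficient/high-$w$ and high-coefficient/low-$w$ pieces, and finally recovery of normal derivatives by rewriting $\partial_y^2 w$ through the equation itself and inducting on $k_2$. Two small technical corrections: the paper closes the base estimate not by absorption into the parabolic term or Gr\"onwall but by inserting an exponential time weight $e^{-\lambda t}$ with $\lambda\gtrsim(1+\lambda_{3,0})^2$ (this is the role of the $\mathcal B^{k_1,k_2}_{\lambda,\ell}$ norms), which directly kills the $\lambda_3\|w\|^2$ terms; and the boundary contribution at $y=0$ does not produce a trace term to be estimated separately---because the right-hand side of \eqref{eq2-2} is $\partial_y\tilde f$ rather than $\tilde f$, its boundary term after integration by parts cancels exactly against the one coming from $-\partial_y^2 w-2\partial_y(\eta w)$ via the boundary condition $(\partial_y w+2\eta w)|_{y=0}=-\tilde f|_{y=0}$.
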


\begin{rema}
(1) It is easy to see that the compatibility conditions for the problem
\eqref{eq2-2} up to order $k$ follow immediately from the corresponding conditions
of the problem  \eqref{eq2-1}.

(2) From the estimate \eqref{energy-A}, one can easily deduce the estimates on the
solution $(u,v)$ to the problem \eqref{eq2-1} in some weighted Sobolev spaces.
Hence, from these estimates we can obtain the well-posedness of the linearized
Prandtl equation in the Sobolev spaces.
\end{rema}

The proof of Theorem \ref{estimate-w} is based on the following lemmas.

\begin{lemm}\label{prop3.1} {\bf( $L^2$-estimate)}
Under the assumptions
of Theorem \ref{estimate-w}, for any fixed $T>0$, there is a constant $C(T)>0$ such that
\begin{equation*}
\|w\,\|^2_{\tilde{\mathcal{B}}^{0, 0}_{\lambda, \ell}} +
\lambda \|w\|^2_{\mathcal{B}^{0, 0}_{\lambda, \ell}}+
\|\partial_y w\|^2_{\mathcal{B}^{0, 0}_{\lambda, \ell}}\le
C(T)\|\tilde{f}\|^2_{\mathcal{B}^{0, 0}_{\lambda, \ell}}\,.
\end{equation*}
\end{lemm}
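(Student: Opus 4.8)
The plan is to obtain the $L^2$-estimate by a weighted energy method applied directly to the evolution equation \eqref{eq2-2} for $w$, exploiting the parabolic term $-\partial_y^2 w$ to absorb the lower-order and nonlocal contributions, and the weight $e^{-\lambda t}$ to generate a large coercive term $\lambda\|w\|^2$ that dominates the remaining commutator errors once $\lambda$ is chosen large relative to $C(T)$ and the $\lambda_3$-type bounds on the background state. Concretely, I would multiply the equation by $e^{-2\lambda t}\langle y\rangle^{2\ell} w$ and integrate over $[0,t]\times\RR^2_+$. The term $\partial_t w$ produces $\frac12\frac{d}{dt}\|e^{-\lambda t}\langle y\rangle^\ell w\|_{L^2}^2 + \lambda\|e^{-\lambda t}\langle y\rangle^\ell w\|_{L^2}^2$ after accounting for the time derivative hitting $e^{-2\lambda t}$; since $w|_{t\le 0}=0$, integrating in $t$ gives the $\tilde{\mathcal B}^{0,0}_{\lambda,\ell}$ and $\mathcal B^{0,0}_{\lambda,\ell}$ norms on the left.

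The heart of the computation is the diffusion term. Integration by parts in $y$ on $-\int \partial_y^2 w \cdot e^{-2\lambda t}\langle y\rangle^{2\ell} w$ gives $\int e^{-2\lambda t}\langle y\rangle^{2\ell}(\partial_y w)^2$ — this is the $\|\partial_y w\|^2_{\mathcal B^{0,0}_{\lambda,\ell}}$ term on the left — plus a term $\int e^{-2\lambda t}\partial_y(\langle y\rangle^{2\ell}) w\,\partial_y w$ which is controlled by $\epsilon\|\partial_y w\|^2 + C_\epsilon\|w\|^2$ (here $\ell$ finite and $\langle y\rangle^{2\ell-1}\le\langle y\rangle^{2\ell}$), and a boundary term at $y=0$ of the form $(\partial_y w) w|_{y=0}$. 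The boundary condition $(\partial_y w + 2\eta w)|_{y=0} = -\tilde f|_{y=0}$ converts this into $(-2\eta w - \tilde f)w|_{y=0}$; since $\eta|_{y=0}$ is bounded by $\lambda_3$ and $w|_{y=0}$ is controlled by the trace inequality $\|w(\cdot,0)\|^2 \lesssim \epsilon\|\partial_y w\|^2 + C_\epsilon\|w\|^2$, these boundary contributions are absorbed, leaving a controllable $\|\tilde f|_{y=0}\|$ term which is itself dominated by $\|\tilde f\|_{\mathcal B^{0,0}_{\lambda,\ell}}$ via another trace estimate. The transport terms $\partial_x(\tilde u w)$ and $\partial_y(\tilde v w)$ are handled together: after integration by parts they combine using the divergence-free identity $\partial_x\tilde u + \partial_y\tilde v = 0$, so the top-order piece cancels and only $\|\nabla\tilde u\|_{L^\infty}$-type factors (bounded by $\lambda_3$ through the Sobolev embedding $\mathcal A^k_\ell \hookrightarrow C^{k-2}_b$) multiply $\|w\|^2$. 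The terms $-2\partial_y(\eta w)$ and $\partial_y(\zeta\int_0^y w)$ are integrated by parts to move the $\partial_y$ onto $w$, producing $\eta\,w\,\partial_y w$ and $\zeta(\int_0^y w)\partial_y w$ type integrals; the former is bounded by $\epsilon\|\partial_y w\|^2 + C\|\eta\|_{L^\infty}^2\|w\|^2$, and for the latter one uses the weighted Hardy inequality $\|\langle y\rangle^{\ell}\int_0^y w\,d\tilde y\|_{L^2}\lesssim \|\langle y\rangle^{\ell}w\|_{L^2}$, valid precisely because $\ell>1/2$, together with $\zeta\in\mathcal B^{0,0}_{0,\ell}$ (so $\langle y\rangle^\ell\zeta$ contributes the right weight). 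Finally $\partial_y\tilde f$ pairs with $e^{-2\lambda t}\langle y\rangle^{2\ell}w$; integrating by parts puts the derivative on $w$ and $\langle y\rangle^{2\ell}$, giving $\le \epsilon\|\partial_y w\|^2 + C\|\tilde f\|^2_{\mathcal B^{0,0}_{\lambda,\ell}}$.

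Collecting everything, choosing $\epsilon$ small to absorb all the $\|\partial_y w\|^2$ error terms into the good diffusion term, and then choosing $\lambda = \lambda(T, \lambda_3)$ large enough to absorb all the residual $C\|w\|^2$ terms into $\lambda\|w\|^2_{\mathcal B^{0,0}_{\lambda,\ell}}$, yields the claimed inequality with a constant $C(T)$ depending only on $T$ and (through $\lambda$) on $\lambda_3$. The main obstacle I anticipate is the careful bookkeeping of the nonlocal term $\partial_y(\zeta\int_0^y w)$: one must check that the Hardy inequality indeed closes with the available weight $\langle y\rangle^\ell$ (this is why $\ell>1/2$ is imposed and why $\zeta$ is measured in the $\ell$-weighted norm rather than the $\ell=0$ norm), and that the interplay between this term, the weight derivative terms, and the boundary terms does not force a loss. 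A secondary technical point is justifying the integrations by parts and the finiteness of all integrals a priori — this is handled by first working with smooth solutions decaying in $y$, which is legitimate since the statement is an a priori estimate used subsequently in a regularization/iteration scheme.
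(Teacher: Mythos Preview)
Your overall strategy---multiply \eqref{eq2-2} by $e^{-2\lambda t}\langle y\rangle^{2\ell} w$, integrate, and choose $\lambda$ large---is exactly what the paper does. However, two technical claims in your proposal are incorrect as stated.

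\textbf{Boundary terms.} You integrate by parts not only in $-\partial_y^2 w$ but also in $-2\partial_y(\eta w)$, in $\partial_y(\zeta\int_0^y w)$, and in $\partial_y\tilde f$. Each of these produces a boundary contribution at $y=0$. If you keep \emph{all} of them, the total boundary term at $y=0$ is
\[
\big(\partial_y w + 2\eta w + \tilde f\big)\,w\big|_{y=0},
\]
which vanishes identically by the boundary condition in \eqref{eq2-2}. The paper simply uses this exact cancellation and writes down the resulting differential inequality with no boundary remainder. Your proposal instead keeps only the contribution from $-\partial_y^2 w$, substitutes the boundary condition, and then tries to control the leftover $-\tilde f\, w|_{y=0}$ by a trace estimate. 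That last step fails: $\|\tilde f(\cdot,0)\|$ is \emph{not} dominated by $\|\tilde f\|_{\mathcal B^{0,0}_{\lambda,\ell}}$, since the latter is a pure $L^2_{t,x,y}$ norm carrying no $y$-derivative. The fix is simply to track all four boundary terms and observe the cancellation.

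\textbf{The nonlocal term.} The weighted Hardy inequality you write,
\[
\Big\|\langle y\rangle^{\ell}\int_0^y w\,d\tilde y\Big\|_{L^2}\lesssim \|\langle y\rangle^{\ell}w\|_{L^2},
\]
is false for $\ell>0$: if $w$ is compactly supported with nonzero integral, the left side is infinite. What the paper actually uses (and what your parenthetical about $\langle y\rangle^\ell\zeta$ suggests you are close to) is the elementary bound
\[
\Big\|\int_0^y w\,d\tilde y\Big\|_{L^\infty_y}\le \Big(\int_0^\infty\langle\tilde y\rangle^{-2\ell}d\tilde y\Big)^{1/2}\|\langle y\rangle^\ell w\|_{L^2_y},
\]
valid precisely when $\ell>1/2$. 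One then estimates
\[
\Big|\int \zeta\Big(\int_0^y w\Big)\langle y\rangle^{2\ell}\partial_y w\,dy\Big|
\le \|\langle y\rangle^\ell\zeta\|_{L^2_y}\,\Big\|\int_0^y w\Big\|_{L^\infty_y}\,\|\langle y\rangle^\ell\partial_y w\|_{L^2_y},
\]
which closes. With these two corrections your argument matches the paper's and goes through.
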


\begin{proof}
Multiplying \eqref{eq2-2} by $e^{-2\lambda t}\langle y\rangle^{2\ell} w$
and integrating over $\RR^2_+$, we get
\begin{align*}
&\frac12 \partial_t \|e^{-\lambda t} w(t)\|^2_{L^2_\ell(\RR^2_+)} +
\lambda \|e^{-\lambda t} w(t)\|^2_{L^2_\ell(\RR^2_+)}+
\|  e^{-\lambda t}\partial_y w(t) \|^2_{L^2_\ell(\RR^2_+)} \\
&\leq \Big(2\ell+2\|\eta\|_{L^\infty}+
\|\zeta\|_{L^\infty(\RR_x; L^2_\ell(\RR_{+, y}))}\Big)
\| e^{-\lambda t}\partial_y w(t)\|_{L^2_\ell(\RR^2_+)} \,
\| e^{-\lambda t} w(t)\|_{L^2_\ell(\RR^2_+)}\\
&+\ell\|\tilde{v}\|_{L^\infty_{-1}}\| e^{-\lambda t} w(t)
\|^2_{L^2_\ell(\RR^2_+)}+
2\ell \| e^{-\lambda t} \tilde{f}(t)\|_{L^2_{\ell-1}(\RR^2_+)}
 \,\| e^{-\lambda t} w(t)\|_{L^2_\ell(\RR^2_+)}\\
&+\| e^{-\lambda t} \tilde{f}(t)\|_{L^2_{\ell}(\RR^2_+)}
\,\| e^{-\lambda t}\partial_y w(t)\|_{L^2_\ell(\RR^2_+)},
\end{align*}
by using the boundary condition given in \eqref{eq2-2}.

Using the classical Sobolev embedding theorem, it follows
$$
4\Big(2\ell+2\|\eta\|_{L^\infty}+\|\zeta\|_{L^\infty_{t, x}
( L^2_{\ell, y})}\Big)^2+2\ell\|\tilde{v}\|_{L^\infty}\leq
(4\ell(1+\lambda_{3, 0}))^2 .
$$
Thus, by taking $\lambda\ge (4\ell(1+\lambda_{3, 0}))^2$, we get
\begin{align*}
\partial_t \|e^{-\lambda t} w(t)\|^2_{L^2_\ell(\RR^2_+)} &+
\lambda \|e^{-\lambda t} w(t)\|^2_{L^2_\ell(\RR^2_+)}
+\|  e^{-\lambda t}\partial_y w(t) \|^2_{L^2_\ell(\RR^2_+)}\\
&\leq
4 \| e^{-\lambda t}\tilde{f}(t)\|^2_{L^2_{\ell}(\RR^2_+)},
\end{align*}
which implies
\begin{equation*}
\|w\,\|^2_{\tilde{\mathcal{B}}^{0, 0}_{\lambda, \ell}} +
\lambda \|w\|^2_{\mathcal{B}^{0, 0}_{\lambda, \ell}}+
\|\partial_y w\|^2_{\mathcal{B}^{0, 0}_{\lambda, \ell}}
\le C(T)\|\tilde{f}\|^2_{\mathcal{B}^{0, 0}_{\lambda, \ell}},
\end{equation*}
for all fixed $0<T<+\infty$.
\end{proof}

\begin{lemm} {\bf (Energy estimate for tangential derivatives)}
 Under the assumptions of Theorem \ref{estimate-w}, for any fixed $T>0$,
 there is a constant $C(T)>0$ such that
\begin{align}\label{energy-k_2}
&\|w\,\|^2_{\tilde{\mathcal{B}}^{k, 0}_{\lambda, \ell}} +
\lambda \|w\|^2_{{\mathcal{B}}^{k, 0}_{\lambda, \ell}}+
\|\partial_y w\|^2_{{\mathcal{B}}^{k, 0}_{\lambda, \ell}}
\le C(T)\left( \|\tilde{f}\|^2_{{\mathcal{B}}^{k, 0}_{\lambda, \ell}}\right.\notag\\
&\qquad+\Big(\|\zeta\|^2_{{\mathcal{B}}^{k, 0}_{0, \ell}}+
\|\partial^k_{\mathcal{T}}\tilde{v}\|^2_{L^\infty_y(L^2_{t,x})}
+\|\partial_{\mathcal T}^k\overline{\eta}\|^2_{L^2_y(L^\infty_{t,x})}\Big)
\|w\|^2_{{\mathcal{B}}^{2, 1}_{\lambda, \ell}} \\
&\left.\qquad+\Big(\|\tilde{u}-u^s\|^2_{{\mathcal{B}}^{k, 0}_{0, 0}}
+\|\partial^{k}_{\mathcal{T}}u^s
\|_{L^2_{t,y}}+
\|\eta-\overline{\eta}\|^2_{{\mathcal{B}}^{k, 0}_{0, 0}}\Big)
\|w\|^2_{{\mathcal{B}}^{3, 1}_{\lambda, \ell}}\right).
\notag
\end{align}
\end{lemm}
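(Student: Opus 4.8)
The plan is to differentiate the equation \eqref{eq2-2} in the tangential direction and run the same weighted $L^2$ argument as in Lemma \ref{prop3.1}, keeping careful track of the commutator terms. Fix a multi-index $\beta=(\beta_0,\beta_1)$ with $|\beta|\le k$ and apply $\partial^\beta_{\mathcal T}$ to \eqref{eq2-2}. Since $\partial^\beta_{\mathcal T}$ commutes with $\partial_y$ and $\partial_t$, the resulting equation for $w_\beta:=\partial^\beta_{\mathcal T} w$ reads
\begin{equation*}
\partial_t w_\beta+\partial_x(\tilde u\,w_\beta)+\partial_y(\tilde v\,w_\beta)-2\partial_y(\eta\,w_\beta)+\partial_y\!\Big(\zeta\!\int_0^y w_\beta\,d\tilde y\Big)-\partial_y^2 w_\beta=\partial_y\tilde f_\beta+\partial_y R_\beta,
\end{equation*}
where $R_\beta$ collects the commutators: schematically $R_\beta=\sum_{0<\gamma\le\beta}\binom{\beta}{\gamma}\big[(\partial^\gamma_{\mathcal T}\tilde u)\,w_{\beta-\gamma}+\cdots\big]$ with analogous terms coming from $\tilde v$, $\eta$ and $\zeta$; and the boundary condition becomes $(\partial_y w_\beta+2\eta w_\beta)|_{y=0}=-\tilde f_\beta|_{y=0}-2\sum_{0<\gamma\le\beta}\binom{\beta}{\gamma}(\partial^\gamma_{\mathcal T}\eta)\,w_{\beta-\gamma}|_{y=0}$. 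One also checks, using the compatibility conditions up to order $k$, that no spurious boundary contributions arise when integrating by parts, and that the $\int_0^y$ term is controlled exactly as in the proof of Lemma \ref{prop3.1} using $\ell>1/2$ (so that $\langle y\rangle^{-\ell}\in L^2$ allows $\|\int_0^y w_\beta\|_{L^2_\ell}\lesssim\|\partial_y w_\beta\|_{L^2_\ell}$ after a Hardy-type argument, or directly $\|\int_0^y w_\beta\|_{L^2}\lesssim\|w_\beta\|_{L^2_\ell}$).

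The second step is to multiply the equation for $w_\beta$ by $e^{-2\lambda t}\langle y\rangle^{2\ell}w_\beta$, integrate over $\RR^2_+$, and integrate by parts in the same way as in Lemma \ref{prop3.1}. The principal part reproduces the good terms $\frac12\partial_t\|e^{-\lambda t}w_\beta\|^2_{L^2_\ell}+\lambda\|e^{-\lambda t}w_\beta\|^2_{L^2_\ell}+\|e^{-\lambda t}\partial_y w_\beta\|^2_{L^2_\ell}$ on the left; the $\tilde u,\tilde v,\eta,\zeta$ terms with the leading-order factor produce contributions absorbable by taking $\lambda$ large, exactly as before, contributing the $C(T)$ and the $\|\tilde f\|^2_{\mathcal B^{k,0}_{\lambda,\ell}}$ on the right. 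The new work is to bound $\int e^{-2\lambda t}\langle y\rangle^{2\ell}\,\partial_y R_\beta\cdot w_\beta$; integrating by parts moves $\partial_y$ onto $\langle y\rangle^{2\ell}w_\beta$, so one needs $\|R_\beta\|_{L^2_\ell}$ (and $\|R_\beta|_{y=0}\|$ for the boundary term) times $\|\partial_y w_\beta\|_{L^2_\ell}+\|w_\beta\|_{L^2_\ell}$, which is then split by Young's inequality with the dissipation term. Summing over $|\beta|\le k$ gives \eqref{energy-k_2} once we verify the two claimed bounds on $\sum_{|\beta|\le k}\|R_\beta\|^2$.

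The third step — and the main obstacle — is precisely estimating the commutator norms $\sum_{|\beta|\le k}\|R_\beta\|^2_{\mathcal B^{0,0}_{\lambda,\ell}}$ by the right-hand side of \eqref{energy-k_2}, i.e.\ distributing the derivatives so that the ``high'' factor always appears in a coefficient norm ($\lambda$-type norm of $\tilde u-u^s$, $\partial^k_{\mathcal T}u^s$, $\eta-\bar\eta$, $\zeta$, $\partial^k_{\mathcal T}\tilde v$, $\partial^k_{\mathcal T}\bar\eta$) and the remaining factor of $w$ carries at most $\max(2,|\beta|-k+\cdots)$ tangential and one $y$ derivative — hence lands in $\|w\|_{\mathcal B^{2,1}_{\lambda,\ell}}$ or $\|w\|_{\mathcal B^{3,1}_{\lambda,\ell}}$. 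For a term like $\partial^\gamma_{\mathcal T}\tilde u\cdot w_{\beta-\gamma}$ with $1\le|\gamma|\le k$, one uses the Morse-type inequality of Lemma \ref{lemm2.1} together with the Sobolev embeddings \eqref{sob-C}: when $|\gamma|$ is large, $w_{\beta-\gamma}$ has few derivatives and is placed in $L^\infty_{t,x}L^2_y$ or $L^2$ via \eqref{sob-C}, costing two extra derivatives and hence the indices $2$ and $3$; when $|\gamma|$ is small, $\partial^\gamma_{\mathcal T}\tilde u$ is placed in $L^\infty$ (again by \eqref{sob-C}, absorbed into the harmless $\lambda_{3,0}$-type constant) and $w_{\beta-\gamma}$ carries almost all $k$ tangential derivatives. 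The split between the $\mathcal B^{k,0}_{0,0}$ coefficient norms (which multiply $\|w\|_{\mathcal B^{3,1}}$) and the $L^2_y(L^\infty_{t,x})$ / $L^\infty_y(L^2_{t,x})$ coefficient norms (which multiply $\|w\|_{\mathcal B^{2,1}}$) is dictated by which Hölder pairing is available for each coefficient: $\tilde u-u^s$, $u^s$, $\eta-\bar\eta$ are controlled in full $\mathcal B^{k,0}$ Sobolev norms, whereas $\tilde v$ and $\bar\eta$ are only given in the weaker $L^\infty_yL^2_{t,x}$ and $L^2_yL^\infty_{t,x}$ norms, so they must be paired with a $w$-factor in the dual $\mathcal D$-type / $\mathcal C$-type space — and one more derivative on $w$ is needed to realize that embedding, explaining why they sit next to $\|w\|_{\mathcal B^{2,1}}$ rather than $\|w\|_{\mathcal B^{3,1}}$. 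The bookkeeping is routine but lengthy; everything else is a direct repetition of Lemma \ref{prop3.1}.
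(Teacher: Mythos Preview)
Your plan is correct and follows essentially the same route as the paper: differentiate \eqref{eq2-2} by $\partial^\beta_{\mathcal T}$, test against $e^{-2\lambda t}\langle y\rangle^{2\ell}\partial^\beta_{\mathcal T}w$, absorb the leading-order coefficient terms by taking $\lambda>(4\ell(1+\lambda_{3,0}))^2$, and estimate the commutators via the Morse-type inequality (Lemma~\ref{lemm2.1}) and Sobolev embedding. One small correction: not every commutator appears as $\partial_y R_\beta$ --- the $\tilde u$-commutator comes from $\partial_x(\tilde u w)$ and carries a $\partial_x$, so in the paper it is paired directly with $w_\beta$ (not with $\partial_y w_\beta$ after an integration by parts in $y$); this is why the $\tilde u-u^s$ and $u^s$ contributions land on $\|\partial_x w\|_{L^\infty}$-type norms and hence on $\|w\|_{\mathcal B^{3,1}}$, while $\zeta$ (despite being in a full $\mathcal B^{k,0}_{0,\ell}$ norm) is paired with $w$ in $L^2_{y,\ell}(L^\infty_{t,x})$ and therefore sits with the $\|w\|_{\mathcal B^{2,1}}$ group --- so your heuristic for the $2$ vs.\ $3$ split is slightly off, but the mechanism (which H\"older pairing and hence which embedding is used) is exactly right.
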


\begin{proof}
Taking the differentiation $\partial^\beta_{\mathcal{T}}$ ($|\beta|\le k$) on
the equation in \eqref{eq2-2}, multiplying it  by $e^{-2\lambda t}\langle y\rangle^{2\ell}
 \partial^\beta_{\mathcal{T}} w$ and integrating over $\RR^2_+$, as
 in the proof of Lemma \ref{prop3.1}, for $\lambda>(4\ell(1+\lambda_{3, 0}))^2$,
 we have
\begin{equation}\label{est-1}
\begin{array}{l}
\partial_t \|e^{-\lambda t} \partial^\beta_{\mathcal{T}} w(t)
\|^2_{L^2_\ell(\RR^2_+)} +
\lambda \|e^{-\lambda t} \partial^\beta_{\mathcal{T}} w(t)
\|^2_{L^2_\ell(\RR^2_+)}+\|  e^{-\lambda t} \partial^\beta_{\mathcal{T}}
\partial_y w(t) \|^2_{L^2_\ell(\RR^2_+)} \\
\qquad \qquad \le 4\| e^{-\lambda t}\partial^\beta_{\mathcal{T}} \tilde{f}(t)
\|^2_{L^2_\ell} +A_1+A_2+A_3,
\end{array}
\end{equation}
where we have used the compatibility conditions of the problem \eqref{eq2-2} and
$$
\partial^\beta_{\mathcal{T}}\big(\tilde{v} w\big)|_{y=0}=0,
$$
by noting $\tilde{v}(t, x, y)=-\int^y_0 \partial_x \tilde{u}(t, x, \tilde{y}) d\tilde{y}$,
and $A_1, A_2$ and $A_3$ come from the commutators between $\partial^\beta_{\mathcal{T}}$
and the nonlinear terms in  \eqref{eq2-2}. For brevity,
the precise definitions  of $A_{i}$, $i=1,2,3$ are given as follows respectively.

Firstly,
\begin{align*}
A_1=&\sum_{\beta_1+\beta_2\leq \beta; |\beta_2|<|\beta|}
C^{\beta_1}_{\beta}\left|\int_{\RR^2_+}
e^{-2\lambda t}\langle y\rangle^{2\ell}(\partial^{\beta_1}_{\mathcal{T}}
\tilde{u})
(\partial^{\beta_2}_{\mathcal{T}}\partial_x w)
(\partial^{\beta}_{\mathcal{T}} w)dx dy\right|\\
&+\sum_{\beta_1+\beta_2\leq \beta; |\beta_2|<|\beta|}C^{\beta_1}_{\beta}
\left|\int_{\RR^2_+}
e^{-2\lambda t}\langle y\rangle^{2\ell}(\partial^{\beta_1}_{\mathcal{T}}
\tilde{v})
(\partial^{\beta_2}_{\mathcal{T}} w)
(\partial^{\beta}_{\mathcal{T}}\partial_y w)dx dy\right|\, .
\end{align*}
Therefore,
\begin{align*}
A_1\lesssim &\|e^{-\lambda t}\partial^{\beta}_{\mathcal{T}} w(t)
\|_{L^2_\ell(\RR^2_+)}\big\{\|\tilde{u}(t)\|_{ L^\infty(\RR^2_+)}
\|e^{-\lambda t}\partial^{|\beta|}_{\mathcal{T}} w(t)
\|_{L^2_\ell(\RR^2_+)}\\
&+\|\partial^{|\beta|}_{\mathcal{T}}( \tilde{u}(t)-u^s(t))
\|_{ L^2(\RR^2_+)}\|e^{-\lambda t}\partial_x w(t)
\|_{L^\infty_\ell(\RR^2_+)}\\
&+\|\partial^{|\beta|}_{\mathcal{T}}u^s(t)
\|_{ L^2_y}\|e^{-\lambda t}\partial_x w(t)
\|_{L^\infty_{y,\ell}(L^2_x)}\big\}\\
&+ \|e^{-\lambda t}\partial^{\beta}_{\mathcal{T}}\partial_y w(t)
\|_{L^2_\ell(\RR^2_+)}\big\{\|\tilde{v}(t)\|_{ L^\infty(\RR^2_+)}
\|e^{-\lambda t}\partial^{|\beta|-1}_{\mathcal{T}} w(t)\|_{L^2_\ell(\RR^2_+)}\\
&+ \|\partial^{|\beta|}_{\mathcal{T}}\tilde{v}(t)
\|_{L^\infty_y(L^2_x)}\|e^{-\lambda t} w(t)
\|_{L^2_{y,\ell}(L^\infty_x)} \big\},
\end{align*}
where we have used Lemma \ref{lemm2.1}.
Here and in the sequel, for simplicity, we shall use the notation $A\lesssim B$
when there exists a generic positive
constant $C$ such that $A \le C B$. Similar definition holds for $A\gtrsim B$.

Secondly, for
\begin{align*}
A_2&=\left|\int_{\RR^2_+}
e^{-2\lambda}\langle y\rangle^{2\ell} \partial^{\beta}_{\mathcal{T}}(\eta w)
(\partial^{\beta}_{\mathcal{T}} \partial_yw)dx dy\right|\,,
\end{align*}
we have
\begin{align*}
A_2\lesssim &\|\eta(t)\|_{L^\infty(\RR^2_+)}\|e^{-\lambda t}
\partial^{|\beta|}_{\mathcal{T}} w(t)\|_{L^2_\ell(\RR^2_+)}
\|e^{-\lambda t}\partial^{\beta}_{\mathcal{T}}\partial_y w(t)
\|_{L^2_\ell(\RR^2_+)}\\
&+\|\partial^{|\beta|}_{\mathcal{T}}(\eta-\overline{\eta})(t)
\|_{L^2(\RR^2_+)}\|e^{-\lambda t} w(t)\|_{L^\infty_\ell(\RR^2_+)}
\|e^{-\lambda t}\partial^{\beta}_{\mathcal{T}}\partial_y w(t)
\|_{L^2_\ell(\RR^2_+)}\\
&+\|\partial^{|{\beta}|}_{\mathcal{T}}\overline{\eta}(t)
\|_{L^\infty(\RR^2_+)}\|e^{-\lambda t} w(t)\|_{L^2_\ell(\RR^2_+)}
\|e^{-\lambda t}\partial^{\beta}_{\mathcal{T}}\partial_y w(t)
\|_{L^2_\ell(\RR^2_+)}.
\end{align*}
Finally, for
\begin{align*}
A_3&=\left|\int_{\RR^2_+}
e^{-2\lambda}\langle y\rangle^{2\ell} \partial^{\beta}_{\mathcal{T}}
\Big(\zeta \int^y_0 w d \tilde{y}\Big)
(\partial^{\beta}_{\mathcal{T}} \partial_yw)dx dy\right|\,,
\end{align*}
we get for $\ell>1/2$,
\begin{align*}
A_3\lesssim &\|\zeta(t)\|_{L^2_{y,\ell}(L^\infty_x)}
\|e^{-\lambda t}\partial^{|\beta|}_{\mathcal{T}} w(t)\|_{L^2_\ell
(\RR^2_+)}\|e^{-\lambda t}\partial^{\beta}_{\mathcal{T}}\partial_y w(t)
\|_{L^2_\ell(\RR^2_+)}\\
&+\|\partial^{|\beta|}_{\mathcal{T}}\zeta(t)\|_{L^2_\ell(\RR^2_+)}
\|e^{-\lambda t} w(t)\|_{L^2_{y,\ell}(L^\infty_x)}
\|e^{-\lambda t}\partial^{\beta}_{\mathcal{T}}\partial_y w(t)
\|_{L^2_\ell(\RR^2_+)}.
\end{align*}
Substituting these estimates of $A_1, A_2, A_3$ into \eqref{est-1}, and taking
summation over all $|\beta|\leq k$, it follows
\begin{align*}
&\partial_t \|e^{-\lambda t} \partial^k_{\mathcal{T}} w(t)
\|^2_{L^2_\ell(\RR^2_+)} +
\lambda \|e^{-\lambda t} \partial^k_{\mathcal{T}}w(t)
\|^2_{L^2_\ell(\RR^2_+)}+\|  e^{-\lambda t} \partial^k_{\mathcal{T}}
\partial_y w(t) \|^2_{L^2_\ell(\RR^2_+)}\\
&\lesssim \| e^{-\lambda t}\partial^k_{\mathcal{T}}
\tilde{f}(t)\|^2_{L^2_\ell}+
\|\partial^k_{\mathcal{T}}(\tilde{u}-u^s)\|^2_{L^2(\RR^2_+)}
\|e^{-\lambda t}\partial_x w(t)\|^2_{L^\infty_\ell(\RR^2_+)}\\
&\quad
+\|\partial^k_{\mathcal{T}}u^s\|^2_{L^2_y}
\|e^{-\lambda t}\partial_x w(t)\|^2_{L^\infty_{y,\ell}(L^2_x)}
+\|\partial^k_{\mathcal{T}}\tilde{v}\|^2_{L^\infty_y(L^2_x)}
\|e^{-\lambda t} w(t)\|^2_{L^2_{y,\ell}(L^\infty_x)}\\
&\quad+\|\partial^{k}_{\mathcal{T}}\overline{\eta}(t)
\|^2_{L^2_y(L^\infty_x)}\|e^{-\lambda t} w(t)\|^2_{L^\infty_{y,\ell}(L^2_x)}+
\|\partial^{k}_{\mathcal{T}}(\eta-\overline{\eta})(t)
\|^2_{L^2(\RR^2_+)}\|e^{-\lambda t} w(t)\|^2_{L^\infty_\ell(\RR^2_+)}\\
&\quad+\|\partial^{k}_{\mathcal{T}}\zeta(t)
\|^2_{L^2_\ell(\RR^2_+)}\|e^{-\lambda t} w(t)\|^2_{L^2_{y,\ell}(L^\infty_x)},
\end{align*}
for any $\lambda>(4\ell(1+\lambda_{3, 0}))^2$.
Integrating the above inequality on $[0, T]$, and using the compatibility condition
$$
(\partial^{k}_{t, x, y}w)|_{t=0}=0,
$$
we get for any fixed $T>0$,
\begin{align*}
&\|w\,\|^2_{\tilde{\mathcal{B}}^{k, 0}_{\lambda, \ell}} +
\lambda \|w\|^2_{{\mathcal{B}}^{k, 0}_{\lambda, \ell}}+
\|\partial_y w\|^2_{{\mathcal{B}}^{k, 0}_{\lambda, \ell}}
\le C(T)\Big\{ \|\tilde{f}\|^2_{{\mathcal{B}}^{k, 0}_{\lambda, \ell}}\\
&+\Big(\|\zeta\|^2_{{\mathcal{B}}^{k, 0}_{0, \ell}}+
\|\partial^k_{\mathcal{T}}\tilde{v}\|^2_{L^2([0, T]\times
\RR_x; L^\infty(\RR_+))}\Big)\|e^{-\lambda t} w
\|^2_{L^\infty([0, T]\times\RR_x; L^2_{y,\ell}(\RR_{+}))} \\
&+\|\tilde{u}-u^s\|^2_{{\mathcal{B}}^{k, 0}_{0, 0}}\|e^{-\lambda t}
\partial_x w\|^2_{L^\infty_\ell(\RR^2_+)}+\|\partial^k_{\mathcal{T}}
u^s\|^2_{L^2_{t,y}}\|e^{-\lambda t}
\partial_x w\|^2_{L^\infty_{y,\ell}(L^\infty([0,T],L^2_{x}(\RR)))}\\
&+
\|\partial_{\mathcal T}^k\overline{\eta}\|^2_{L^2_y(L^\infty_{t,x})}
\|e^{-\lambda t} w\|^2_{L^\infty_{y,\ell}(L^2_{t,x})}\\
&+
\|\eta-\overline{\eta}\|^2_{{\mathcal{B}}^{k, 0}_{0, 0}}
\|e^{-\lambda t} w\|^2_{L^\infty_\ell([0, T]\times\RR^2_+)}\Big\}.
\end{align*}
Using the Sobolev embedding theorem in the above inequality, the estimate
\eqref{energy-k_2} follows immediately.
\end{proof}

\begin{rema}
The estimate \eqref{energy-k_2} implies
\begin{equation}\label{energy-k_2b}
\|w\|^2_{\mathcal{B}^{k, 1}_{\lambda, \ell}}\lesssim
\|\tilde{f}\|^2_{{\mathcal{B}}^{k, 0}_{\lambda, \ell}}+
\lambda^2_{k, 0}\|w\|^2_{\mathcal{B}^{3, 1}_{\lambda, \ell}}.
\end{equation}
Moreover, using the same argument as in the above
proof together with Lemma \ref{prop3.1}, when
$\lambda>(4\ell(1+\lambda_{3, 0}))^2$, we can obtain
\begin{equation}\label{energy-3}
\|w\|_{\mathcal{B}^{k, 1}_{\lambda, \ell}}\lesssim
\|\tilde{f}\|_{{\mathcal{B}}^{k, 0}_{\lambda, \ell}},\qquad 0\leq k\le 3.
\end{equation}
\end{rema}

\smallskip
\noindent\underline{\bf Proof of Theorem \ref{estimate-w}}:
Recall from \eqref{eq2-2}  that
\begin{equation}\label{eq2-2c}
\partial^2_y w=\partial_t w+\tilde{u} \partial_x w+\tilde{v}
\partial_y w-2\partial_y(\eta w)+\partial_{y}\Big(\zeta
\int^y_0 w(t, x, \tilde{y})d\tilde{y}\Big)-\partial_{y}\tilde{f}.
\end{equation}
By applying $\partial^{k_1}_{\mathcal{T}}$ to this equation
and using Lemma \ref{lemm2.1}, we get
\begin{align*}
\|w\|_{\mathcal{B}^{k_1, 2}_{\lambda, \ell}}&\leq
\|w\|_{\mathcal{B}^{k_1+1, 0}_{ \lambda, \ell}}+
\|\tilde{u} \partial_x w\|_{\mathcal{B}^{k_1, 0}_{ \lambda, \ell}}
+\|\tilde{v} \partial_y w\|_{\mathcal{B}^{k_1, 0}_{ \lambda, \ell}}
+2\|\eta w\|_{\mathcal{B}^{k_1, 1}_{ \lambda, \ell}}\\
&+
\|(\partial_y \zeta)\int^y_0 w(t, x, \tilde{y})d\tilde{y}
\|_{\mathcal{B}^{k_1, 0}_{ \lambda, \ell}}
+\|\zeta w\|_{\mathcal{B}^{k_1, 0}_{ \lambda, \ell}}+
\|\partial_y\tilde{f}\|_{\mathcal{B}^{k_1, 0}_{ \lambda, \ell}}\\
&\lesssim (1+\|\tilde{u}\|_{L^\infty})
\|w\|_{\mathcal{B}^{k_1+1, 0}_{ \lambda, \ell}}
+ (\|\tilde{v}\|_{L^\infty}+\|\eta\|_{L^\infty})
\|w\|_{\mathcal{B}^{k_1, 1}_{ \lambda, \ell}}\\
&+\|\zeta\|_{L^\infty} \|w\|_{\mathcal{B}^{k_1, 0}_{ \lambda, \ell}}+
 \|\tilde{u}-u^s\|_{\mathcal{B}^{k_1, 0}_{0, 0}}
 \|\partial_x w\|_{L^\infty_{\lambda, \ell}}\\
&+\|\partial^{k_1}_{\mathcal{T}}u^s\|_{L^2_y(L^\infty_t)}
 \|\partial_x w\|_{L^\infty_{y, \ell}(L^2_{t,x,\lambda})}
+\|\partial^{k_1}_{\mathcal{T}}\tilde{v}\|_{L^\infty_y(L^2_{t,x})}
\|\partial_y w\|_{L^2_{y, \ell}(L^\infty_{t,x,\lambda})}  \\
&+\|\partial_{\mathcal T}^{k_1}\partial_y\overline{\eta}
\|_{L^2_y(L^\infty_{t,x})} \|w\|_{L^\infty_{y, \ell}(L^2_{t,x,\lambda})}+\|\eta-\overline{\eta}
\|_{\mathcal{B}^{k_1, 1}_{0, 0}} \|w\|_{L^\infty_{ \lambda, \ell}}\\
&+
\|\partial_y\tilde{f}\|_{\mathcal{B}^{k_1, 0}_{ \lambda, \ell}}
+\|\partial_y\zeta\|_{L^\infty_{t, x}( L^2_\ell)}
\|w\|_{\mathcal{B}^{k_1, 0}_{ \lambda, \ell}}+
\|\zeta\|_{\mathcal{B}^{k_1, 1}_{0, \ell}}\|w\|_{L^\infty_{t, x}
( L^2_{\lambda, \ell})} .
\end{align*}
Then by using the Sobolev embedding theorem, it follows that
$$
\|w\|_{\mathcal{B}^{k_1, 2}_{\lambda, \ell}}\lesssim \lambda_{2, 1}
(\|w\|_{\mathcal{B}^{k_1+1, 0}_{ \lambda, \ell}}+
\|w\|_{\mathcal{B}^{k_1, 1}_{ \lambda, \ell}})+
\|\partial_y\tilde{f}\|_{\mathcal{B}^{k_1, 0}_{ \lambda, \ell}}+
\lambda_{k_1, 1}\|w\|_{\mathcal{B}^{3, 1}_{ \lambda, \ell}}.
$$
And from \eqref{energy-k_2b} and \eqref{energy-3}, we get
$$
\|w\|_{\mathcal{B}^{k_1, 2}_{\lambda, \ell}}\lesssim \lambda_{2, 1}
\Big(\|\tilde{f}\|_{\mathcal{B}^{k_1+1, 0}_{ \lambda, \ell}}+
\|\tilde{f}\|_{\mathcal{B}^{k_1, 1}_{ \lambda, \ell}}\Big)+
(\lambda_{2, 1}\lambda_{k_1+1, 0}+\lambda_{k_1, 1})\|\tilde{f}
\|_{\mathcal{B}^{3, 0}_{ \lambda, \ell}}.
$$

For $k_2>2$,  differentiating the equation \eqref{eq2-2c}
by $\partial^{k_1}_{\mathcal{T}}\partial^{k_2-2}_y$, we have

\begin{align*}
\|w\|_{\mathcal{B}^{k_1, k_2}_{\lambda, \ell}}&\leq
\|w\|_{\mathcal{B}^{k_1+1, k_2-2}_{ \lambda, \ell}}+
\|\tilde{u} \partial_x w\|_{\mathcal{B}^{k_1, k_2-2}_{ \lambda, \ell}}
+\|\tilde{v} \partial_y w\|_{\mathcal{B}^{k_1, k_2-2}_{ \lambda, \ell}}\\
&+2\|\eta w\|_{\mathcal{B}^{k_1, k_2-1}_{ \lambda, \ell}}+
\|(\partial_y \zeta)\int^y_0 w(t, x, \tilde{y})d\tilde{y}
\|_{\mathcal{B}^{k_1, k_2-2}_{ \lambda, \ell}}\\
&+\|\zeta w\|_{\mathcal{B}^{k_1, k_2-2}_{ \lambda, \ell}}
+\|\partial_y\tilde{f}\|_{\mathcal{B}^{k_1, k_2-2}_{ \lambda, \ell}}\\
&\lesssim (1+\|\tilde{u}\|_{L^\infty})
\|w\|_{\mathcal{B}^{k_1+1, k_2-2}_{ \lambda, \ell}}
+ (\|\tilde{v}\|_{L^\infty}+\|\eta\|_{L^\infty})
\|w\|_{\mathcal{B}^{k_1, k_2-1}_{ \lambda, \ell}}\\
&+\|\zeta\|_{L^\infty} \|w\|_{\mathcal{B}^{k_1, k_2-2}_{ \lambda, \ell}}
+ \|\partial_y\zeta\|_{L^\infty_{t, x}( L^2_\ell)}
\|w\|_{\mathcal{B}^{k_1, k_2-2}_{ \lambda, \ell}} +
\|\partial_y\tilde{f}\|_{\mathcal{B}^{k_1, k_2-2}_{ \lambda, \ell}}\\
&+ \|\tilde{u}-u^s\|_{\mathcal{B}^{k_1, k_2-2}_{0, 0}}
\|\partial_x w\|_{L^\infty_{\lambda, \ell}}
+ \|\partial^{k_1}_{\mathcal{T}}\partial^{k_2-2}_y u^s\|_{L^2_y(L^\infty_{t})}
\|\partial_x w\|_{L^\infty_{y, \ell}(L^2_{t, x, \lambda})}\\
&+ \|\partial^{k_1}_{\mathcal{T}}\partial^{k_2-2}_y\tilde{v}\|_{L^\infty_y(L^2_{t, x})}
\|\partial_y w\|_{L^2_{y, \ell}(L^\infty_{t, x,\lambda})} \\
&+\|\partial_{\mathcal T}^{k_1}\partial_y^{k_2-1}\overline{\eta}\|_{L^2_y(L^\infty_{t,x})}
 \|w\|_{L^\infty_{y, \ell}(L^2_{t,x,\lambda})}
+\|\eta-\overline{\eta}\|_{\mathcal{B}^{k_1, k_2-1}_{0, 0}}
 \|w\|_{L^\infty_{ \lambda, \ell}}\\
&
+\|\zeta\|_{\mathcal{B}^{k_1, k_2-1}_{0, \ell}}\|w
\|_{L^\infty_{y, \ell}(L^2_{t,x,\lambda})} \, .
\end{align*}
Therefore, we get
\begin{align*}
\|w\|_{\mathcal{B}^{k_1, k_2}_{\lambda, \ell}}&\lesssim \lambda_{2,1}
( \|w\|_{\mathcal{B}^{k_1+1, k_2-2}_{ \lambda, \ell}}+
 \|w\|_{\mathcal{B}^{k_1, k_2-1}_{ \lambda, \ell}})+
 \lambda_{k_1,k_2-1}\|w\|_{\mathcal{B}^{3, 1}_{ \lambda, \ell}}
+\|\tilde{f}\|_{\mathcal{B}^{k_1, k_2-1}_{ \lambda, \ell}},
\end{align*}
which immediately implies
\begin{align*}
\|w\|_{\mathcal{B}^{k_1, k_2}_{0, \ell}}&\lesssim \lambda_{2,1}
( \|w\|_{\mathcal{B}^{k_1+1, k_2-2}_{ 0, \ell}}+
 \|w\|_{\mathcal{B}^{k_1, k_2-1}_{ 0, \ell}})+
 \lambda_{k_1,k_2-1}\|w\|_{\mathcal{B}^{3, 1}_{ 0, \ell}}
+\|\tilde{f}\|_{\mathcal{B}^{k_1, k_2-1}_{0, \ell}},
\end{align*}
by fixing  $\lambda>(4\ell(1+\lambda_{3, 0}))^2$.

The proof of Theorem \ref{estimate-w} can then be completed by induction on $k_2$.

\section{Iteration scheme for the nonlinear Prandtl equation}\label{section4}
\setcounter{equation}{0}

{}From the estimate \eqref{energy-A} given in Theorem \ref{estimate-w}, we see that there
is a loss of regularity in the  solutions to the linearized Prandtl equation with respect
to the source term and the background state. In order to take care of this loss,
 we are going to apply the Nash-Moser-H\"ormander iteration scheme, cf.
 \cite{ali-gerard, hami, hor, moser, nash}, to study the nonlinear problem \eqref{eq-1}.

\subsection{The smoothing operators}

For a function $f$ defined on $\Omega=[0, +\infty[\times\RR_x\times\RR^+_y$,
let $\tilde{f}$ be its extension to $\RR^3$ by $0$. Then for a large constant
$\theta$, introduce a family of smoothing operators $S_\theta$:
$$
(S_{\theta} f) (t, x, y)=\int \rho_{_\theta}(\tau)\rho_{_\theta}(\xi)
\rho_{_\theta}(\eta)\tilde{f}(t-\tau {+}\theta^{-1}, x-\xi, y-\eta+\theta^{-1})d\tau d\xi
d \eta,
$$
where $\rho_{_\theta}(\tau)= \theta\,\rho(\theta\tau),
\,\rho\in C^\infty_0(\RR)$ with Supp $\rho\subseteq [-1, 1]$ and $\|\rho\|_{L^1}=1$. One has
\begin{equation*}
\{S_\theta\}_{\theta>0}:~ {\mathcal{A}}^0_{ \ell}
(\Omega)\longrightarrow \cap_{s\ge 0}{\mathcal{A}}^s_{
\ell}(\Omega),
\end{equation*}
together with
\begin{equation}\label{f2.5}
\begin{cases}
 \|S_\theta u\|_{{\mathcal{A}}^s_{
\ell}}\le
C_\rho\theta^{(s-\alpha)_+}\|u\|_{{\mathcal{A}}^\alpha_{ \ell}},
\quad {\rm for ~all}~s,
\alpha\ge 0,\\[2mm] \|(1-S_\theta) u\|_{{\mathcal{A}}^s_{
\ell}}\le C_\rho\theta^{s-\alpha}\|u\|_{{\mathcal{A}}^\alpha_{\ell}}, \quad {\rm for ~all}~0\le
s\le\alpha,
\end{cases}
\end{equation}
where the constant $C_\rho$ depends only on the function $\rho$ and the orders of differentiations
$s$ and $\alpha$. For the smoothing parameter, we set $\theta_n=\sqrt{\theta_0^2+n}$ for any $n\ge 1$ and
a large fixed constant $\theta_0$. We have also
\begin{equation}\label{f2.5-3}
\|(S_{\theta_n}-S_{\theta_{n-1}}) u\|_{{\mathcal{A}}^s_{\ell}}\le
C_\rho\theta_n^{s-\alpha}\Delta\theta_n \|u\|_{{\mathcal{A}}^\alpha_{\ell}}, \quad {\rm for ~all}~s,
\alpha\ge 0,
\end{equation}
where $\Delta\theta_n=\theta_{n+1}-\theta_n$.

The operator $S_\theta$ acting on the other three spaces introduced in  Section  \ref{s2.1} shares the same properties.

The following commutator estimates will be used frequently later,
\begin{lemm}\label{lemm-commutators}
For any proper function $f$, we have
\begin{equation}\label{commutator-1}
\|[\frac{1}{\partial_y u^s}, S_{\theta}](\partial_yf)
\|_{\cA^k_\ell}\le C_k\|\frac{f}{\partial_y u^s}\|_{\cA^k_\ell},\end{equation}
and \begin{equation}\label{commutator-2}
\|\partial_y[\frac{1}{\partial_y u^s}, \partial_y S_{\theta}]
f\|_{\cA^k_\ell}\le C_k\theta \|\frac{f}{\partial_y u^s}\|_{\cA^k_\ell},
\end{equation}
with the constant $C_k$ depends on the constant in \eqref{result-u-s}.
Similar inequalities hold for the norms  $\|\,\cdot\,\|_{{\mathcal B}_{\ell}^{k_1, k_2}},
\|\,\cdot\,\|_{{\mathcal C}_{\ell}^{k}}$ and $\|\,\cdot\,\|_{{\mathcal D}_{\ell}^{k}} $.
\end{lemm}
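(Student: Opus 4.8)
The plan is to reduce both estimates to the following simple structural fact: the smoothing operators $S_\theta$ defined above are convolution operators in $(t,x,y)$, and for a convolution operator $K$ with kernel $\kappa$, the commutator $[a, K]f(z) = \int \kappa(z') \big(a(z)-a(z-z')\big) f(z-z')\,dz'$, so that the difference quotient $\big(a(z)-a(z-z')\big)/|z'|$ appears naturally and can be bounded in terms of $\|\partial a\|_{L^\infty}$ times $|z'|$, which is in turn absorbed by the moments of $\kappa$. I would apply this with $a = 1/\partial_y u^s$, which by \eqref{result-u-s} (and the lower bound $\partial_y u^s>0$ coming from \eqref{mono}) lies together with all its derivatives in the relevant $L^\infty$-type spaces; in particular $\|a\|_{\mathcal{C}^{k}_0}$ and all the norms needed below are controlled by the constant in \eqref{result-u-s}.

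For \eqref{commutator-1}, write $[\tfrac{1}{\partial_y u^s}, S_\theta](\partial_y f)$. The key algebraic step is to move the $\partial_y$ off of $f$ and onto the kernel: since $S_\theta$ is a convolution in $y$ (up to the fixed shift $\theta^{-1}$), we have $S_\theta(\partial_y f) = \partial_y (S_\theta f)$, and then integrate by parts in the convolution variable so that no derivative falls on $f$ at all; equivalently, one writes
\begin{equation*}
\Big[\frac{1}{\partial_y u^s}, S_\theta\Big](\partial_y f)(z) = \int \kappa_\theta(z')\,\frac{\partial_y u^s(z-z') - \partial_y u^s(z)}{\partial_y u^s(z)\,\partial_y u^s(z-z')}\, f(z-z')\, \frac{dz'}{\,?\,} + \partial_y\Big(\text{similar}\Big),
\end{equation*}
the point being that each occurrence of $f$ is now multiplied by a factor $\partial_y u^s(z-z')-\partial_y u^s(z) = O(|z'|)$ divided by two copies of $\partial_y u^s$, i.e. by something of the form $|z'|\cdot g$ with $g$ bounded in $\mathcal{C}^k_0$ by \eqref{result-u-s}; the factor $|z'|$ is then killed by the first moment of $\kappa_\theta$, which is $O(\theta^{-1})$, cancelling the $\theta$ produced by the $\partial_y$ that landed on the kernel. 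Applying the Morse inequality of Lemma \ref{lemm2.1} to distribute the $\partial^{k_1}_{\mathcal{T}}\partial^{k_2}_y$ derivatives among $g$ (controlled) and $f/\partial_y u^s$, and using the smoothing bounds \eqref{f2.5}, yields $\|\,\cdot\,\|_{\mathcal{A}^k_\ell}\le C_k\|f/\partial_y u^s\|_{\mathcal{A}^k_\ell}$ with $C_k$ depending only on the constant in \eqref{result-u-s}. For \eqref{commutator-2} the computation is the same with one extra $\partial_y$ in front; that extra derivative either falls on the kernel — producing a factor $\theta$, which is exactly the $\theta$ on the right-hand side and is not cancelled this time since only one $|z'|$-gain is available — or falls on the coefficient $1/\partial_y u^s$, which is harmless by \eqref{result-u-s}.

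The main obstacle I anticipate is purely bookkeeping rather than conceptual: one must be careful that the kernel of $S_\theta$ is a product of three one-dimensional mollifiers and that the shift $+\theta^{-1}$ in the $t$ and $y$ variables (introduced so that $S_\theta$ maps functions on the half-space to functions on the half-space) does not spoil the moment cancellation — it does not, because a translation has trivial commutator with multiplication by a function only up to the same difference-quotient mechanism, and the shift amount $\theta^{-1}$ is itself $O(\theta^{-1})$, consistent with the claimed powers of $\theta$. The remaining routine work is to run the Morse/Leibniz expansion of $\partial^{k_1}_{\mathcal{T}}\partial^{k_2}_y$ applied to the product, to check that the worst term is the one with all derivatives on $f/\partial_y u^s$, and to note that the identical argument goes through verbatim with $\mathcal{A}^k_\ell$ replaced by any of $\mathcal{B}^{k_1,k_2}_{\ell}$, $\mathcal{C}^k_\ell$, $\mathcal{D}^k_\ell$, since Lemma \ref{lemm2.1} and the smoothing estimates hold for all of them.
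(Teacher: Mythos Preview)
Your overall strategy---write the commutator as a convolution, extract the difference $a(z)-a(z-z')$, gain a factor $|z'|\sim\theta^{-1}$---is exactly the paper's. But there is a genuine gap in how you control the resulting coefficient.

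You assert that $a=1/\partial_y u^s$ ``lies together with all its derivatives in the relevant $L^\infty$-type spaces'' by \eqref{result-u-s} and \eqref{mono}. This is false: since $u^s(t,y)\to 1$ as $y\to+\infty$ and $\partial_y u^s>0$, necessarily $\partial_y u^s(t,y)\to 0$, so $1/\partial_y u^s$ is \emph{unbounded}. Consequently the difference quotient $\big(a(z)-a(z-z')\big)/|z'|$, which is essentially $\partial(1/\partial_y u^s)=-\partial_y^2 u^s/(\partial_y u^s)^2$, is also unbounded; \eqref{result-u-s} only controls $\partial_y^2 u^s/\partial_y u^s$, with one power of $\partial_y u^s$ downstairs, not two.

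The paper's proof avoids this by grouping the two inverse powers of $\partial_y u^s$ differently: one copy is absorbed into the function, giving exactly the $\tilde b/\partial_y u^s$ that appears on the right-hand side of \eqref{commutator-1}, while the other copy is paired with the difference to form
\[
\frac{\partial_y u^s(t-\tau+\theta^{-1},y-\eta+\theta^{-1})-\partial_y u^s(t,y)}{\partial_y u^s(t,y)}.
\]
After the mean-value step this becomes $\theta^{-1}$ times quantities like $\dfrac{\partial_y^2 u^s(t',y')}{\partial_y u^s(t,y)}$ with $(t',y')$ \emph{shifted} from $(t,y)$ by $O(\theta^{-1})$. Boundedness of this ratio does not follow from \eqref{result-u-s} alone, since numerator and denominator live at different points; one must write
\[
\frac{\partial_y^2 u^s(t',y')}{\partial_y u^s(t,y)}=\frac{\partial_y^2 u^s(t',y')}{\partial_y u^s(t',y')}\cdot\frac{\partial_y u^s(t',y')}{\partial_y u^s(t,y)},
\]
and invoke the shift estimates \eqref{cond-2}--\eqref{cond-3} for the second factor. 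Those estimates were proved precisely for this purpose, and your sketch never calls on them. Once you insert this step, the rest of your outline (Leibniz via Lemma~\ref{lemm2.1}, the $\theta^{-1}$ cancelling the derivative on the kernel, the extra uncompensated $\theta$ in \eqref{commutator-2}) goes through as in the paper.
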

\begin{proof}
This lemma can be proved in a classical way, cf. \cite{Piriou, hormander}.
 To be self-contained,
we give a brief proof of the estimate \eqref{commutator-1} here. Note that
 \eqref{commutator-2} can be proved similarly.

From the definition, we have
\begin{equation}\label{inden}
\begin{array}{ll}
&[\frac{1}{\partial_y u^s}, S_{\theta}]b=\frac{S_{\theta} (b)}{\partial_y u^s}(t, x, y)-S_{\theta}\left(\frac{b}{\partial_y u^s}\right)(t, x, y)\\
&=
\int\rho_{_\theta}(\tau)\rho_{_\theta}(\xi)\rho_{_\theta}(\eta)
\left(\frac{\partial_y u^s(t-\tau+\theta^{-1} , y-\eta+\theta^{-1})-\partial_y u^s(t, y)}{\partial_y u^s(y)}
\right)\\
&\qquad\qquad\qquad\times
\left(\frac{ \tilde{b}}{\partial_y u^s}\right)(t-\tau+\theta^{-1}, x-\xi, y-\eta+\theta^{-1}) d \tau d\xi d \eta
\\
&=\theta^{-1}
\int \rho_{_\theta}(\xi)\Big( \tilde{\rho}_{_\theta}(\tau)\rho_{_\theta}(\eta)a_1(t, \tau, y, \eta,\theta)+
\rho_{_\theta}(\tau)\tilde{\rho}_{_\theta}(\eta)a_2
(t, \tau, y, \eta,\theta) \Big)
\\
&\qquad\qquad\qquad\times\left(\frac{ \tilde{b}}{\partial_y u^s}\right)(t-\tau+\theta^{-1},
x-\xi, y-\eta+\theta^{-1}) d \tau d\xi d \eta,
\end{array}\end{equation}
with $\tilde{\rho}_{_\theta}(\tau)= \theta\,\rho(\theta\,\tau)(1-\theta\,\tau),
\tilde{\rho}_{_\theta}(\eta)= \theta\,\rho(\theta\,\eta)(1-\theta\,\eta) $, and
\begin{align*}
&a_1(t, \tau, y, \eta,\theta)
=\int^1_0\frac{{\partial_t\partial_y u^s(t+\lambda(\theta^{-1}-\tau),  y+\lambda(\theta^{-1}-\eta))}}{\partial_y u^s(t, y)}
d \lambda,
\end{align*}
and
\begin{align*}
&a_2(t, \tau, y, \eta,\theta)
=\int^1_0\frac{{\partial^2_y u^s(t+\lambda(\theta^{-1}-\tau),  y+\lambda(\theta^{-1}-\eta))}}{\partial_y u^s(t, y)}
d \lambda.
\end{align*}
Using \eqref{cond-2} and \eqref{cond-3}, for $j=1, 2$, we have
\begin{equation}\label{Comm}
\sup _{0\leq |\tau|, |\eta|\leq \theta^{-1}\leq R_0}\|a_j(\,\cdot\, , \tau,\,\cdot\, , \eta,\, \theta)\|_{{\cC}^{k+1}_{0}}\le \tilde{C}(T).
\end{equation}
Thus, from \eqref{inden} we get the estimate \eqref{commutator-1} immediately.
\end{proof}

\subsection{The iteration scheme}\label{s2.3}
Denote by
\begin{equation*}
\mathcal{P}(u, v)
=\partial_tu + u\partial_xu+v\partial_y u- \partial^2_yu,
\end{equation*}
the nonlinear operator associated with problem \eqref{eq-1}, and its linearized operator around
$(\tilde u, \tilde v)$ by
\begin{equation*}
\mathcal{P}'_{(\tilde u, \tilde v)}(u, v)=
 \partial_t u+ \tilde u\partial_x u + \tilde v \partial_y u+
 u\partial_x \tilde u
+ v \partial_y \tilde u- \partial^2_y u.
\end{equation*}

In this subsection, we introduce an iteration
scheme in order to construct
an approximate solution sequence $\{(u^n, v^n)\}$ to the
problem \eqref{eq-1}.

For a fixed integer $k\ge 0$, suppose that the
initial data in the Prandtl equation \eqref{eq-1}
satisfies the compatibility conditions up to order $k$,
and that $(u, v)$ is a classical solution.
If we set $\tilde{u}=u-u^s$ with $u^s(t,y)$ being the heat profile defined in
Section \ref{s2.2-1}, then it is easy to see that
\begin{equation}\label{tilde-u}
\begin{cases}
\partial_t \tilde{u}+(\tilde{u}+u^s)\tilde{u}_x+
v \partial_y(\tilde{u}+u^s)-\tilde{u}_{yy}=0, \quad (x, y)\in\RR^2_+,\,\,t> 0,\\
\partial_x\tilde{u}+\partial_yv=0, \\
\tilde{u}|_{y=0}=v|_{y=0}=0, \quad \lim\limits_{y\to +\infty}\tilde{u}=0,
\\
\tilde{u}|_{t=0}=\tilde{u}_0(x,y) .\end{cases}.
\end{equation}
The compatibility conditions for \eqref{tilde-u} follow from those for \eqref{eq-1} immediately.

\smallskip
\noindent
\underline{\it The zero-th order approximate solution}: Denote
$$
\tilde{u}^j_0(x,y)=\partial_t^j\tilde{u}|_{t=0}, \qquad v^j_0(x,y)=\partial_t^jv|_{t=0}.
$$
Then from the compatibility conditions for \eqref{tilde-u},  $\{\tilde{u}^j_0, v_0^j\}_{j\le k}$
are defined directly by $\tilde{u}_0(x,y)$. We are going to construct the zero-th order
approximate solution $(\tilde{u}^0, v^0)$ of \eqref{tilde-u}, such that
$$
\partial_t^j\tilde{u}^0|_{t=0}=\tilde{u}^j_0(x,y), \qquad \partial_t^j
v^0|_{t=0}=v^j_0(x,y),\quad 0\le j\le k,
$$
and $(u^0, v^0)=
(u^s+\tilde{u}^0,  v^0)$ satisfying
\begin{equation}\label{eq-first}
\begin{cases}
\partial_xu^0+\partial_yv^0=0, \quad (x, y)\in\RR^2_+,\,\,t\ge 0, \\
u^0|_{y=0}=v^0|_{y=0}=0, \lim\limits_{y\to+\infty}u^0=1,\\
u^0|_{t=0}=u_0(x,y)\,.\end{cases}.
\end{equation}
Other properties of $(u^0, v^0)$ will be studied in more details in  Section \ref{s5.1}.

\smallskip
\noindent
\underline{\it The Nash-Moser iteration scheme}:
Assume that for all $k=0, \ldots, n$, we have constructed the approximate solutions
$(u^{k}, v^{k})$ of \eqref{eq-1} satisfying the same conditions  given in
\eqref{eq-first} for $(u^0, v^0)$. We now construct the $(n+1)$-th approximation
solution $(u^{n+1}, v^{n+1})$ as follows. Set
\begin{equation}\label{u-n+1}
u^{n+1}=u^{n}+\delta u^{n}=u^s+\tilde{u}^{n}+\delta u^{n},
\qquad v^{n+1}=v^{n}+\delta v^{n}\,,
\end{equation}
where the increment $(\delta u^{n}, \delta v^{n})$ is the solution of the following initial-boundary value problem,
\begin{equation}\label{prob-u-n}
\begin{cases}
\mathcal{P}'_{(u^{n}_{\theta_n},
v^{n}_{\theta_n})}(\delta u^{n}, \delta v^n)
=f^n, \cr
\partial_x (\delta u^n)+\partial_y (\delta v^n)=0, \cr
\delta u^n|_{y=0}=\delta v^n|_{y=0}=0, \qquad \lim\limits_{y\to+\infty}\delta u^n=0, \cr
\delta u^{n}|_{t\le 0}=0,
\end{cases}
\end{equation}
where  $u^n_{\theta_n}=u^s+S_{\theta_n} \tilde{u}^n$ and $v^n_{\theta_n}=S_{\theta_n}v^n$.

Now, we define the source term $f^n$ for the problem  \eqref{prob-u-n} in order to have
the convergence of the approximate solution sequence $(u^n, v^n)$ to the solution of
the Prandtl equation \eqref{eq-1} as $n$ goes to infinity.
Obviously, we have the following identity,
\begin{equation}\label{identity}
\mathcal{P}(u^{n+1}, v^{n+1})-\mathcal{P}(u^{n}, v^{n})
=\mathcal{P}'_{(u^{n}_{\theta_n}, v^{n}_{\theta_n})}(\delta u^{n},
\delta v^{n})+e_{n}\,,
\end{equation}
where
$$
e_{n}=e_{n}^{(1)}+e_{n}^{(2)}.
$$
Here
$$
\begin{array}{ll}
e^{(1)}_n&=\mathcal{P}(u^{n}+\delta u^n, v^{n}+\delta
v^n)-\mathcal{P}(u^n, v^n)
-\mathcal{P}'_{(u^n, v^n)}(\delta u^n, \delta v^n)\\
&=\delta u^n \partial_x(\delta u^n)+\delta v^n \partial_y (\delta u^n),
\end{array}
$$
is the error from the Newton iteration scheme, and
$$
\begin{array}{ll}
e_{n}^{(2)}& =\mathcal{P}'_{(u^n, v^n)}(\delta u^n, \delta v^n)
-\mathcal{P}'_{(u^n_{\theta_n}, v^n_{\theta_n})}(\delta u^n, \delta
v^n)\\
&=\big((1-S_{\theta_n})
(u^n-u^s)\big) \partial_x(\delta u^n)+
\delta u^n \partial_x \big((1-S_{\theta_n})(u^n-u^s)\big)\\[2mm]
&\qquad+ \delta v^n \partial_y \big((1-S_{\theta_n}) (u^n-u^s)\big)
+\big((1-S_{\theta_n}) v^n\big) \partial_y(\delta u^n),
\end{array}
$$
is the error coming from mollifying the coefficients.

{}From \eqref{identity}, we have
\begin{equation}\label{identity-1}
\mathcal{P}(u^{n+1}, v^{n+1})=\sum_{j=0}^n( \mathcal{P}'_{
(S_{\theta_j}u^{j}, S_{\theta_j}v^{j})}(\delta u^{j}, \delta v^j)+{e}_{j})+{f}^a,
\end{equation}
with
$$
{f}^a=\mathcal{P}(u^0, v^0):=\partial_t u^0 + u^0
\partial_x u^0 + v^0\partial_y u^0- \partial^2_y u^0.
$$

Note that if the approximate solution $(u^{n}, v^{n})$ converges to a solution of the problem (\ref{eq-1}), then the right hand
side in the equation (\ref{identity-1}) should go to zero when $n\to +\infty$. Thus, it is
natural to require that $(\delta u^n, \delta v^n)$ satisfies
the following equation for all $n\ge 0$,
$$
\mathcal{P}'_{(u^{n}_{\theta_n},
v^{n}_{\theta_n})}(\delta u^{n}, \delta v^n)
=f^n,
$$
where $f^n$ is defined by
$$
\sum_{j=0}^nf^j=-S_{\theta_n}(\sum_{j=0}^{n-1}e_{
j})-S_{\theta_n}{f}^a,
$$
by induction on $n$. Obviously, we have
\begin{equation}\label{fn-1}
\begin{cases}
f^0=-S_{\theta_0}{f}^a, \qquad f^1=
(S_{\theta_0}-S_{\theta_1}) {f}^a+S_{\theta_1} e_0,\\[2mm]
f^n=(S_{\theta_{n-1}}-S_{\theta_n})\big(\sum\limits^{n-2}_{j=0}
e_j\big)-S_{\theta_n} e_{n-1}+
(S_{\theta_{n-1}}-S_{\theta_n}){f}^a, \quad \forall n\ge 2.
\end{cases}
\end{equation}


\section{Existence of the classical solutions} 
\label{section5}
\setcounter{equation}{0}

In this section, we study the iteration scheme \eqref{u-n+1}-\eqref{prob-u-n} with $f^n$ being given in \eqref{fn-1},
by using the estimate  \eqref{energy-A} given in Theorem \ref{estimate-w}. To do this, let us first state the main
assumption (MA) on the initial data $\tilde{u}_0(x,y)$ of \eqref{tilde-u} as follows:
\begin{quote}
{\bf (MA)} For any fixed integers $\tilde{k}\ge 7$, $k_0\ge \tilde{k}+2$,  and a real number $\ell>\frac{1}{2}$, suppose that $\tilde{u}_0\in \cA^{2k_0+1}(\RR_+^2)$ satisfies the compatibility conditions for the problem \eqref{tilde-u} up to order $k_0$, and
$$\|\tilde{u}_0\|_{\cA^{2k_0+1}_{\ell}(\RR_+^2)}+
\|\frac{\partial_y\tilde{u}_0}{\partial_yu^s_0}\|
_{\cA^{2k_0+1}_{\ell}(\RR_+^2)}\le \epsilon,$$
for a small quantity $\epsilon>0$ depending on the norms of $u_0^s(y)$.
\end{quote}


\subsection{The zero-th order approximation} \label{s5.1}

Let us construct the zero-th order approximate solution $(u^0, v^0)$ satisfying
\eqref{eq-first} to the problem \eqref{eq-1}. As  mentioned in  Section  \ref{s2.3}, from
the equation \eqref{tilde-u} one can easily obtain
$\tilde{u}^j_0(x,y)=\partial_t^j\tilde{u}(0, x,y)$ and  $v^j_0(x,y)=\partial_t^jv(0,x,y)
$
in terms of $\tilde{u}_0(x,y)$ for all $0\leq j\leq k_0$, and then
have the following relations
\begin{equation}\label{compatibility-b}
\left\{\begin{array}{l}
u^j_0(x,y)=\partial^2_yu^{j-1}_0-\sum\limits_{k=0}^{j-1}
C^k_{j-1}
\Big(u^k_0\partial_xu^{j-1-k}_0+v^k_0\partial_y u^{j-1-k}_0\Big),\\
v^j_0(x,y)=-\int_0^y\partial_xu^j_0(x,\xi)d\xi,
\end{array}\right.
\end{equation}
by induction on $j$, with $u^j_0(x,y)=\tilde{u}^j_0(x,y)+(\partial_t^j u^s)(0,y)$.

To construct $(\tilde{u}^0, v^0)$ satisfying
$$
\partial_t^j\tilde{u}^0|_{t=0}=\tilde{u}^j_0(x,y), \qquad \partial_t^j
v^0|_{t=0}=v^j_0(x,y),\quad 0\le j\le k_0,
$$
we can simply define
\begin{equation}\label{u-a-1}
\tilde{u}^0(t,x,y)=\sum_{j=0}^{k_0}\frac{t^j}{j!}\tilde{u}^j_0(x,y),
\quad
v^0(t,x,y)=\sum_{j=0}^{k_0}\frac{t^j}{j!}v^j_0(x,y)\, .
\end{equation}

For this approximate solution,  we have

\begin{lemm}\label{u-a} Under the assumption (MA),
 for any fixed $T>0$, there is a constant $C=C(k_0, T)$ depends only on $k_0$ and $T$ such that,
\begin{equation}\label{u-a-2}
\|\tilde{u}^0\|_{\tilde{{\cA}}^{k_0+1}_{\ell}([0,T]\times \RR_+^2)}\le C\epsilon,
\quad \|v^0\|_{\cD^{k_0}([0,T]\times \RR_+^2)}\le C\epsilon,
\end{equation}
and
\begin{equation}\label{u-a-2-1}
\|{f}^a\|_{\tilde{\cA}^{k_0}_{\ell}([0,T]\times \RR_+^2)}\le C\epsilon.
\end{equation}
Here, we have used the notations
$$\|\tilde{u}^0\|_{\tilde{{\cA}}^{k}_{\ell}([0,T]\times \RR_+^2)}:=
\sum_{j=0}^{k}\|\tilde{u}^0\|_{W^{j,\infty}(0,T;\cA^{k-j}
_{\ell}(\RR_+^2))},$$
and
$$
{f}^a=\partial_t \tilde{u}^0+(\tilde{u}^0+u^s)\partial_x\tilde{u}^0+
v^0 \partial_y(\tilde{u}^0+u^s)-\partial_y^2\tilde{u}^0.
$$
\end{lemm}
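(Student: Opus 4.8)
The plan is to verify the three estimates \eqref{u-a-2}--\eqref{u-a-2-1} essentially by unwinding the explicit polynomial-in-$t$ definition \eqref{u-a-1} of $(\tilde u^0, v^0)$ and tracking how the Sobolev norms of the time-derivatives $\tilde u^j_0, v^j_0$ at $t=0$ are controlled by $\|\tilde u_0\|_{\cA^{2k_0+1}_\ell}$ under the assumption (MA). First I would establish, by induction on $j$ using the recursion \eqref{compatibility-b}, a bound of the form
\begin{equation*}
\|\langle y\rangle^\ell \tilde u^j_0\|_{\cA^{2k_0+1-2j}_\ell(\RR^2_+)} + \|v^j_0\|_{\cD^{2k_0-2j}(\RR^2_+)} \le C(k_0)\,\epsilon,\qquad 0\le j\le k_0,
\end{equation*}
using at each step: the fact that $\partial^2_y$ costs two $y$-derivatives (hence drops the $\cA$-index by one, consistent with the anisotropic counting in the definition of $\cA^k_\ell$); the Morse/product inequality of Lemma \ref{lemm2.1} to handle the quadratic terms $u^k_0\partial_x u^{j-1-k}_0$ and $v^k_0\partial_y u^{j-1-k}_0$ (splitting each factor into an $L^\infty$-part controlled by a lower-index norm via the Sobolev embedding \eqref{sob-C} and a high-index part); and, for the $v^j_0$ bound, the fact that $v^j_0 = -\int_0^y \partial_x u^j_0\,d\xi$ together with the $\cD$-space estimate — here one uses $\ell>\tfrac12$ so that $\langle y\rangle^{-\ell}\in L^2_y$ near infinity is not needed, rather that the $\cD^k$ norm (an $L^\infty_y L^2_{t,x}$ norm) of the antiderivative is controlled by the $\cA$-norm of the integrand. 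The known smoothness and boundedness of the shear profile $u^s$ from \eqref{result-u-s} (the Proposition in Section \ref{s2.2-1}) is used to absorb the contributions of $(\partial_t^j u^s)(0,y)$ into the constant $C(k_0,T)$ rather than into $\epsilon$; note $u^j_0 = \tilde u^j_0 + (\partial_t^j u^s)(0,y)$, and in the products the ``pure shear flow $\times$ shear flow'' pieces cancel because $(u^s,0)$ solves the Prandtl equation, so only mixed and purely-$\tilde u_0$ terms remain and these carry a factor of $\epsilon$.

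Next, given these $t=0$ bounds, the estimates \eqref{u-a-2} on the finite Taylor sums \eqref{u-a-1} are immediate: differentiating the polynomial $\tilde u^0(t,x,y)=\sum_{j=0}^{k_0}\frac{t^j}{j!}\tilde u^j_0(x,y)$ in $t$ a total of $m$ times and estimating the resulting finite sum over $[0,T]$ gives a bound by $\sum_j \frac{T^{j-m}}{(j-m)!}\|\tilde u^j_0\|_{\cA^{k_0+1-(j-m)}_\ell}$, which the inductive estimate controls by $C(k_0,T)\epsilon$ since $k_0+1-(j-m)\le k_0+1\le 2k_0+1-2j$ for the relevant ranges (one should check the index bookkeeping, but it is comfortable because we proved the stronger $\cA^{2k_0+1-2j}$ bound at $t=0$, losing a full factor of two in the index budget precisely to feed the $W^{j,\infty}_t$ structure of the $\tilde\cA$-norm). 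The bound for $v^0$ in $\cD^{k_0}$ follows the same way from the $v^j_0$ bounds.

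Finally, for \eqref{u-a-2-1} I would substitute \eqref{u-a-1} into the definition ${f}^a = \partial_t\tilde u^0 + (\tilde u^0+u^s)\partial_x\tilde u^0 + v^0\partial_y(\tilde u^0 + u^s) - \partial_y^2\tilde u^0$ and observe that, by construction of $\tilde u^j_0, v^j_0$ via \eqref{compatibility-b}, all time-derivatives of ${f}^a$ up to order $k_0-1$ vanish at $t=0$; hence ${f}^a(t,x,y) = O(t^{k_0})$ with coefficients that are themselves controlled, via Lemma \ref{lemm2.1} and the $\epsilon$-smallness of the $\tilde u^j_0$'s, in $\cA^{k_0}_\ell$ — giving $\|{f}^a\|_{\tilde\cA^{k_0}_\ell([0,T]\times\RR^2_+)}\le C(k_0,T)\epsilon$. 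The main obstacle, and the place where care is required rather than cleverness, is the index accounting in the inductive step: one must confirm that every application of $\partial^2_y$, every product split, and every antiderivative in $y$ stays within the $2k_0+1$ derivative budget while preserving the full factor-of-two margin needed so that the truncated Taylor polynomial lands in $\tilde\cA^{k_0+1}_\ell$ (resp.\ $\tilde\cA^{k_0}_\ell$ for ${f}^a$); the quadratic nonlinearity is what consumes the margin, and tracking it through $k_0$ iterations of \eqref{compatibility-b} is the bulk of the work.
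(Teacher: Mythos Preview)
Your overall strategy matches the paper's exactly: induct on $j$ through the recursion \eqref{compatibility-b} to control $\tilde u^j_0$ and $v^j_0$, then read off the estimates for the Taylor polynomials \eqref{u-a-1}. The gap is in the index bookkeeping, which you flag yourself but resolve incorrectly. Each step of the recursion loses only \emph{one} unit of $\cA$-regularity, not two: the term $\partial_y^2\tilde u^{j-1}_0$ costs one (as you note), and the nonlinear terms $u^k_0\partial_x u^{j-1-k}_0$, $v^k_0\partial_y u^{j-1-k}_0$ also cost exactly one, since the single $\partial_x$ or $\partial_y$ hits the factor of highest index and the Morse inequality of Lemma~\ref{lemm2.1} costs nothing further. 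The paper accordingly proves $\tilde u^j_0\in\cA^{2k_0+1-j}_\ell$ and $v^j_0\in\cD^{2k_0-j}$, not your $\cA^{2k_0+1-2j}_\ell$.

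This is not cosmetic: your weaker bound is insufficient. In the $\tilde\cA^{k_0+1}_\ell$ norm, the summand with $m=0$ requires $\tilde u^0\in L^\infty(0,T;\cA^{k_0+1}_\ell)$, so every coefficient $\tilde u^j_0$ for $0\le j\le k_0$ must lie in $\cA^{k_0+1}_\ell$. With the sharp bound this is met on the nose at $j=k_0$ (since $2k_0+1-k_0=k_0+1$), but your bound gives only $\cA^1_\ell$ there. Your verification ``$k_0+1-(j-m)\le k_0+1\le 2k_0+1-2j$'' has two errors: the required $\cA$-index after $m$ time derivatives is $k_0+1-m$, not $k_0+1-(j-m)$; and the second inequality fails for $j>k_0/2$. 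Once you replace $2k_0+1-2j$ by $2k_0+1-j$ throughout, the rest of your argument (including the treatment of $f^a$) goes through as written.
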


\begin{proof}
Since $\tilde{u}_0\in \cA_{\ell}^{2k_0+1}(\RR_+^2)$, it follows immediately that
$$
v_0(x,y)=-\int_0^y\partial_x\tilde{u}_0(x,\eta)d\eta\in \cD^{2k_0}(\RR_+^2),
$$
which implies
$$
\tilde{u}_0^1=-(\tilde{u}_0+u_0^s)\partial_x\tilde{u}_0-
v_0 \partial_y(\tilde{u}_0+u_0^s)+\partial_y^2\tilde{u}_{0}
\in \cA_{\ell}^{2k_0}(\RR_+^2),
$$
and
$$
v_0^1(x,y)=-\int_0^y\partial_x\tilde{u}^1_0(x,\eta)d\eta\in \cD^{2k_0-1}(\RR_+^2).
$$

In this way, using \eqref{compatibility-b} and by induction on $j$ we can deduce
$$
\tilde{u}_0^j\in \cA_{\ell}^{2k_0+1-j}(\RR_+^2), \qquad v_0^j\in \cD^{2k_0-j}(\RR_+^2),
$$
for all $j\le k_0$, and
$$
\|\tilde{u}_0^j\|_{\cA_{\ell}^{2k_0-j+1}(\RR_+^2)}\le C(j)\epsilon,
$$
for a constant $C(j)$ depending only on $j$. Then,  \eqref{u-a-2} and \eqref{u-a-2-1}
follow immediately from the construction \eqref{u-a-1}.
\end{proof}

\begin{rema}
Denoting $u^0=\tilde{u}^0+u^s$, it is easy to see that
$(u^0, v^0)$ is an approximate solution to the original problem \eqref{eq-1} satisfying
\begin{equation}\label{f-a}
\begin{cases}
u_t^0+u^0u^0_x+v^0 u^0_y-u^0_{yy}=f^a,
\quad t>0, \quad (x, y)\in\RR^2_+,\\
\partial_xu^0+\partial_yv^0=0, \quad (x, y)\in\RR^2_+,\,\,t\ge 0, \\
(u^0, v^0)|_{y=0}=0, \quad \lim\limits_{y\to +\infty} u^0=1,\\
u^0|_{t=0}=u_0(x,y),\end{cases}
\end{equation}
with $\partial_t^j{f}^a|_{t=0}=0$ for all $0\le j\le k_0-1$.
\end{rema}

\begin{lemm}
Under the assumptions (MA), for any fixed~ $T>0$, there is a constant $C(k, T)$ such that,
\begin{equation}\label{f-a-1}
\|\frac{\partial_y\tilde{u}^0}{\partial_yu^s}\|_{\cA^{k_0+1}_{\ell}([0,T]\times \RR_+^2)}
+\|\frac{{f}^a}{\partial_yu^s}\|_{\cA^{k_0}_{\ell}([0,T]\times \RR_+^2)}\le C\epsilon.
\end{equation}
\end{lemm}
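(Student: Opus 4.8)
The plan is to deduce \eqref{f-a-1} from Lemma \ref{u-a} together with the commutator estimate \eqref{commutator-1} and the Morse-type inequality of Lemma \ref{lemm2.1}. The starting point is the observation that by the monotonicity property \eqref{mono} and the bounds \eqref{result-u-s}, the function $1/\partial_y u^s$ and its derivatives are under control on $[0,T]\times\RR_+^2$; in particular, multiplication by $1/\partial_y u^s$ (and by $\alpha=\partial_y^2 u^s/\partial_y u^s$) is a bounded operation on the spaces $\cA^{k_0}_\ell$ and $\cA^{k_0+1}_\ell$ in the sense of Lemma \ref{lemm2.1}. So the crux is purely that we are dividing by $\partial_y u^s$, which is harmless.

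First I would write $\dfrac{\partial_y\tilde u^0}{\partial_y u^s}=\dfrac{1}{\partial_y u^s}\,\partial_y\tilde u^0$ and apply Lemma \ref{lemm2.1} to get
\begin{equation*}
\Big\|\frac{\partial_y\tilde u^0}{\partial_y u^s}\Big\|_{\cA^{k_0+1}_\ell}
\le M_{k_0+1}\Big(\big\|\tfrac{1}{\partial_y u^s}\big\|_{\cA^{k_0+1}_0}\,\|\partial_y\tilde u^0\|_{L^\infty}
+\big\|\tfrac{1}{\partial_y u^s}\big\|_{L^\infty}\,\|\partial_y\tilde u^0\|_{\dot\cA^{k_0+1}_\ell}\Big),
\end{equation*}
and then bound $\|\partial_y\tilde u^0\|_{L^\infty}$ and $\|\partial_y\tilde u^0\|_{\dot\cA^{k_0+1}_\ell}$ by $\|\tilde u^0\|_{\tilde\cA^{k_0+1}_\ell}\lesssim\epsilon$ via \eqref{sob-C} and \eqref{u-a-2} (note $\partial_y$ loses at most one $y$-derivative, which the index $k_0+1$ accommodates since $[\frac{k_2+1}{2}]$ grows only every two $y$-derivatives; one should double-check the exponent bookkeeping, using that $\tilde u^0$ is a finite Taylor polynomial in $t$ so time derivatives cost nothing). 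The $\cA^{k_0+1}_0$-norm of $1/\partial_y u^s$ is finite by \eqref{result-u-s} and the chain rule, since derivatives of $1/\partial_y u^s$ are polynomial expressions in $\alpha$ and its derivatives times $1/\partial_y u^s$, all controlled. This handles the first term of \eqref{f-a-1}. For the second term, I would argue the same way on $\dfrac{f^a}{\partial_y u^s}=\dfrac{1}{\partial_y u^s}f^a$, invoking \eqref{u-a-2-1} for $\|f^a\|_{\tilde\cA^{k_0}_\ell}\lesssim\epsilon$; alternatively, recalling the explicit formula for $f^a$ and the equivalence of the estimate with bounding $f^a/\partial_y u^s$ term by term using Lemma \ref{lemm2.1} on each product.

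The one point that needs care — and is the main obstacle, such as it is — is the weight and index accounting: the spaces $\cA^k_\ell$ weight only the normal variable, and the factor $1/\partial_y u^s$ carries no decay, so one must make sure that multiplying a weighted function by the unweighted multiplier $1/\partial_y u^s$ stays in $\cA^{k_0+1}_\ell$; this is exactly the content of the second Morse inequality in Lemma \ref{lemm2.1} with the multiplier placed in an $\ell=0$ (or $L^\infty$) slot, so no decay is lost. I would also double-check that $k_0$ derivatives of $f^a$ and $k_0+1$ derivatives of $\tilde u^0$ do not exceed the regularity $\cA^{2k_0+1}_\ell$ available for $\tilde u_0$ after passing through \eqref{compatibility-b}; Lemma \ref{u-a} already records exactly this, so the proof reduces to quoting Lemma \ref{u-a} and Lemma \ref{lemm2.1}. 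Everything else is routine.
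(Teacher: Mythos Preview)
Your proposal has a genuine gap: the multiplier $1/\partial_y u^s$ is \emph{not} bounded, so the Morse-type splitting you write down is vacuous. Recall that $u^s(t,y)\to 1$ as $y\to+\infty$, hence $\partial_y u^s(t,y)\to 0$; indeed, the standing assumption $\langle y\rangle^\ell(u_0^s-1)\in H^{2k+9}$ forces $\partial_y u^s$ to decay at least polynomially in $y$. Consequently $\|1/\partial_y u^s\|_{L^\infty}=+\infty$ and $\|1/\partial_y u^s\|_{\cA^{k_0+1}_0}=+\infty$, so the inequality you derive from Lemma~\ref{lemm2.1} has an infinite right-hand side. The estimate \eqref{result-u-s} controls only the ratio $\alpha=\partial_y^2 u^s/\partial_y u^s$, not $1/\partial_y u^s$ itself; your chain-rule remark that derivatives of $1/\partial_y u^s$ are polynomials in $\alpha$ times $1/\partial_y u^s$ is correct, but that last factor is exactly the unbounded one.

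The reason the estimate \eqref{f-a-1} is nevertheless true is that (MA) supplies a bound on the \emph{ratio} at $t=0$, namely $\|\partial_y\tilde u_0/\partial_y u_0^s\|_{\cA^{2k_0+1}_\ell}\le\epsilon$; this is the input you never invoke. The paper's intended argument (``in a way similar to the proof of Lemma~\ref{u-a}'') is to run the same induction on the Taylor coefficients $\tilde u_0^j$ but now for the quantities $\partial_y\tilde u_0^j/\partial_y u^s$ (and $f^a/\partial_y u^s$ term by term), propagating the smallness of the ratio through the recursion \eqref{compatibility-b}; at each step one uses the control on $\alpha$ and on ratios like $\partial_y u^s(t,\cdot)/\partial_y u^s(0,\cdot)$ from \eqref{cond-2}--\eqref{cond-3}, never the bare $1/\partial_y u^s$. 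In short, you must treat the quotient as a single object from the outset rather than split it into numerator and denominator.
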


This result can be proved in a way similar to the proof of Lemma \ref{u-a}, so we omit it for brevity.

\begin{rema}
From the smallness  of the first term given in \eqref{f-a-1}, it is easy to see that for a fixed $T>0$,
there is a small $\epsilon$ such that when the conditions of Lemmas \ref{u-a} and \ref{f-a} hold,
we have the monotonicity
$$
\partial_y u^0(t, x, y)>0,\qquad (t, x, y)\in [0, T]\times\RR^2_+.
$$
\end{rema}

\subsection{Estimates of the approximate solutions}

Obviously, the problem \eqref{prob-u-n} can be written as
 \begin{equation}\label{iteration-n}
\left\{\begin{array}{l}
\partial_t (\delta u^n)+ u^n_{\theta_n}\partial_x
(\delta u^n) +v^n_{\theta_n}
\partial_y (\delta u^n)+\delta u^n\partial_x (u^n_{\theta_n})\\
\quad+ \delta v^n \partial_y (u^n_{\theta_n})- \partial^2_y (\delta u^n)=f^n,\\
\partial_x(\delta u^n) + \partial_y(\delta v^n) =0,\\
\delta u^n|_{y=0}=\delta v^n|_{y=0} =0,
\quad \lim\limits_{y\to+\infty}\delta u^n=0,\\
\delta u^n|_{t=0}=0\, ,
\end{array}\right.
\end{equation}
where
$$
u^n_{\theta_n}=u^s+S_{\theta_n}(\tilde{u}^0+\sum_{0\leq j\leq n-1}\delta u^j), \quad
v^n_{\theta_n}=S_{\theta_n}(v^0+\sum_{0\leq j\leq n-1}\delta v^j).
$$

Set
$$
w^n=\partial_y\left(\frac{\delta u^n}{\partial_y u^n_{\theta_n}}\right).
$$
As in  Section \ref{s3}, from \eqref{iteration-n}, we know that $w^n$ satisfies
\begin{equation}\label{w-n}
\left\{\begin{array}{l}
\partial_t w^n+ \partial_x(u^n_{\theta_n} w^n) +
\partial_y(v^n_{\theta_n} w^n)-2\partial_y(\eta^n w^n)
\\ \qquad
+ \partial_y \left(\zeta^n\int^y_0w^n(t, x, \tilde{y})
d\tilde{y}\right)- \partial^2_y w^n=\partial_y \tilde{f}^n,\\
\big(\partial_yw^n+2\eta^n w^n\big)\big|_{y=0}= -\tilde{f}^n\big|_{y=0} ,\\
w^n|_{t=0}=0\, ,
\end{array}\right.
\end{equation}
where
$$
\eta^n=\frac{\partial^2_y u^n_{\theta_n}}{\partial_y u^n_{\theta_n}},\quad \quad
\zeta^n=\frac{(\partial_t+u^n_{\theta_n} \partial_x+
 v^n_{\theta_n}\partial_y -\partial^2_y)
 \partial_y u^n_{\theta_n}}{\partial_y u^n_{\theta_n}},
$$
and
\begin{equation}\label{tilde-f-n}
\tilde{f}^n=\frac{f^n}{\partial_y u^n_{\theta_n}}=
 \frac{(S_{\theta_{n-1}}-S_{\theta_n})\big(\sum^{n-2}_{j=0} e_j
 \big)-S_{\theta_n} e_{n-1}+
(S_{\theta_{n-1}}-S_{\theta_n}){f}^a}{\partial_y u^n_{\theta_n}}.
\end{equation}

From the above main assumption (MA) and the construction of the  approximate
solution $(\tilde{u}^0, v^0)$ to the problem \eqref{tilde-u},  it is easy to
show by induction on $n$ that the compatibility conditions for the problem
\eqref{w-n} up to order $k_0$ hold for all $n\ge 0$.

Similar to Section \ref{s3},  set
\begin{eqnarray*}
\lambda^n_{k_1,k_2}&=&\|u^n_{\theta_n} -u^s\|_{\mathcal{B}_{0,0}^{k_1,k_2}}
+\|\partial^{k_1}_{\mathcal T}\partial^{k_2}_yu^s\|_{L^2_y(L^\infty_t)}
+\|\partial^{k_1}_{\mathcal T}\partial^{k_2}_yv^n_{\theta_n}\|_{L^\infty_y(L^2_{t,x})}\\ & &
+\|\partial^{k_1}_{\mathcal T}\partial^{k_2}_y\bar\eta^n
\|_{L^2_y(L^\infty_{t,x})}  +\|\eta^n-\bar\eta^n
\|_{\mathcal{B}_{0,0}^{k_1,{k_2}}} +\|\zeta^n
\|_{\mathcal{B}_{0,l}^{k_1,{k_2}}},\end{eqnarray*}
with $\bar{\eta}^n=\frac{\partial_y^2u^s}{\partial_yu_{\theta_n}^n}$, and
\begin{eqnarray*}
\lambda^n_k&=&\sum_{k_1+[\frac{k_2+1}{2}]\le k} \lambda^n_{k_1,k_2}.
\end{eqnarray*}
That is,
\begin{eqnarray}\label{lambda-n}
\lambda^n_k & = & \|u^n_{\theta_n} -u^s\|_{\mathcal{A}_{0}^{k}}
+\|u^s\|_{\mathcal{C}_{0}^{k}}
+\|v^n_{\theta_n}\|_{\mathcal{D}_{0}^{k}}\\[2mm]
&&+\|\bar\eta^n
\|_{\mathcal{C}_{0}^{k}} +\|\eta^n-\bar\eta^n
\|_{\mathcal{A}_{0}^{k}} +\|\zeta^n
\|_{\mathcal{A}_{l}^{k}}\notag.
\end{eqnarray}

By applying Theorem \ref{estimate-w} to the problem \eqref{w-n}, we have

\begin{prop} \label{prop-w-n}
Under the  main assumption (MA), the solution $w^n$ to the
problem \eqref{w-n} satisfies
\begin{equation}\label{energy-A-1}
\|w^n\|_{\mathcal{A}^k_{ \ell}} \le C_1(\lambda^n_3)
\|\tilde{f}^n\|_{\mathcal{A}^k_{\ell}}
+C_2(\lambda^n_3)\lambda^n_k\|\tilde{f}^n\|_{\mathcal{A}^{3}_{\ell}},
\end{equation}
where $C_1(\lambda^n_3), C_2(\lambda^n_3)$ are polynomials of
$\lambda^n_3$ of order  less or equal to $k$.
\end{prop}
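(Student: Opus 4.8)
The plan is to apply Theorem \ref{estimate-w} directly to the problem \eqref{w-n}, which is precisely of the form \eqref{eq2-2} with background state $(u^n_{\theta_n}, v^n_{\theta_n})$ in place of $(\tilde u, \tilde v)$, source term $\tilde f^n$ in place of $\tilde f$, and the coefficients $\eta^n, \zeta^n, \bar\eta^n$ playing the roles of $\eta, \zeta, \bar\eta$. Thus the only thing to verify is that the hypotheses of Theorem \ref{estimate-w} are met for each $n$, namely: (a) the compatibility conditions of \eqref{w-n} hold up to order $k$, and (b) the quantities $\lambda^n_{k_1,k_2}$ and $\lambda^n_k$ built from this background state are finite, so that the conclusion \eqref{energy-A} can be transcribed verbatim as \eqref{energy-A-1} with $\lambda_3, \lambda_k$ replaced by $\lambda^n_3, \lambda^n_k$.

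First I would check the compatibility conditions. As remarked immediately after the statement of Theorem \ref{estimate-w}, the compatibility conditions for \eqref{eq2-2} follow from those for the linearized problem \eqref{eq2-1}; and by the paragraph after \eqref{tilde-f-n}, the main assumption (MA) together with the construction of $(\tilde u^0, v^0)$ in Section \ref{s5.1} already guarantees, by induction on $n$, that the compatibility conditions for \eqref{w-n} hold up to order $k_0 \geq k$. So this step is essentially a citation of facts established earlier. Next, I would note that $(u^n_{\theta_n}, v^n_{\theta_n})$ is a genuine admissible background state for the linearized operator: it is smooth (being the image of $\tilde u^0 + \sum \delta u^j$ under the smoothing operator $S_{\theta_n}$, plus the smooth heat profile $u^s$), it satisfies the divergence-free condition $\partial_x(u^n_{\theta_n}) + \partial_y(v^n_{\theta_n}) = 0$ because $S_{\theta_n}$ commutes with $\partial_x, \partial_y$ and the increments do, and it inherits the monotonicity $\partial_y u^n_{\theta_n} > 0$ from the smallness built into (MA) — this last point is the analogue of the remark after \eqref{f-a-1} and uses that $S_{\theta_n}$ is bounded on the relevant weighted spaces together with the Sobolev embeddings \eqref{sob-C}. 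With $\partial_y u^n_{\theta_n}$ bounded below, the coefficients $\eta^n = \partial_y^2 u^n_{\theta_n}/\partial_y u^n_{\theta_n}$, $\zeta^n$, $\bar\eta^n$ are well-defined and the norms $\lambda^n_{k_1,k_2}$ are finite.

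Once these verifications are in place, the conclusion is immediate: Theorem \ref{estimate-w}, applied with $\ell > 1/2$ exactly as in its hypothesis, yields
\begin{equation*}
\|w^n\|_{\mathcal{A}^k_{\ell}} \le C_1(\lambda^n_3)\|\tilde f^n\|_{\mathcal{A}^k_{\ell}} + C_2(\lambda^n_3)\lambda^n_k\|\tilde f^n\|_{\mathcal{A}^3_{\ell}},
\end{equation*}
with $C_1, C_2$ polynomials of degree at most $k$ in their argument — precisely because the constants $C_1(\lambda_3), C_2(\lambda_3)$ in \eqref{energy-A} have that structure and the proof of Theorem \ref{estimate-w} (the inductive reduction in $k_2$ combined with Lemma \ref{prop3.1} and the tangential-derivative estimate) is insensitive to which admissible background state is used. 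So this proposition is really a corollary: the substantive work has already been done in Section \ref{s3}. The only genuine (if minor) obstacle is confirming the uniform-in-$n$ admissibility of the mollified background state, i.e. that $\partial_y u^n_{\theta_n}$ stays strictly positive; this is where one must invoke the smallness of $\epsilon$ in (MA) and the boundedness of $S_{\theta_n}$ on $\cA^s_\ell$ — but note that \eqref{energy-A-1} itself makes no uniformity claim on the constants, so even a crude bound suffices here, with the uniform control of $\lambda^n_k$ deferred to the subsequent estimates of the iteration scheme.
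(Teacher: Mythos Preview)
Your proposal is correct and matches the paper's approach exactly: the paper presents this proposition as an immediate consequence of Theorem~\ref{estimate-w} applied to the problem~\eqref{w-n}, with no separate proof given beyond the one-line remark ``By applying Theorem~\ref{estimate-w} to the problem~\eqref{w-n}, we have''. Your additional verifications (compatibility, divergence-freeness of the mollified state, positivity of $\partial_y u^n_{\theta_n}$) are things the paper leaves implicit; in particular, your observation that the monotonicity check and uniform control of $\lambda^n_k$ are handled inductively later in the scheme --- so that here only a qualitative bound is needed --- is exactly how the paper organizes the argument.
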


The key step in proving the convergence of the Nash-Moser-H\"ormander iteration scheme
\eqref{u-n+1}-\eqref{prob-u-n} is given by the following result.

\begin{theo}\label{theo-wn}
Under the main assumption (MA),
there exists a positive constant $C_0$, such that
\begin{align}\label{wn}
\|w^n\|_{\mathcal{A}_{\ell}^{k}}\le C_0 \,\epsilon \,
\theta^{\max\{3-\tilde{k},\, k-\tilde{k}\}}_n \Delta \theta_n,
\end{align}
holds for all $n\ge 0$, $0\le k\le k_0$ where $\theta_n=\sqrt{\theta_0^2+n}$
and $\Delta \theta_n=\theta_{n+1}-\theta_{n}$.
\end{theo}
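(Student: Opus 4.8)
The plan is to argue by induction on $n$, using Proposition \ref{prop-w-n} as the basic linear estimate and controlling the source terms $\tilde f^n$ via the smoothing-operator bounds \eqref{f2.5}--\eqref{f2.5-3}. The induction hypothesis \eqref{wn} for all steps $\le n-1$ should be strengthened to a package of estimates: not only the bound on $\|w^j\|_{\cA^k_\ell}$, but also the induced bounds on $\|\delta u^j\|_{\cA^k_\ell}$ and $\|\delta v^j\|_{\cD^{k-1}_0}$ (recovered from $w^j$ by integration in $y$ and division by $\partial_y u^j_{\theta_j}$, using the commutator estimates of Lemma \ref{lemm-commutators}), together with bounds on the mollified coefficients $\lambda^j_k$. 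The first thing I would verify is that, under the main assumption (MA), these induced quantities satisfy: $\lambda^n_k \lesssim 1$ for $k\le \tilde k$ (so the polynomials $C_1(\lambda^n_3), C_2(\lambda^n_3)$ in \eqref{energy-A-1} are uniformly bounded), and $\lambda^n_k \lesssim \epsilon\,\theta_n^{(k-\tilde k)_+}$ for $\tilde k < k\le k_0$, the latter following from $u^n_{\theta_n}-u^s = S_{\theta_n}(\tilde u^0 + \sum_{j<n}\delta u^j)$, the smoothing gain \eqref{f2.5}, and the telescoped sum $\sum_{j<n}\|\delta u^j\|_{\cA^{\tilde k}_\ell}\lesssim \epsilon\sum_j \Delta\theta_j\,\theta_j^{-1}$ which converges.

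The heart of the matter is estimating $\tilde f^n$ from \eqref{tilde-f-n}. I would split it into the three groups appearing there: the "Newton error" part $\sum_{j} e^{(1)}_j$, the "mollification error" part $\sum_j e^{(2)}_j$, and the approximation defect ${f}^a$. For the last, Lemma \ref{f-a} plus \eqref{f2.5-3} gives $\|(S_{\theta_{n-1}}-S_{\theta_n}){f}^a/\partial_y u^n_{\theta_n}\|_{\cA^k_\ell}\lesssim \epsilon\,\theta_n^{k-k_0}\Delta\theta_n$, which is far better than needed since $k_0\ge \tilde k+2$. For $e^{(1)}_j = \delta u^j\partial_x(\delta u^j)+\delta v^j\partial_y(\delta u^j)$ I would use the Morse inequalities of Lemma \ref{lemm2.1} together with the induction hypothesis to get $\|e^{(1)}_j\|_{\cA^k_\ell}\lesssim (C_0\epsilon)^2\,\theta_j^{2(k-\tilde k)_+ + \text{(loss)}}(\Delta\theta_j)^2$ — the quadratic gain in $\epsilon$ is what ultimately closes the induction, and one must check that the exponent of $\theta_j$ after summing against $(S_{\theta_{n-1}}-S_{\theta_n})$ lands below $(k-\tilde k)_+ + $ (the exponent in \eqref{wn}). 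For $e^{(2)}_j$, which contains the un-smoothed factors $(1-S_{\theta_j})(u^j-u^s)$ and $(1-S_{\theta_j})v^j$, I would use the second bound in \eqref{f2.5}: these pick up a factor $\theta_j^{\tilde k - 2k_0-1}\|\tilde u_0\|_{\cA^{2k_0+1}_\ell}$-type decay from the lost derivatives, again with an extra $\epsilon$ from $\delta u^j,\delta v^j$. Summing the two telescoping differences $(S_{\theta_{n-1}}-S_{\theta_n})$ and $S_{\theta_n}$ acting on these, and carefully tracking the $\theta$-exponents (using $\Delta\theta_n\sim \theta_n^{-1}$ and $\theta_j\le\theta_n$), yields $\|\tilde f^n\|_{\cA^k_\ell}\le C_0'\epsilon^2\,\theta_n^{\max\{3-\tilde k,\,k-\tilde k\}}\Delta\theta_n$ for a constant $C_0'$ independent of $C_0$.

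Feeding this into \eqref{energy-A-1} — using $\lambda^n_3\lesssim 1$ so $C_1(\lambda^n_3)\lesssim 1$, and $\lambda^n_k\lesssim \epsilon\theta_n^{(k-\tilde k)_+}$ times $\|\tilde f^n\|_{\cA^3_\ell}\lesssim \epsilon^2\theta_n^{3-\tilde k}\Delta\theta_n$ for the second term — gives $\|w^n\|_{\cA^k_\ell}\le C'' C_0 \epsilon^2\,\theta_n^{\max\{3-\tilde k,k-\tilde k\}}\Delta\theta_n$. Choosing $\epsilon$ small enough that $C''\epsilon\le 1$ (this is exactly the smallness in (MA)) closes the induction with the same constant $C_0$; the base case $n=0$ follows directly from $f^0=-S_{\theta_0}{f}^a$ and Lemma \ref{f-a} with $\theta_0$ large. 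The main obstacle I anticipate is the bookkeeping of $\theta$-exponents across the two regimes $k\le\tilde k$ and $k>\tilde k$: one must check that the "tame" estimate structure (linear-in-highest-norm, with the loss absorbed by $\tilde k$ derivatives of slack) is genuinely preserved through the nonlinear error terms and the change of unknown $u\leftrightarrow w$, and in particular that the convergence-forcing exponent $3-\tilde k<0$ (valid since $\tilde k\ge 7$) is not spoiled by the derivative losses in $e^{(2)}_j$ and in the coefficients $\eta^n,\zeta^n$. This is where the hypotheses $\tilde k\ge 7$ and $k_0\ge\tilde k+2$, and the high regularity $\cA^{2k_0+1}_\ell$ of the data in (MA), are consumed.
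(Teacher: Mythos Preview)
Your overall strategy is the paper's: induction on $n$, control $\lambda^n_k$ and $\tilde f^n$ via the accumulated $\delta u^j,\delta v^j$ and the smoothing bounds, then feed into \eqref{energy-A-1}. Two of your intermediate claims, however, are off and would prevent the argument from closing as you describe.

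First, $\lambda^n_k$ is \emph{not} $O(\epsilon\,\theta_n^{(k-\tilde k)_+})$. By definition \eqref{lambda-n} it contains $\|u^s\|_{\cC^k_0}$ and $\|\bar\eta^n\|_{\cC^k_0}$, which are $O(1)$ regardless of $\epsilon$; moreover the term $\zeta^n$ loses three derivatives through the mollifier, so the paper only obtains (Proposition \ref{prop-lambda-n}) $\lambda^n_k\le C_6\,\theta_n^{\max\{1,\,k+3-\tilde k\}}$ for $k\ge 4$, with $\lambda^n_3\le C_6$. There is no extra factor of $\epsilon$ here. This is harmless in the end because in \eqref{energy-A-1} the second term is $\lambda^n_k\cdot\|\tilde f^n\|_{\cA^3_\ell}$, and the product of exponents $\max\{1,k+3-\tilde k\}+(3-\tilde k)$ is still $\le k-\tilde k$ once $\tilde k\ge 6$; but your stated bound would not survive the bookkeeping.

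Second, you claim $\|\tilde f^n\|_{\cA^k_\ell}\le C_0'\epsilon^2\theta_n^{\max\{3-\tilde k,k-\tilde k\}}\Delta\theta_n$ and then close the induction by taking $C''\epsilon\le 1$. This is not how it works: the $(S_{\theta_{n-1}}-S_{\theta_n})f^a$ contribution is only $O(\epsilon)$, not $O(\epsilon^2)$, and its constant is \emph{independent of $C_0$}. The error contributions $e_j$ do give $O(\epsilon^2)$, but their constants depend on $C_0$ through the chain $C_8\to C_1\to C_0$. The correct structure (see the proof of Proposition 5.15 and the end of \S5.2) is
\[
\|\tilde f^n\|_{\cA^k_\ell}\le \bigl(A + B(C_0)\,\epsilon\bigr)\,\epsilon\,\theta_n^{\max\{3-\tilde k,\,k-\tilde k\}}\Delta\theta_n,
\]
with $A$ independent of $C_0$. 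Feeding into \eqref{energy-A-1} gives the same form for $\|w^n\|_{\cA^k_\ell}$, and one closes by first fixing $C_0$ large (to absorb the $A$-part, which comes from $f^a$ and the $O(1)$ pieces of $\lambda^n_k$) and \emph{then} choosing $\epsilon$ small so that $B(C_0)\epsilon$ is dominated. Your mechanism, which treats the whole right-hand side as $C''C_0\epsilon^2$ and kills it with a single smallness condition, would not handle the $f^a$ term.
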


Theorem \ref{theo-wn} will be proved by induction on $n$. First of all,
to apply Proposition \ref{prop-w-n}, we need to estimate
$\lambda_k^n$ and $\tilde{f}^n$
by induction on $n$ also. For this purpose, we first give the following estimates,
some of the proofs  being postponed to  Section  \ref{section7}. The proof of Theorem \ref{theo-wn}
will be completed at the end of this subsection.

\begin{lemm}\label{lemm-delta-u-j}
Suppose that the main assumption (MA), and
\eqref{wn} for $w^j$, $0\le j\le n-1$, hold.  Then there is a constant $C_1>0$, such that
\begin{align}
\|\delta u^j\|_{\mathcal{A}_{\ell}^{k}}\leq
C_1\epsilon\, \theta_j^{\max\{3-\tilde{k},\,k-\tilde{k}\}}\Delta \theta_j, \qquad 0\le k\le k_0,
\label{esti-delta-u}
\end{align}
\begin{equation}\label{Linfty}
\|\frac{\delta u^j}{\partial_y u^s}\|_{L^\infty([0,T]\times \RR_+^2)}\le C_1
\epsilon\theta_j^{3-\tilde{k}}\Delta\theta_j,
\end{equation}
and
\begin{equation}\label{D-k-1}
\|\frac{\delta u^j}{\partial_y u^s}
\|_{{\cD}^{k-1}_{0}}\le C_1\epsilon
\,\theta_j^{\max\{3-\tilde{k},\,k-1-\tilde{k}\}}
\Delta\theta_j, \qquad 1\le k\le k_0,
\end{equation}
hold for all $0\le j\le n-1$.
\end{lemm}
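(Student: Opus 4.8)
The statement of Lemma \ref{lemm-delta-u-j} is essentially an ``integrating-the-inductive-hypothesis'' step: from the bound \eqref{wn} on $w^j$ one must recover bounds on $\delta u^j$ itself (in $\cA^k_\ell$, in $L^\infty$, and in $\cD^{k-1}_0$). The key relation is the defining identity for $w^j$, namely
\begin{equation*}
w^j=\partial_y\Big(\frac{\delta u^j}{\partial_y u^j_{\theta_j}}\Big),\qquad\text{i.e.}\qquad \delta u^j=\big(\partial_y u^j_{\theta_j}\big)\int_0^y w^j(t,x,\tilde y)\,d\tilde y,
\end{equation*}
together with $\delta v^j=-\int_0^y\partial_x(\delta u^j)\,d\tilde y$ from the divergence-free condition. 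The plan is: first, use the inductive hypothesis \eqref{wn} for $w^0,\dots,w^{n-1}$ to control $u^j_{\theta_j}$ and its $y$-derivative in the relevant norms; second, write $\frac{\delta u^j}{\partial_y u^s}=\frac{\partial_y u^j_{\theta_j}}{\partial_y u^s}\int_0^y w^j$, estimate the factor $\frac{\partial_y u^j_{\theta_j}}{\partial_y u^s}$ using the properties of the monotonic shear flow from the Proposition in Section \ref{s2.2-1} and the commutator Lemma \ref{lemm-commutators}, and estimate $\int_0^y w^j\,d\tilde y$ via the weight $\langle y\rangle^{-\ell}$ and Cauchy--Schwarz (this is where $\ell>1/2$ enters, exactly as in the bound on $A_3$ in Section \ref{s3}); third, combine these two estimates through the Morse-type inequality of Lemma \ref{lemm2.1} to pass from $\|w^j\|_{\cA^k_\ell}$ to $\|\delta u^j\|_{\cA^k_\ell}$, picking up the product structure that produces the $\max\{3-\tilde k,\,k-\tilde k\}$ exponent.

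**Carrying out the three bounds.** For \eqref{esti-delta-u}: by Lemma \ref{lemm2.1} applied to the product $(\partial_y u^j_{\theta_j})\cdot\int_0^y w^j$, one term puts all $k$ derivatives on $\int_0^y w^j$ (bounded by $\|w^j\|_{\cA^k_\ell}\lesssim\epsilon\,\theta_j^{\max\{3-\tilde k,k-\tilde k\}}\Delta\theta_j$ times an $L^\infty$ bound on $\partial_y u^j_{\theta_j}$, which is $O(1)$ by the shear-flow estimate \eqref{result-u-s} plus the smallness of the perturbation), and the other term puts all $k$ derivatives on $\partial_y u^j_{\theta_j}$. The latter is where the smoothing operator helps: $\partial_y u^j_{\theta_j}=\partial_y u^s+S_{\theta_j}\partial_y(\tilde u^0+\sum_{i<j}\delta u^i)$, so high derivatives of it cost powers of $\theta_j$ via \eqref{f2.5} but are multiplied by the small quantity $\int_0^y w^j$ (of size $\epsilon\,\theta_j^{3-\tilde k}\Delta\theta_j$ in $L^\infty$, using $\ell>1/2$ and the $\cA^3_\ell$ part of \eqref{wn}); one checks that $\theta_j^{3-\tilde k}\cdot\theta_j^{k-3}=\theta_j^{k-\tilde k}$ fits inside the claimed exponent. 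The $L^\infty$ bound \eqref{Linfty} is the simplest: it is just the $k=3$ (or low-order) instance combined with the Sobolev embedding $\cA^3_\ell\hookrightarrow L^\infty$ from \eqref{sob-C}/the remark after it, plus division by $\partial_y u^s$ controlled by $\|\frac{\partial_y^2 u^s}{\partial_y u^s}\|$-type bounds. Finally \eqref{D-k-1} follows the same pattern but with the $\cD^{k-1}_0$ norm, using the second Morse inequality in Lemma \ref{lemm2.1} (the $\cC$--$\cD$ version) and $\|w^j\|_{\cD^{k-1}_0}\lesssim\|w^j\|_{\cA^k_\ell}$ from \eqref{sob-C}, which accounts for the shift $k-1$ in the exponent.

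**The main obstacle.** The delicate point is \emph{not} any single estimate but keeping the $\theta_j$-bookkeeping consistent: one must verify that whenever derivatives land on the mollified coefficient $\partial_y u^j_{\theta_j}$, the gain from the attached small factor (a power $\theta_j^{3-\tilde k}$ coming from \eqref{wn} at low order, which is negative since $\tilde k\ge 7$) strictly dominates the loss $\theta_j^{(\cdot)}$ from \eqref{f2.5}, uniformly in $j\le n-1$, and that the resulting exponent never exceeds $\max\{3-\tilde k,\,k-\tilde k\}$. This requires care precisely at the threshold $k$ near $\tilde k$ where the two branches of the maximum meet, and it is the reason the hypothesis $\tilde k\ge 7$, $k_0\ge\tilde k+2$ is imposed. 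A secondary subtlety is that the factor $\frac{\partial_y u^j_{\theta_j}}{\partial_y u^s}$ must be shown to stay uniformly bounded in the high-order norms $\cA^k_\ell$, $\cC^k_\ell$; this is where one invokes the commutator estimate \eqref{commutator-1}--\eqref{commutator-2} together with the quotient bounds \eqref{cond-2}--\eqref{cond-3} for the shear flow, and then closes the argument by absorbing the small contributions of $\tilde u^0$ and $\sum_{i<j}\delta u^i$ using Lemma \ref{u-a} and the inductive bounds already obtained. Once these are in place, summing the geometric-type contributions over $i<j$ (using $\Delta\theta_i\sim\theta_i^{-1}$ and $\tilde k\ge 7$ so the relevant series converge) yields the stated constant $C_1$ depending only on the data, completing the lemma.
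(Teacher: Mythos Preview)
Your proposal is correct and follows essentially the same route as the paper: write $\delta u^j=(\partial_y u^j_{\theta_j})\int_0^y w^j$, apply the Morse-type product inequality of Lemma~\ref{lemm2.1}, and close via induction on $j$ using the smoothing bounds \eqref{f2.5} and the commutator Lemma~\ref{lemm-commutators} to control $\frac{\partial_y u^j_{\theta_j}}{\partial_y u^s}$. Two minor points of presentation: (i) the paper isolates the uniform bound $\|\frac{\partial_y u^j_{\theta_j}}{\partial_y u^s}\|_{L^\infty}\le M_s$ as a separate sublemma (Lemma~\ref{lemm5001}), itself proved by induction on $j$ with the identity \eqref{coef}, rather than treating it as a ``secondary subtlety''; and (ii) in the product split for \eqref{esti-delta-u} the low-derivative factor on $\partial_y u^j_{\theta_j}$ is taken in $L^2_{y,\ell}(L^\infty_{t,x})$ rather than $L^\infty$ (since $\partial_y u^s$ decays in $y$), which is why the second form of Lemma~\ref{lemm2.1} is the relevant one---but this does not affect your $\theta_j$-bookkeeping.
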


Since the proof of this lemma is technical, it will be given in  Section  \ref{section7}.

As $\delta v^j(t,x,y)=-\int_0^y(\partial_x\delta u^j)(t,x,\tilde{y})d\tilde{y}$,
from \eqref{esti-delta-u} we immediately have

\begin{lemm}\label{lemm-delta-v-j}
Under the same assumptions as for Lemma \ref{lemm-delta-u-j}, there is a constant $C_2>0$, such that
\begin{equation}\label{esti-delta-v}
\|\delta v^j\|_{\cD_0^k}\le C_2 \epsilon\theta_j^{\max\{3-\tilde{k},k+1-\tilde{k}\}}\Delta\theta_j,
\qquad 0\le k\le k_0-1,
\end{equation}
holds for all $0\le j\le n-1$.
\end{lemm}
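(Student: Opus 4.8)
The estimate follows at once from Lemma \ref{lemm-delta-u-j} together with the incompressibility relation
$$
\delta v^j(t,x,y)=-\int_0^y (\partial_x\delta u^j)(t,x,\tilde y)\,d\tilde y ,
$$
and the boundary condition $\delta v^j|_{y=0}=0$. To control $\|\delta v^j\|_{\cD_0^k}$ we bound each term $\|\partial_{(t,x)}^{k_1}\partial_y^{k_2}\delta v^j\|_{L^\infty_y(L^2_{t,x})}$ with $k_1+[\frac{k_2+1}{2}]\le k$, splitting according to whether $k_2\ge 1$ or $k_2=0$.

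When $k_2\ge 1$ the normal differentiation removes the $y$-integration, so $\partial_{(t,x)}^{k_1}\partial_y^{k_2}\delta v^j=-\partial_{(t,x)}^{k_1}\partial_x\partial_y^{k_2-1}\delta u^j$. Setting $g=\partial_{(t,x)}^{k_1}\partial_x\partial_y^{k_2-1}\delta u^j$, the elementary one-dimensional Sobolev bound $\|g\|_{L^\infty_y(L^2_{t,x})}^2\le 2\|g\|_{L^2_{t,x,y}}\,\|\partial_y g\|_{L^2_{t,x,y}}$ reduces matters to estimating $\|g\|_{L^2}$ and $\|\partial_y g\|_{L^2}$ by $\|\delta u^j\|_{\cA_\ell^{k+1}}$. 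The only point requiring care is the index bookkeeping: $g$ carries $k_1+1$ tangential and $k_2-1$ normal derivatives and $\partial_y g$ carries $k_1+1$ tangential and $k_2$ normal derivatives, and using $[\frac{(k_2-1)+1}{2}]=[\frac{k_2+1}{2}]$ for even $k_2$ and $[\frac{(k_2-1)+1}{2}]=[\frac{k_2+1}{2}]-1$ for odd $k_2$, one checks that both $(k_1+1)+[\frac{(k_2-1)+1}{2}]$ and $(k_1+1)+[\frac{k_2+1}{2}]$ are $\le k+1$ whenever $k_1+[\frac{k_2+1}{2}]\le k$. Hence each such term is $\le C\|\delta u^j\|_{\cA_\ell^{k+1}}$.

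When $k_2=0$ we keep the integral, $\partial_{(t,x)}^{k_1}\delta v^j=-\int_0^y\partial_{(t,x)}^{k_1}\partial_x\delta u^j\,d\tilde y$, and apply Minkowski's integral inequality in $y$ followed by Cauchy--Schwarz against the weight $\langle\tilde y\rangle^{-\ell}$:
$$
\big\|\partial_{(t,x)}^{k_1}\delta v^j(\cdot,\cdot,y)\big\|_{L^2_{t,x}}\le\Big(\int_0^\infty\langle\tilde y\rangle^{-2\ell}\,d\tilde y\Big)^{1/2}\big\|\langle\tilde y\rangle^{\ell}\partial_{(t,x)}^{k_1}\partial_x\delta u^j\big\|_{L^2_{\tilde y}(L^2_{t,x})} ,
$$
where the first factor is finite precisely because $\ell>\frac12$; since $k_1+1\le k+1$, the second factor is $\le\|\delta u^j\|_{\cA_\ell^{k+1}}$, uniformly in $y$. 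Combining the two cases gives $\|\delta v^j\|_{\cD_0^k}\le C\|\delta u^j\|_{\cA_\ell^{k+1}}$ for $0\le k\le k_0-1$, and the claimed bound then follows immediately from \eqref{esti-delta-u} applied at order $k+1\le k_0$, producing the factor $\theta_j^{\max\{3-\tilde k,\,k+1-\tilde k\}}\Delta\theta_j$. There is no real obstacle here; the only things to watch are the use of the decay exponent $\ell>\frac12$ to make the $y$-integral in the $k_2=0$ case converge, and the verification that the map $\delta u^j\mapsto\delta v^j$ costs exactly one order of regularity, so that only $\|\delta u^j\|_{\cA_\ell^{k+1}}$ (hence the restriction $k\le k_0-1$) is needed rather than $\|\delta u^j\|_{\cA_\ell^{k+2}}$.
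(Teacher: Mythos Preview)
Your proof is correct and follows exactly the approach the paper indicates: the paper simply remarks, just before the lemma, that since $\delta v^j(t,x,y)=-\int_0^y(\partial_x\delta u^j)(t,x,\tilde y)\,d\tilde y$, the estimate follows immediately from \eqref{esti-delta-u}. You have supplied the routine details (the split $k_2\ge 1$ versus $k_2=0$, the one-dimensional Sobolev/Gagliardo--Nirenberg bound, and the Cauchy--Schwarz step exploiting $\ell>\tfrac12$) that make this ``immediately'' precise, and your index bookkeeping confirming that the map $\delta u^j\mapsto\delta v^j$ costs exactly one order in the $\cA_\ell$ scale is accurate.
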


Based on Lemma \ref{lemm-delta-u-j} and Lemma \ref{lemm-delta-v-j}, we have

\begin{lemm}\label{lemm-u-v-n}
Under the same assumptions as for Lemma \ref{lemm-delta-u-j}, there is a constant $C_3>0$, such that
\begin{equation}\label{esti-u-n}
\|u^n-u^s\|_{\mathcal{A}_{l}^{k}}\leq C_3\epsilon
\theta_n^{\max\{0,\,k+1-\tilde{k}\}}, \qquad 0\le k\le k_0,
\end{equation}
\begin{equation}\label{L-infinity}
\|v^n_{\theta_n}\|_{L^\infty([0,T]\times \RR_+^2)}+\|v^n_{\theta_n}
\|_{L^\infty([0,\infty)_y, L^2([0,T]\times \RR_x)}\leq C_3,
\end{equation}
and
\begin{equation}\label{eq-v-n}
\|v^n_{\theta_n}\|_{\mathcal{D}^{k}_{0}}\le \begin{cases}
C_\rho \|v^n\|_{\mathcal{D}^{k}_{0}}\le {C}_3\epsilon\, \theta_n^{\max\{0,\,k+2-\tilde{k}\}},
\quad 0\le k\le k_0-1,\\[2mm]
C_\rho \theta_n \|v^n\|_{\mathcal{D}^{k_0-1}_{0}}\le
{C}_3\epsilon\, \theta_n^{\max\{1,\,k_0+2-\tilde{k}\}}, \quad
 k=k_0,
\end{cases}
\end{equation}
hold, where $C_\rho>0$ is given in \eqref{f2.5}.
\end{lemm}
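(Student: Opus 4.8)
The plan is to prove \eqref{esti-u-n}--\eqref{eq-v-n} by summing the increment estimates from Lemmas \ref{lemm-delta-u-j} and \ref{lemm-delta-v-j} over $j=0,\dots,n-1$, and then applying the smoothing operator properties \eqref{f2.5} to pass from $(u^n,v^n)$ to $(u^n_{\theta_n},v^n_{\theta_n})$. First, recall $u^n-u^s=\tilde{u}^0+\sum_{j=0}^{n-1}\delta u^j$. For \eqref{esti-u-n} with $0\le k\le k_0$, I would write $\|u^n-u^s\|_{\cA_\ell^k}\le \|\tilde{u}^0\|_{\cA_\ell^k}+\sum_{j=0}^{n-1}\|\delta u^j\|_{\cA_\ell^k}$, bound the first term by $C\epsilon$ using \eqref{u-a-2} (since $\tilde{\cA}^{k_0+1}_\ell\hookrightarrow\cA_\ell^k$ for $k\le k_0$), and for the sum invoke \eqref{esti-delta-u} to get $\sum_{j=0}^{n-1}C_1\epsilon\,\theta_j^{\max\{3-\tilde k,\,k-\tilde k\}}\Delta\theta_j$. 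The point is the elementary summation lemma: since $\Delta\theta_j=\theta_{j+1}-\theta_j\sim \tfrac{1}{2}\theta_j^{-1}$, one has $\sum_{j\ge 0}\theta_j^{a-1}\Delta\theta_j\lesssim \sum \theta_j^{a-2}\lesssim$ a constant when $a<0$ (here $a=3-\tilde k<0$ since $\tilde k\ge 7$), and $\sum_{j=0}^{n-1}\theta_j^{a-1}\Delta\theta_j\lesssim \theta_n^{a}$ when $a>0$. Taking $a=\max\{3-\tilde k,\,k-\tilde k\}+1$, this yields $\lesssim \theta_n^{\max\{0,\,k+1-\tilde k\}}$ (the $\max\{0,\cdot\}$ absorbs the convergent case $k+1\le \tilde k$), giving \eqref{esti-u-n} after renaming the constant $C_3$.

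For the $v$-estimates, the same summation is applied to $v^n=v^0+\sum_{j=0}^{n-1}\delta v^j$ in the $\cD_0^k$ norm. For $0\le k\le k_0-1$, \eqref{esti-delta-v} gives terms $\theta_j^{\max\{3-\tilde k,\,k+1-\tilde k\}}\Delta\theta_j$, and the same summation lemma produces $\theta_n^{\max\{0,\,k+2-\tilde k\}}$; together with $\|v^0\|_{\cD_0^k}\le C\epsilon$ from \eqref{u-a-2} this gives the first line of \eqref{eq-v-n}. Then I apply the smoothing operator bound \eqref{f2.5} on the $\cD$-spaces: for $k\le k_0-1$, $\|v^n_{\theta_n}\|_{\cD_0^k}=\|S_{\theta_n}v^n\|_{\cD_0^k}\le C_\rho\|v^n\|_{\cD_0^k}$ (take $s=\alpha=k$, so $(s-\alpha)_+=0$); while for $k=k_0$ we lose one derivative, $\|S_{\theta_n}v^n\|_{\cD_0^{k_0}}\le C_\rho\theta_n^{(k_0-(k_0-1))_+}\|v^n\|_{\cD_0^{k_0-1}}=C_\rho\theta_n\|v^n\|_{\cD_0^{k_0-1}}$, which by the already-obtained bound on $\|v^n\|_{\cD_0^{k_0-1}}$ gives $\lesssim \epsilon\,\theta_n^{1+\max\{0,\,k_0+1-\tilde k\}}=\epsilon\,\theta_n^{\max\{1,\,k_0+2-\tilde k\}}$, which is the second line of \eqref{eq-v-n}. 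Finally \eqref{L-infinity} follows from \eqref{eq-v-n} with $k=0$ together with the Sobolev embedding \eqref{sob-C} (or directly $\cD_0^1\hookrightarrow$ the stated norms), since $k_0\ge \tilde k+2\ge 9$ ensures $0\le k_0-1$ and the exponent $\max\{0,2-\tilde k\}=0$, giving a uniform bound $C_3$ independent of $n$.

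The main obstacle is bookkeeping the exponents through the $\max\{\cdot,\cdot\}$ expressions and verifying that the summation over $j$ behaves correctly in both regimes (convergent geometric-type sum versus growing sum $\sim\theta_n^{a}$) — in particular making sure the $\max\{0,\cdot\}$ in the conclusions is precisely what the summation lemma outputs, and that the loss of one power of $\theta_n$ from the smoothing operator at the top index $k=k_0$ is correctly accounted for. I expect this to be routine once the summation lemma $\sum_{j=0}^{n-1}\theta_j^{a-1}\Delta\theta_j\lesssim 1+\theta_n^a$ (with the convention that the right side is $O(1)$ when $a\le 0$) is stated and proved; the rest is triangle inequality, the increment bounds from Lemmas \ref{lemm-delta-u-j}--\ref{lemm-delta-v-j}, the zero-th order bounds from Lemma \ref{u-a}, and the smoothing estimates \eqref{f2.5}. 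No genuinely new analytic input is needed at this step; the hard analytic work sits in Lemma \ref{lemm-delta-u-j}, which is deferred to Section \ref{section7}.
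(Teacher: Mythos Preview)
Your proposal is correct and follows essentially the same route as the paper: write $u^n-u^s=\tilde u^0+\sum_{j=0}^{n-1}\delta u^j$ (and similarly for $v^n$), bound $\tilde u^0$ by Lemma~\ref{u-a}, sum the increment estimates \eqref{esti-delta-u}--\eqref{esti-delta-v} via the elementary summation inequality (the paper records this as \eqref{ineq}), and then invoke \eqref{f2.5} for the smoothed $v^n_{\theta_n}$. One minor slip: for the $L^\infty$ part of \eqref{L-infinity} you need $\cD_0^2$ (two tangential derivatives for the $2$D Sobolev embedding in $(t,x)$), not $\cD_0^1$; since $\tilde k\ge 7$ the exponent $\max\{0,4-\tilde k\}=0$ and the bound is still uniform in $n$.
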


\begin{proof}
From the identity
$$
u^n-u^s=\tilde{u}^0+\sum\limits_{j=0}^{n-1}\delta u^j
$$
we have immediately by using \eqref{u-a-2} and Lemma \ref{lemm-delta-u-j}
that
\begin{equation}
\label{esti-u-u-s}
\begin{array}{ll}
\|u^n-u^s\|_{\mathcal{A}_{l}^{k}} & \le
\|\tilde{u}^0\|_{\mathcal{A}_{l}^{k}}+\sum\limits_{j=0}^{n-1}
\|\delta u^j\|_{\mathcal{A}_{l}^{k}}\\
& \le C^a\epsilon+C_1\epsilon\sum\limits_{j=0}^{n-1}
 \theta_j^{\max\{3-\tilde{k},\,k-\tilde{k}\}}\Delta \theta_j\\
& \le C^a\epsilon+C_1\tilde{C}\epsilon \theta_n^{\max\{0,\,k+1-\tilde{k}\}}.
\end{array}
\end{equation}
Here, we have used the fact that
\begin{equation}\label{ineq}
\sum_{p=0}^{j-1} \theta_p^{k-\tilde{k}}\Delta \theta_p
\le \begin{cases}
\tilde{C} \theta_j^{k+1-\tilde{k}}, \quad {\rm as}~k-\tilde{k}\ge 0,\\
\tilde{C}, \quad {\rm as}~k-\tilde{k}\le -2,
\end{cases}
\end{equation}
for an absolute constant $\tilde{C}$.

From \eqref{esti-u-u-s}, we obtain the estimate \eqref{esti-u-n} immediately.
Similarly, from the identity
$$
v^n={v}^0+\sum\limits_{j=0}^{n-1}\delta v^j,
$$
we can easily deduce the estimates \eqref{L-infinity} and \eqref{eq-v-n}  by using Lemma \ref{lemm-delta-v-j}.
\end{proof}

As a direct consequence of the estimate \eqref{esti-u-n},  there is a
constant $\tilde{C}_3>0$ such that
\begin{equation}\label{esti-un-theta}
\|u^n_{\theta_n}-u^s\|_{\mathcal{A}_{\ell}^{k}}\le
\tilde{C}_3\epsilon \theta^{\max\{0,\, k+1-\tilde{k}\}}_n,\qquad
0\le k\le k_0.
\end{equation}

To get the estimate of $\lambda_k^n$, we need to estimate the norms of $\eta^n=\frac{\partial^2_y u^n_{\theta_n}}
{\partial_y u^n_{\theta_n}}$ and
\begin{equation}\label{zeta}
\zeta^n=\frac{(\partial_t+  u^n_{\theta_n}\partial_x +
v^n_{\theta_n}\partial_y-\partial^2_y)\partial_y u^n_{\theta_n}}
{\partial_yu_{\theta_n}^n},
\end{equation}
which are given as follows. Again, the proofs of the next two Lemmas will be given in  Section  \ref{section7}.

\begin{lemm} \label{lemm-eta-n}
Under the same assumptions as for Lemma \ref{lemm-delta-u-j}, there is a constant $C_4>0$, such that
\begin{equation}\label{esti-eta-n}
\|\eta^n-\bar\eta^n\|_{\mathcal{A}_{\ell}^{k}}
\le
\begin{cases}
C_4\epsilon\, \theta_n^{\max\{1, k+2-\tilde{k}\}},\qquad 4\le k\le k_0,\\[2mm]
C_4\epsilon, \qquad k= 3,\end{cases}\end{equation}
and
\begin{equation}\label{esti-eta-n-1}
\|\bar\eta^n\|_{{\mathcal C}_{\ell}^{k}}
\le {C}_4(1+\epsilon\theta_n^{\max\{0,\, k+3-\tilde{k}\}}), \qquad 0\le k\le k_0,\end{equation}
where $\bar\eta^n=\frac{\partial^2_y u^s}
{\partial_y u^n_{\theta_n}}$.
\end{lemm}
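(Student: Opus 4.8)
The estimate to prove, Lemma \ref{lemm-eta-n}, concerns $\eta^n-\bar\eta^n = \frac{\partial_y^2 u^n_{\theta_n}}{\partial_y u^n_{\theta_n}} - \frac{\partial_y^2 u^s}{\partial_y u^n_{\theta_n}} = \frac{\partial_y^2(u^n_{\theta_n}-u^s)}{\partial_y u^n_{\theta_n}}$ and the ``shear remainder'' $\bar\eta^n = \frac{\partial_y^2 u^s}{\partial_y u^n_{\theta_n}}$. The first step is to write $\partial_y u^n_{\theta_n} = \partial_y u^s \cdot \big(1 + \frac{\partial_y(u^n_{\theta_n}-u^s)}{\partial_y u^s}\big)$, so that
\[
\frac{1}{\partial_y u^n_{\theta_n}} = \frac{1}{\partial_y u^s}\cdot\frac{1}{1 + g^n}, \qquad g^n := \frac{\partial_y(u^n_{\theta_n}-u^s)}{\partial_y u^s}.
\]
By \eqref{Linfty} (applied after a smoothing by $S_{\theta_n}$, using the first bound in \eqref{f2.5} with $s=\alpha=0$) together with the smallness of $\epsilon$, one has $\|g^n\|_{L^\infty([0,T]\times\RR_+^2)}\le 1/2$, so $\frac{1}{1+g^n}$ is a smooth bounded function of $g^n$ and the Moser/Faà-di-Bruno type inequalities (Lemma \ref{lemm2.1} plus the chain rule for composition, as in \cite{metivier}) give
\[
\Big\|\frac{1}{1+g^n}\Big\|_{\mathcal A^k_0}\lesssim 1 + \|g^n\|_{\dot{\mathcal A}^k_0}, \qquad \Big\|\frac{1}{1+g^n}\Big\|_{\mathcal C^k_0}\lesssim 1 + \|g^n\|_{\dot{\mathcal C}^k_0}.
\]
The point is then to bound $\|g^n\|_{\dot{\mathcal A}^k_0}$ and $\|g^n\|_{\dot{\mathcal C}^k_0}$: writing $u^n_{\theta_n}-u^s = S_{\theta_n}(\tilde u^0 + \sum_{j\le n-1}\delta u^j)$, one applies the commutator estimates of Lemma \ref{lemm-commutators} (to move $S_{\theta_n}$ past $\frac{1}{\partial_y u^s}$) and then the bounds on $\frac{\partial_y\tilde u^0}{\partial_y u^s}$ from \eqref{f-a-1} and on $\frac{\partial_y \delta u^j}{\partial_y u^s}$ — note $\frac{\partial_y\delta u^j}{\partial_y u^s} = \partial_y\big(\frac{\delta u^j}{\partial_y u^s}\big) + \frac{\delta u^j}{\partial_y u^s}\cdot\frac{\partial_y^2 u^s}{\partial_y u^s}$, which is controlled by \eqref{esti-delta-u}, \eqref{Linfty}, \eqref{D-k-1} and \eqref{result-u-s}. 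Summing over $j$ via the summation rule \eqref{ineq} gives $\|g^n\|_{\dot{\mathcal A}^k_0}\lesssim \epsilon\,\theta_n^{\max\{0,\,k+1-\tilde k\}}$, and $\|g^n\|_{\dot{\mathcal C}^k_0}\lesssim \epsilon\,\theta_n^{\max\{1,\,k+2-\tilde k\}}$ (the extra $\theta_n$ in the last slot, when $k=k_0$, comes from the $\theta_n$-loss in \eqref{eq-v-n} / the second smoothing bound in \eqref{f2.5}); here one uses the Sobolev embedding \eqref{sob-C} to pass between $\mathcal A$, $\mathcal C$, $\mathcal D$ norms.

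With these in hand, \eqref{esti-eta-n} follows from the product rule
\[
\|\eta^n-\bar\eta^n\|_{\mathcal A^k_\ell} = \Big\|\partial_y^2(u^n_{\theta_n}-u^s)\cdot\frac{1}{\partial_y u^s}\cdot\frac{1}{1+g^n}\Big\|_{\mathcal A^k_\ell} \lesssim M_k\Big(\|u^n_{\theta_n}-u^s\|_{\mathcal A^{k+2}_\ell}\,\big\|\tfrac{1}{\partial_y u^s(1+g^n)}\big\|_{L^\infty} + \|u^n_{\theta_n}-u^s\|_{\mathcal A^{2}_\ell}\,\big\|\tfrac{1}{\partial_y u^s(1+g^n)}\big\|_{\dot{\mathcal C}^k_0}\Big),
\]
where I have used the second Morse inequality of Lemma \ref{lemm2.1} (or a $\mathcal C/\mathcal D$-type variant) so as to put $k+2$ derivatives only on the small factor $u^n_{\theta_n}-u^s$. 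Feeding in \eqref{esti-un-theta} (note that $\|u^n_{\theta_n}-u^s\|_{\mathcal A^{k+2}_\ell}\lesssim\epsilon\theta_n^{\max\{0,\,k+3-\tilde k\}}$, which when $k\ge 3$ reorganizes into the claimed exponent after absorbing into the polynomial in $\theta_n$ — and for $k=3$ the hypothesis $\tilde k\ge 7$ makes every exponent $\le 0$, giving the clean $C_4\epsilon$), and the bound on $\frac{1}{1+g^n}$ just obtained, produces \eqref{esti-eta-n}. For $\bar\eta^n = \partial_y^2 u^s\cdot\frac{1}{\partial_y u^s}\cdot\frac{1}{1+g^n} = \frac{\partial_y^2 u^s}{\partial_y u^s}\cdot\frac{1}{1+g^n}$: the factor $\frac{\partial_y^2 u^s}{\partial_y u^s}$ is bounded in $\mathcal C^{k}_0$ by \eqref{result-u-s} uniformly on $[0,T]$, while $\frac{1}{1+g^n}$ is bounded in $\mathcal C^k_0$ by $1 + C\epsilon\theta_n^{\max\{0,\,k+3-\tilde k\}}$ as above; the product inequality for the $\mathcal C$-norm then gives \eqref{esti-eta-n-1}.

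\emph{Where the difficulty lies.} The routine part is the algebra of the Moser inequalities and composition estimates. The genuinely delicate point is bookkeeping the powers of $\theta_n$: the top-order slot $k=k_0$ is special because $v^n_{\theta_n}$ loses a derivative (only $\|v^n\|_{\mathcal D^{k_0-1}_0}$ is available, so $S_{\theta_n}$ costs an extra $\theta_n$, cf.\ \eqref{eq-v-n}), and because smoothing $\tilde u^0 + \sum\delta u^j$ up to order $k_0$ when the sum only lives in $\mathcal A^{k_0}$ requires one factor of $\theta_n$ via the first bound in \eqref{f2.5}. One must check that this single extra $\theta_n$ is exactly what the statement allows — i.e.\ that the exponents $\max\{1,\,k+2-\tilde k\}$ in \eqref{esti-eta-n} and $\max\{0,\,k+3-\tilde k\}$ in \eqref{esti-eta-n-1} are not exceeded — and in particular that at $k=3$ (where $\zeta^n$ and $\eta^n$ feed back into $\lambda^n_3$ in Proposition \ref{prop-w-n}) there is no $\theta_n$-loss at all, which is why the constraint $\tilde k\ge 7$ is imposed. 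The second subtlety is that $g^n$ appears inside $1/(1+g^n)$, so one cannot simply differentiate; the composition estimate must be invoked in both the $\mathcal A^k_\ell$ and $\mathcal C^k_0$ scales, and the $L^\infty$-smallness $\|g^n\|_\infty\le 1/2$ — which is where the smallness of $\epsilon$ (and hence of the whole scheme) is really used — is the hypothesis that makes it legitimate.
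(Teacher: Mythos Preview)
Your setup is sound and largely matches the paper's approach: you correctly identify $g^n = \frac{\partial_y(u^n_{\theta_n}-u^s)}{\partial_y u^s}$ as the key object, establish $\|g^n\|_{L^\infty}\le 1/2$, and bound $\frac{1}{1+g^n}$ via a composition estimate (this is precisely the content of Lemma~\ref{lemm-inverse} and estimate \eqref{eq-5002} in the paper). Your treatment of $\bar\eta^n = \frac{\partial_y^2 u^s}{\partial_y u^s}\cdot\frac{1}{1+g^n}$ is also correct and coincides with the paper's argument.

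There is, however, a genuine gap in your product-rule step for $\eta^n-\bar\eta^n$. You write
\[
\|\eta^n-\bar\eta^n\|_{\mathcal A^k_\ell} \lesssim \|u^n_{\theta_n}-u^s\|_{\mathcal A^{k+2}_\ell}\,\Big\|\frac{1}{\partial_y u^s(1+g^n)}\Big\|_{L^\infty} + \|u^n_{\theta_n}-u^s\|_{\mathcal A^{2}_\ell}\,\Big\|\frac{1}{\partial_y u^s(1+g^n)}\Big\|_{\dot{\mathcal C}^k_0},
\]
but $\frac{1}{\partial_y u^s(1+g^n)} = \frac{1}{\partial_y u^n_{\theta_n}}$ is \emph{not} in $L^\infty$: since $u^s(t,y)\to 1$ as $y\to\infty$, the derivative $\partial_y u^s$ decays (indeed $\langle y\rangle^\ell \partial_y u^s \in L^2_y$ by the hypotheses on $u_0^s$), so $\frac{1}{\partial_y u^s}$ blows up at infinity. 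The same objection kills the $\dot{\mathcal C}^k_0$ term. You cannot split $\partial_y^2(u^n_{\theta_n}-u^s)$ from $\frac{1}{\partial_y u^s}$ in the Moser inequality.

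The remedy is the very identity you already wrote down for $\delta u^j$: keep the quotient intact and expand
\[
\frac{\partial_y^2(u^n_{\theta_n}-u^s)}{\partial_y u^s} \;=\; \partial_y g^n \,+\, g^n\cdot\frac{\partial_y^2 u^s}{\partial_y u^s},
\]
so that $\eta^n-\bar\eta^n = \big(\partial_y g^n + g^n\cdot\frac{\partial_y^2 u^s}{\partial_y u^s}\big)\cdot(1+g^n)^{-1}$, with every factor now bounded. The extra $\partial_y$ on $g^n$ is recovered from the smoothing $S_{\theta_n}$ at the price of a single factor $\theta_n$ (first line of \eqref{f2.5}); combined with $\|g^n\|_{\mathcal A^k_\ell}\lesssim\epsilon\,\theta_n^{\max\{0,k+1-\tilde k\}}$ this produces exactly $\epsilon\,\theta_n^{\max\{1,\,k+2-\tilde k\}}$ for $k\ge 4$, while for $k=3$ one uses $\|g^n\|_{\mathcal A^4_\ell}\lesssim\epsilon$ directly with no $\theta_n$ loss. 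This is what the paper does in \eqref{eq-5028}--\eqref{eq-5028-3}. Note also that your route via $\|u^n_{\theta_n}-u^s\|_{\mathcal A^{k+2}_\ell}$, even setting aside the $L^\infty$ issue, yields the exponent $\max\{0,\,k+3-\tilde k\}$, which for $k\ge\tilde k-1$ is strictly larger than the claimed $\max\{1,\,k+2-\tilde k\}$; the phrase ``reorganizes into the claimed exponent'' is not correct, and the extra power of $\theta_n$ would propagate through Proposition~\ref{prop-lambda-n} and spoil the Nash--Moser closure.
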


\begin{lemm}\label{lemm-zeta}
Under the same assumptions as for Lemma \ref{lemm-delta-u-j},
for $\zeta^n$ defined in \eqref{zeta}, there is a positive constant ${C}_5$ such that
\begin{align}\label{eq-zeta}
\|\zeta^n\|_{\mathcal{A}_{\ell}^{k}}\le
\begin{cases}
{C}_5\theta_n^{\max\{1,\, k+3-\tilde{k}\}},\qquad 4\le k\le k_0,\\[2mm]
{C}_5, \qquad k=3.\end{cases}
\end{align}
\end{lemm}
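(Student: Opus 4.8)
The plan is to estimate $\zeta^n = \big(\partial_t + u^n_{\theta_n}\partial_x + v^n_{\theta_n}\partial_y - \partial_y^2\big)\partial_y u^n_{\theta_n}\big/\partial_y u^n_{\theta_n}$ term by term in the $\mathcal{A}^k_\ell$ norm, using the decomposition $u^n_{\theta_n} = u^s + S_{\theta_n}(\tilde u^0 + \sum_{j\le n-1}\delta u^j)$ and the fact that $\partial_y u^n_{\theta_n} = \partial_y u^s\big(1 + O(\epsilon)\big)$ in $L^\infty$ (from \eqref{Linfty} and \eqref{L-infinity}), so that $1/\partial_y u^n_{\theta_n}$ behaves like $1/\partial_y u^s$ up to a smooth, $L^\infty$-bounded factor controlled by the estimates \eqref{result-u-s} on the shear flow and the smallness of the perturbation. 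First I would write $\zeta^n = \overline\zeta{}^n + (\zeta^n - \overline\zeta{}^n)$, where $\overline\zeta{}^n$ is the piece built solely from $u^s$, namely $\big(\partial_t - \partial_y^2\big)\partial_y u^s\big/\partial_y u^n_{\theta_n} = 0$ by the heat equation for $u^s$ up to the factor $\partial_y u^s/\partial_y u^n_{\theta_n}$ — actually $(\partial_t - \partial_y^2)\partial_y u^s = 0$ identically, so the genuinely problematic contributions all involve at least one derivative hitting the perturbation $S_{\theta_n}(u^n - u^s)$ or the transport coefficients $u^n_{\theta_n}$, $v^n_{\theta_n}$.

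The key steps, in order: (1) Expand the numerator of $\zeta^n$ and isolate the terms; the surviving terms are of the form $u^n_{\theta_n}\partial_x\partial_y u^n_{\theta_n}$, $v^n_{\theta_n}\partial_y^2 u^n_{\theta_n}$, and $\partial_t\partial_y(S_{\theta_n}(u^n-u^s))$, each divided by $\partial_y u^n_{\theta_n}$. (2) Apply the Morse-type inequality (Lemma \ref{lemm2.1}) in the form $\|fg\|_{\mathcal A^k_\ell}\lesssim \|f\|_{\mathcal C^k_\ell}\|g\|_{\mathcal D^0_0} + \|f\|_{\mathcal C^0_\ell}\|g\|_{\dot{\mathcal D}^k_0}$ to each product, so that the transport terms are controlled by $\|v^n_{\theta_n}\|_{\mathcal D^k_0}$ times an $L^\infty$-type norm of $\partial_y^2 u^n_{\theta_n}$, and symmetrically. (3) Invoke the already-established estimates \eqref{esti-un-theta}, \eqref{eq-v-n}, and Lemma \ref{lemm-eta-n} to bound each factor by the appropriate power of $\theta_n$; one must carefully track which factor carries the top-order derivative, since putting $k$ derivatives on $v^n_{\theta_n}$ costs $\theta_n^{\max\{1,\,k+2-\tilde k\}}$ (the $+2$ from losing a derivative in passing from $\delta u$ to $\delta v$, plus the extra $+1$ from the $\partial_y$ in $\zeta^n$'s numerator), which is exactly the $\theta_n^{\max\{1,\,k+3-\tilde k\}}$ claimed. (4) Handle the division by $\partial_y u^n_{\theta_n}$ using the commutator estimates of Lemma \ref{lemm-commutators} together with \eqref{result-u-s} and the smallness of $(u^n_{\theta_n} - u^s)$, writing $1/\partial_y u^n_{\theta_n} = (1/\partial_y u^s)\cdot \partial_y u^s/\partial_y u^n_{\theta_n}$ and expanding the second factor in a Neumann-type series whose remainder is $O(\epsilon)$. (5) Sum over the finitely many terms and over $k_1 + [(k_2+1)/2]\le k$.

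The main obstacle will be step (3): bookkeeping the exponent of $\theta_n$ so that it comes out exactly $k+3-\tilde k$ and no worse. This requires being disciplined about the worst-case allocation of derivatives in every bilinear term — the transport term $v^n_{\theta_n}\partial_y^2 u^n_{\theta_n}$ divided by $\partial_y u^n_{\theta_n}$ is the sharpest, because $\partial_y^2 u^n_{\theta_n}/\partial_y u^n_{\theta_n} = \eta^n$, whose norm from Lemma \ref{lemm-eta-n} already costs $\theta_n^{\max\{1,\,k+2-\tilde k\}}$, and then the transport by $v^n_{\theta_n}$ together with the extra $\partial_y$ in front must be absorbed without a further loss beyond one more power of $\theta_n$; this works precisely because $v^n_{\theta_n}$ at low order is bounded by \eqref{L-infinity} and only the high-order factor is allowed to be large, which is exactly the structure the Morse inequality exploits. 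The $k=3$ case is separate and easy: there the perturbation is uniformly small by (MA) and \eqref{esti-un-theta} with $\max\{0,4-\tilde k\}=0$, so $\|\zeta^n\|_{\mathcal A^3_\ell}\le C_5$ follows directly without any power of $\theta_n$.
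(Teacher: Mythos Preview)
Your overall strategy coincides with the paper's: split $\zeta^n$ into the three pieces
$v^n_{\theta_n}\,\eta^n$, \ $u^n_{\theta_n}\cdot\dfrac{\partial^2_{xy}u^n_{\theta_n}}{\partial_y u^n_{\theta_n}}$, and
$\dfrac{(\partial_t-\partial_y^2)\partial_y u^n_{\theta_n}}{\partial_y u^n_{\theta_n}}$,
use $(\partial_t-\partial_y^2)\partial_y u^s=0$, and apply Lemma~\ref{lemm2.1} together with Lemmas~\ref{lemm-u-v-n}, \ref{lemm-eta-n} and \ref{lemm-inverse}. The first piece is handled exactly as you say.

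There is, however, a real gap in your steps~(3)--(4) for the second and third pieces. You propose to bound the numerators via \eqref{esti-un-theta} and only \emph{afterwards} divide, writing $1/\partial_y u^n_{\theta_n}=(1/\partial_y u^s)\cdot(\partial_y u^s/\partial_y u^n_{\theta_n})$. The second factor is indeed bounded (Lemma~\ref{lemm-inverse}), but the first factor $1/\partial_y u^s$ is \emph{unbounded} as $y\to\infty$, so one cannot multiply an already-obtained $\mathcal{A}^k_\ell$ bound by it. The weight must be built into the estimate of the numerator from the outset. For the $(\partial_t-\partial_y^2)$ piece the paper uses the weighted bound \eqref{eq-5002} on $\partial_y(u^n_{\theta_n}-u^s)/\partial_y u^s$, not \eqref{esti-un-theta}. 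For the $\partial^2_{xy}$ piece neither \eqref{esti-un-theta} nor Lemma~\ref{lemm-eta-n} applies; the paper inserts a separate auxiliary result, Lemma~\ref{lemm6.11}, bounding $\|\partial^2_{xy}u^j_{\theta_j}/\partial_y u^j_{\theta_j}\|_{\mathcal{A}^k_\ell}$ directly. Its proof goes back to the representation $\delta u^p=(\partial_y u^p_{\theta_p})\int_0^y w^p$, so that a genuine factor of $\partial_y u^s$ sits in the numerator, and then uses the commutator estimates you mention. Your ingredients are correct but the order is not: the commutator\,/\,weighted machinery must act \emph{before} you take norms, not after. A minor point: your exponent accounting attributes the worst loss $k+3-\tilde k$ to $v^n_{\theta_n}$; in the paper's computation it actually arises from $\|\bar\eta^n\|_{\mathcal{C}^k_\ell}$ in \eqref{esti-eta-n-1} (the embedding $\mathcal{C}^k\hookleftarrow\mathcal{A}^{k+2}$ costs two extra orders on the inverse factor).
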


By plugging the estimates \eqref{eq-v-n},  \eqref{esti-eta-n}, \eqref{esti-eta-n-1},
\eqref{eq-zeta} and \eqref{esti-un-theta} into the definition \eqref{lambda-n}
of $\lambda_k^n$, we conclude

\begin{prop}\label{prop-lambda-n}
Suppose that the main assumption (MA), and
\eqref{wn} for  $w^j$, $0\le j\le n-1$, hold. There exists a positive constant $C_6>0$
depending on $C_p ~(1\le p\le 5)$ given in Lemmas \ref{lemm-delta-u-j}-\ref{lemm-zeta}, such that
$$
\lambda_k^n\le \begin{cases}
C_6\theta_n^{\max\{1,\, k+3-\tilde{k}\}},\qquad 4\le k\le k_0,\\[2mm]
C_{6},\qquad k= 3.\end{cases}
$$
\end{prop}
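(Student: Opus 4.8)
The plan is to treat this Proposition as a bookkeeping step that merely aggregates the six term‑by‑term bounds already established. Recall that
\[
\lambda_k^n = \|u^n_{\theta_n}-u^s\|_{\mathcal A_0^k}+\|u^s\|_{\mathcal C_0^k}+\|v^n_{\theta_n}\|_{\mathcal D_0^k}+\|\bar\eta^n\|_{\mathcal C_0^k}+\|\eta^n-\bar\eta^n\|_{\mathcal A_0^k}+\|\zeta^n\|_{\mathcal A_\ell^k}.
\]
First I would collect the available estimate for each summand under the standing hypothesis \eqref{wn} for $w^j$, $0\le j\le n-1$: the bound \eqref{esti-un-theta} controls the first term by $\tilde C_3\epsilon\,\theta_n^{\max\{0,k+1-\tilde k\}}$; the second term is bounded by a fixed constant depending only on the fixed shear data, via \eqref{result-u-s} (only derivatives of $u^s$ enter, since $u^s-1$ decays in $y$); \eqref{eq-v-n} controls the third term by $C_3\epsilon\,\theta_n^{\max\{0,k+2-\tilde k\}}$ for $k\le k_0-1$ and by $C_3\epsilon\,\theta_n^{\max\{1,k_0+2-\tilde k\}}$ for $k=k_0$; \eqref{esti-eta-n-1} controls the fourth by $C_4\bigl(1+\epsilon\,\theta_n^{\max\{0,k+3-\tilde k\}}\bigr)$; \eqref{esti-eta-n} controls the fifth by $C_4\epsilon\,\theta_n^{\max\{1,k+2-\tilde k\}}$ for $4\le k\le k_0$ and by $C_4\epsilon$ for $k=3$; and \eqref{eq-zeta} controls the last by $C_5\,\theta_n^{\max\{1,k+3-\tilde k\}}$ for $4\le k\le k_0$ and by $C_5$ for $k=3$.

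Next I would compare exponents. Since $\theta_n=\sqrt{\theta_0^2+n}\ge\theta_0\ge1$, any power $\theta_n^a$ with $a\le\max\{1,k+3-\tilde k\}$ is dominated by $\theta_n^{\max\{1,k+3-\tilde k\}}$, and every constant is likewise $\le\theta_n^{\max\{1,k+3-\tilde k\}}$ because the exponent is at least $1$. For $4\le k\le k_0$ one checks directly that all exponents appearing above, namely $k+1-\tilde k$, $k+2-\tilde k$, $k_0+2-\tilde k$ (only when $k=k_0$), and $k+3-\tilde k$, are $\le\max\{1,k+3-\tilde k\}$; summing the six bounds, absorbing $\epsilon\le1$ and all numerical constants into a single $C_6$ depending on $C_1,\dots,C_5$, yields $\lambda_k^n\le C_6\,\theta_n^{\max\{1,k+3-\tilde k\}}$. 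For the base case $k=3$ I would note that $\tilde k\ge7$ forces $k+1-\tilde k,\;k+2-\tilde k,\;k+3-\tilde k<0$, and $k_0\ge\tilde k+2$ forces $3\le k_0-1$, so each of the six bounds collapses to a fixed constant ($\tilde C_3\epsilon$, the shear constant, $C_3\epsilon$, $C_4(1+\epsilon)$, $C_4\epsilon$, $C_5$); their sum is a constant $C_6$, which gives the second alternative.

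This argument is essentially routine once Lemmas~\ref{lemm-u-v-n}, \ref{lemm-eta-n} and \ref{lemm-zeta} are in hand, so there is no genuine obstacle here; all the real work sits in the postponed proofs of those input lemmas in Section~\ref{section7}. The only points requiring a little care are (i) verifying that the worst exponent over the six terms is exactly $\max\{1,k+3-\tilde k\}$ — this is dictated by the $\zeta^n$ estimate \eqref{eq-zeta} and the $\bar\eta^n$ estimate \eqref{esti-eta-n-1}, the remaining four being strictly better — and (ii) reading $\|u^s\|_{\mathcal C_0^k}$ so that it is finite, which is guaranteed by \eqref{result-u-s} under the hypotheses on $u_0^s$ since only derivatives of $u^s$ occur in the relevant estimates.
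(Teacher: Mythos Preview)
Your proposal is correct and follows exactly the approach of the paper: the paper's proof is the single sentence ``By plugging the estimates \eqref{eq-v-n}, \eqref{esti-eta-n}, \eqref{esti-eta-n-1}, \eqref{eq-zeta} and \eqref{esti-un-theta} into the definition \eqref{lambda-n} of $\lambda_k^n$, we conclude,'' and you have simply written out this bookkeeping in detail, correctly identifying that the dominant exponent $\max\{1,k+3-\tilde k\}$ is dictated by the $\zeta^n$ and $\bar\eta^n$ bounds.
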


To estimate $\tilde{f}^n$ defined in \eqref{tilde-f-n}, we will need the following two
estimates whose proofs  will be given in  Section \ref{section7}.

\begin{lemm}\label{lemm-inverse}
Under the same assumptions as for Lemma \ref{lemm-delta-u-j}, there is a constant $C_7>0$, such that
\begin{equation}\label{esti-inverse}
\|\Big(\frac{\partial_yu^n_{\theta_n}}
{\partial_y u^s}\Big)^{-1}\|_{L^\infty}\le
2, \qquad
\|\Big(\frac{\partial_yu^n_{\theta_n}}
{\partial_y u^s}\Big)^{-1}\|_{\dot{\cA}^k_0}\le
C_7\epsilon\, \theta_n^{\max\{0, k+1-\tilde{k}\}},
\end{equation}
hold, with $1\le k\le k_0$.
\end{lemm}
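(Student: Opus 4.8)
The plan is to write $\partial_y u^n_{\theta_n} = \partial_y u^s + \partial_y(u^n_{\theta_n} - u^s)$, so that
\[
\frac{\partial_y u^n_{\theta_n}}{\partial_y u^s} = 1 + \frac{\partial_y(u^n_{\theta_n}-u^s)}{\partial_y u^s},
\]
and to control the second summand. First I would observe that
\[
\frac{\partial_y(u^n_{\theta_n}-u^s)}{\partial_y u^s} = \partial_y\Big(\frac{u^n_{\theta_n}-u^s}{\partial_y u^s}\Big) + \frac{\partial^2_y u^s}{(\partial_y u^s)^2}(u^n_{\theta_n}-u^s),
\]
but it is cleaner to use the representation $u^n_{\theta_n}-u^s = S_{\theta_n}(\tilde u^0 + \sum_{j=0}^{n-1}\delta u^j)$ together with the commutator estimate \eqref{commutator-1} of Lemma \ref{lemm-commutators}, which bounds $\|[\frac{1}{\partial_y u^s},S_{\theta_n}]\partial_y g\|$ by $\|\frac{g}{\partial_y u^s}\|$ in any of the relevant norms. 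Thus $\frac{\partial_y(u^n_{\theta_n}-u^s)}{\partial_y u^s}$ is controlled, in $L^\infty$ and in $\dot{\cA}^k_0$, by $\|\frac{\tilde u^0}{\partial_y u^s}\|$ together with $\sum_{j=0}^{n-1}\|\frac{\delta u^j}{\partial_y u^s}\|$ in the corresponding norms, plus the smoothing gains $\theta_n^{(\cdot)_+}$ from \eqref{f2.5}.

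For the $L^\infty$ bound I would use \eqref{Linfty}: $\|\frac{\delta u^j}{\partial_y u^s}\|_{L^\infty}\le C_1\epsilon\,\theta_j^{3-\tilde k}\Delta\theta_j$, and since $\tilde k\ge 7 > 5$ the series $\sum_j \theta_j^{3-\tilde k}\Delta\theta_j$ converges by \eqref{ineq} (the case $k-\tilde k\le -2$), giving $\|\frac{\partial_y(u^n_{\theta_n}-u^s)}{\partial_y u^s}\|_{L^\infty}\le C\epsilon + C\epsilon \le \frac12$ for $\epsilon$ small, hence $\|\frac{\partial_y u^n_{\theta_n}}{\partial_y u^s}\|_{L^\infty}\ge \frac12$ and $\|(\frac{\partial_y u^n_{\theta_n}}{\partial_y u^s})^{-1}\|_{L^\infty}\le 2$ by a Neumann-series / geometric argument. (The contribution of $\tilde u^0$ is handled the same way, using \eqref{f-a-1} which already gives $\|\frac{\partial_y\tilde u^0}{\partial_y u^s}\|_{\cA^{k_0+1}_\ell}\le C\epsilon$, hence a small $L^\infty$ bound by Sobolev embedding.)

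For the homogeneous bound, write $(\frac{\partial_y u^n_{\theta_n}}{\partial_y u^s})^{-1} = \sum_{m\ge 1}(-1)^m\big(\frac{\partial_y(u^n_{\theta_n}-u^s)}{\partial_y u^s}\big)^m$, which converges in $\cA^k_0$ by the Morse/algebra inequality of Lemma \ref{lemm2.1} once the $L^\infty$ norm of the base is $\le \frac12$; the $\dot{\cA}^k_0$ norm of the sum is then $\lesssim \|\frac{\partial_y(u^n_{\theta_n}-u^s)}{\partial_y u^s}\|_{\dot{\cA}^k_0}$. To estimate the latter I would invoke the commutator estimate \eqref{commutator-1} in the $\cA^k_\ell$ form, reducing it to $\|\frac{u^n_{\theta_n}-u^s}{\partial_y u^s}\|_{\dot{\cA}^k_\ell}$, and then to $\|S_{\theta_n}\frac{\tilde u^0+\sum\delta u^j}{\partial_y u^s}\|$; applying \eqref{f2.5} and \eqref{D-k-1}/\eqref{esti-delta-u} together with \eqref{ineq} yields the bound $C_7\epsilon\,\theta_n^{\max\{0,k+1-\tilde k\}}$. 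The main obstacle I anticipate is bookkeeping the derivative counts so that, when $k>\tilde k-1$, the growing powers $\theta_j^{k-\tilde k}\Delta\theta_j$ sum (via \eqref{ineq}) to exactly $\theta_n^{k+1-\tilde k}$ and no extra power of $\theta_n$ is lost in passing the smoothing operator and the commutator through — i.e.\ verifying that the smoothing gain in \eqref{f2.5} never upgrades the exponent beyond $\max\{0,k+1-\tilde k\}$; everything else is a routine Neumann-series argument built on Lemma \ref{lemm2.1} and Lemma \ref{lemm-commutators}.
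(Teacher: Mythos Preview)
Your overall architecture is fine and matches the paper: establish smallness of
$g:=\frac{\partial_y(u^n_{\theta_n}-u^s)}{\partial_y u^s}$ in $L^\infty$ and in $\dot{\cA}^k_0$, then pass to the reciprocal. Whether one does the last step via a Neumann series together with Lemma~\ref{lemm2.1} (your route) or via the Fa\`a di Bruno formula (the paper's route) is immaterial; both yield
$\|(\frac{\partial_yu^n_{\theta_n}}{\partial_yu^s})^{-1}\|_{\dot{\cA}^k_0}\lesssim \|g\|_{\dot{\cA}^k_0}$ once $\|g\|_{L^\infty}\le\tfrac12$. Your $L^\infty$ argument is also essentially the paper's (it is precisely the content of Lemma~\ref{lemm5001} and the estimate \eqref{eq5.17}).

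The gap is in how you obtain the crucial bound $\|g\|_{\cA^k_\ell}\le C\epsilon\,\theta_n^{\max\{0,k+1-\tilde k\}}$. You write that the commutator estimate \eqref{commutator-1} ``reduces it to $\|\frac{u^n_{\theta_n}-u^s}{\partial_y u^s}\|_{\dot{\cA}^k_\ell}$''; this is not what \eqref{commutator-1} does. Writing $u^n_{\theta_n}-u^s=S_{\theta_n}(u^n-u^s)$ and commuting, one gets
\[
g=\frac{S_{\theta_n}\partial_y(u^n-u^s)}{\partial_y u^s}
=S_{\theta_n}\!\Big(\frac{\partial_y(u^n-u^s)}{\partial_y u^s}\Big)
+\Big[\tfrac{1}{\partial_y u^s},S_{\theta_n}\Big]\partial_y(u^n-u^s),
\]
and \eqref{commutator-1} only controls the second piece by $\|\frac{u^n-u^s}{\partial_y u^s}\|_{\cA^k}$; the main piece still carries the full $\partial_y$, so you have not moved the derivative off. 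Moreover, the estimates you plan to feed in, \eqref{D-k-1} and \eqref{esti-delta-u}, give respectively $\cD^{k-1}_0$ bounds on $\frac{\delta u^j}{\partial_y u^s}$ and $\cA^k_\ell$ bounds on $\delta u^j$ itself; neither directly yields $\cA^k$ control of $\frac{\delta u^j}{\partial_y u^s}$ or of $\frac{\partial_y\delta u^j}{\partial_y u^s}$, which is what is needed here (the $\cD$ norm is $L^\infty_y$-based and does not embed into the $L^2_y$-based $\cA$ norm on the unbounded half-line without a weight).

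What actually closes this step is the paper's separate estimate \eqref{eq-5002}, proved by induction on $j$: one writes $\delta u^p=(\partial_y u^p_{\theta_p})\int_0^y w^p\,d\tilde y$ and uses the identity \eqref{smooth-AB} to convert $\partial_y(\frac{\delta u^p}{\partial_y u^s})$ into terms involving $w^p$ and $\frac{\partial_y(u^p_{\theta_p}-u^s)}{\partial_y u^s}$ at level $p<j$, so that the induction hypothesis on \eqref{eq-5002} and the assumed bound \eqref{wn} on $w^p$ close the loop. Once \eqref{eq-5002} is in hand, your Neumann-series argument (or the paper's Fa\`a di Bruno) finishes the proof in one line. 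So the missing idea is not bookkeeping of exponents but rather that the $\cA^k$ bound on $g$ has to be bootstrapped through the ``good unknown'' $w^p$, not read off from \eqref{D-k-1}/\eqref{esti-delta-u} via a single commutator.
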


\begin{lemm}\label{lemm-e-1}
Under the same assumptions as for Lemma \ref{lemm-delta-u-j},  there is a constant $C_8>0$, such
that for the error terms
$
e^{(1)}_j=\delta u^j\partial_x\delta u^j+\delta v^j\partial_y \delta u^j$ and
$$\begin{array}{ll}
e_{j}^{(2)}=& \big((1-S_{\theta_j}) (u^j-u^s)\big) \partial_x(\delta u^j)+
\delta u^j \partial_x \big((1-S_{\theta_j}) (u^j-u^s)\big)\\[2mm]
&+ \delta v^j \partial_y \big((1-S_{\theta_j}) (u^j-u^s)\big)
+\big((1-S_{\theta_n}) v^j\big) \partial_y(\delta u^j),
\end{array}$$
the following estimates
\begin{equation}\label{esti-e-1}
\|\frac{e^{(1)}_j}{\partial_yu^s}\|_{\mathcal{A}_{\ell}^{k_1}}
\le C_{8}\epsilon^2\theta_j^{\max\{6-2\tilde{k},\, k_1+3-2\tilde{k}\}}\Delta\theta_j,
\end{equation}
and
\begin{align}\label{esti-e-2}
\|\frac{e^{(2)}_j}{\partial_y u^s}\|_{\mathcal{A}_{l}^{k_1}}
&\le C_{8}
\epsilon^2 \theta_j^{\max(3-\tilde{k},k_1+5-2\tilde{k})}\Delta \theta_j,
\end{align}
hold for all $k_1\le k_0-1$ and $0\le j\le n-1$.
\end{lemm}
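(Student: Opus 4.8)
The plan is to expand $e_j^{(1)}$ and $e_j^{(2)}$ into their bilinear constituents, estimate every monomial by the Morse--type inequalities of Lemma~\ref{lemm2.1} (using the $(\mathcal{C},\mathcal{D})$--version for the monomials that carry a normal--velocity increment $\delta v^j$, since $\delta v^j$ lies in $\mathcal{D}$--spaces but not in the weighted $\mathcal{A}$-- or $\mathcal{C}$--spaces), and then insert the inductive bounds \eqref{wn}, \eqref{esti-delta-u}--\eqref{D-k-1}, \eqref{esti-delta-v} and \eqref{esti-u-n}--\eqref{eq-v-n}, together with the smoothing estimates \eqref{f2.5} for the factors $1-S_{\theta_j}$ occurring in $e_j^{(2)}$. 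Two preliminary remarks organize the computation. First, the division by $\partial_y u^s$ is never carried out on $1/\partial_y u^s$ alone (which grows as $y\to\infty$); in each monomial we keep $\delta u^j/\partial_y u^s$ grouped, and whenever a $\partial_y$ must cross $1/\partial_y u^s$ we use the commutator estimates \eqref{commutator-1}--\eqref{commutator-2} and the bound \eqref{result-u-s} on $\partial_y^2 u^s/\partial_y u^s$. As a result the quantities $\delta u^j/\partial_y u^s$, $\partial_y(\delta u^j/\partial_y u^s)$ and $\partial_y\delta u^j/\partial_y u^s$ satisfy the same $\mathcal{A}^k_\ell$, $\mathcal{C}^k_\ell$, $\mathcal{D}^{k-1}_0$ and $L^\infty$ bounds, up to fixed constants, as in \eqref{esti-delta-u}--\eqref{D-k-1} — this is exactly how those three estimates are produced from \eqref{wn} in Section~\ref{section7}. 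Second, since $\theta_j=\sqrt{\theta_0^2+j}$ one has $\Delta\theta_j=(\theta_{j+1}+\theta_j)^{-1}\simeq\theta_j^{-1}$, hence $(\Delta\theta_j)^2\le C\,\theta_j^{-1}\Delta\theta_j$; this is what converts a product of two increments into a single $\Delta\theta_j$ with one power of $\theta_j^{-1}$ to spare.

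For $e_j^{(1)}$ I write
\[
\frac{e_j^{(1)}}{\partial_y u^s}=\frac{\delta u^j}{\partial_y u^s}\,\partial_x\delta u^j+\Bigl(\partial_y\tfrac{\delta u^j}{\partial_y u^s}+\tfrac{\delta u^j}{\partial_y u^s}\cdot\tfrac{\partial_y^2 u^s}{\partial_y u^s}\Bigr)\delta v^j .
\]
In the first monomial I split by Lemma~\ref{lemm2.1}, putting all $k_1$ derivatives on one factor and an $L^\infty$ (through $\mathcal{A}^3_\ell\hookrightarrow C_b^1$ and \eqref{Linfty}) on the other: both $\|\delta u^j/\partial_y u^s\|_{L^\infty}$ and $\|\partial_x\delta u^j\|_{L^\infty}$ are $\lesssim\epsilon\,\theta_j^{3-\tilde k}\Delta\theta_j$, while the $\mathcal{A}^{k_1}_\ell$ and $\dot{\mathcal{A}}^{k_1}_\ell$ norms of $\partial_x\delta u^j$ and $\delta u^j/\partial_y u^s$ are $\lesssim\epsilon\,\theta_j^{\max\{3-\tilde k,\,k_1+1-\tilde k\}}\Delta\theta_j$. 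Multiplying and using $(\Delta\theta_j)^2\lesssim\theta_j^{-1}\Delta\theta_j$ gives the claimed $\epsilon^2\theta_j^{\max\{6-2\tilde k,\,k_1+3-2\tilde k\}}\Delta\theta_j$. The second monomial is handled identically with the $(\mathcal{C},\mathcal{D})$--version of Lemma~\ref{lemm2.1}: $\delta v^j$ goes into $\mathcal{D}^0_0$ resp. $\dot{\mathcal{D}}^{k_1}_0$ (bounded by \eqref{esti-delta-v}) and the decaying factor into $\mathcal{C}^{k_1}_\ell$ resp. $\mathcal{C}^0_\ell$ (bounded by \eqref{sob-C} and the consequences of \eqref{wn} noted above, with $\delta v^j=-\int_0^y\partial_x\delta u^j$ and $\ell>1/2$ used to handle the top index $k_1=k_0-1$); since \eqref{esti-delta-v} costs only one extra power of $\theta_j$, again absorbed by $(\Delta\theta_j)^2\lesssim\theta_j^{-1}\Delta\theta_j$, no worse exponent appears, and because $\tilde k\ge7$ the branch $6-2\tilde k$ dominates the small-$k_1$ exponents. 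This yields \eqref{esti-e-1}.

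For $e_j^{(2)}$ the new ingredient is the smoothing gain: to each factor $(1-S_{\theta_j})(u^j-u^s)$ or $(1-S_{\theta_j})v^j$ I apply the second line of \eqref{f2.5} at the largest admissible interpolation order $\alpha=k_0$, so that for $0\le s\le k_0$,
\[
\|(1-S_{\theta_j})(u^j-u^s)\|_{\mathcal{A}^{s}_\ell}\le C_\rho\,\theta_j^{\,s-k_0}\,\|u^j-u^s\|_{\mathcal{A}^{k_0}_\ell}\le C_\rho C_3\,\epsilon\,\theta_j^{\,s-k_0+\max\{0,\,k_0+1-\tilde k\}}=C_\rho C_3\,\epsilon\,\theta_j^{\,s+1-\tilde k},
\]
using \eqref{esti-u-n} and $k_0\ge\tilde k+2$; the analogue for $v^j$ uses \eqref{eq-v-n}. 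Combining with \eqref{esti-delta-u}, \eqref{Linfty}, \eqref{esti-delta-v} through Lemma~\ref{lemm2.1}, a representative term such as $\delta u^j\,\partial_x\bigl((1-S_{\theta_j})(u^j-u^s)\bigr)/\partial_y u^s$ is $\lesssim\|\delta u^j/\partial_y u^s\|_{L^\infty}\,\|(1-S_{\theta_j})(u^j-u^s)\|_{\mathcal{A}^{k_1+1}_\ell}+\|\delta u^j/\partial_y u^s\|_{\mathcal{A}^{k_1}_\ell}\,\|(1-S_{\theta_j})(u^j-u^s)\|_{C_b^1}$, which by the above and $\tilde k\ge7$ is $\lesssim\epsilon^2\theta_j^{\max\{3-\tilde k,\,k_1+5-2\tilde k\}}\Delta\theta_j$; the remaining three monomials (involving $\partial_y(1-S_{\theta_j})(u^j-u^s)$ against $\delta v^j$, and $(1-S_{\theta_j})v^j$ against $\partial_y\delta u^j$) are treated the same way with the $(\mathcal{C},\mathcal{D})$--version of Lemma~\ref{lemm2.1} and \eqref{esti-delta-v}. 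Summing the four monomials gives \eqref{esti-e-2}.

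The main obstacle is not conceptual but the exponent bookkeeping: one must choose, in each monomial, which factor receives the full $k_1$ derivatives and which is measured in $L^\infty$ (or in $\mathcal{C}^0_\ell$/$\mathcal{D}^0_0$), and one must take the smoothing order in \eqref{f2.5} as large as possible, so that the quadratic products land exactly on the stated maxima rather than one power of $\theta_j$ higher; the two mechanisms $\Delta\theta_j\simeq\theta_j^{-1}$ and the freedom $0\le s\le\alpha\le k_0$ in \eqref{f2.5} are used jointly, and the hypotheses $\tilde k\ge7$, $k_0\ge\tilde k+2$ are precisely what make the constant branches $6-2\tilde k$ and $3-\tilde k$ dominate all the exponents generated for small $k_1$ and keep the top index $k_1=k_0-1$ within reach. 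A secondary technical point, already handled by Lemma~\ref{lemm-commutators}, is tracking the division by $\partial_y u^s$, which forces every weighted product to be written in terms of $\delta u^j/\partial_y u^s$ rather than of $\delta u^j$ and $1/\partial_y u^s$ separately.
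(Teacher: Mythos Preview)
Your strategy is the paper's: split each error into bilinear monomials, apply the Morse--type product inequalities of Lemma~\ref{lemm2.1}, feed in the inductive bounds of Lemmas~\ref{lemm-delta-u-j}--\ref{lemm-u-v-n}, use the smoothing loss \eqref{f2.5} on the $(1-S_{\theta_j})$ factors, and convert $(\Delta\theta_j)^2\lesssim\theta_j^{-1}\Delta\theta_j$. The exponent bookkeeping you outline is also the paper's.

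There is, however, one allocation error that recurs in your argument. You assert that $\delta u^j/\partial_y u^s$ enjoys an $\mathcal{A}^{k_1}_\ell$ bound matching \eqref{esti-delta-u}, and you invoke $\|\delta u^j/\partial_y u^s\|_{\mathcal{A}^{k_1}_\ell}$ explicitly in your representative $e_j^{(2)}$ term. This quantity is \emph{not} in a $\langle y\rangle^\ell$--weighted $L^2$ space: $\partial_y u^s$ decays as $y\to\infty$, so dividing by it destroys the decay of $\delta u^j$. This is precisely why Lemma~\ref{lemm-delta-u-j} only controls $\delta u^j/\partial_y u^s$ in $L^\infty$ \eqref{Linfty} and in the \emph{unweighted} $\mathcal{D}^{k-1}_0$ \eqref{D-k-1}. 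The fix, which is what the paper does, is to always park the weight $\langle y\rangle^\ell$ on the factor that genuinely decays --- $\partial_x\delta u^j$, $(1-S_{\theta_j})(u^j-u^s)$, or $\partial_y\delta u^j/\partial_y u^s$ --- and to measure $\delta u^j/\partial_y u^s$ only through \eqref{Linfty} and \eqref{D-k-1}, pairing via the $(\mathcal{C}_\ell,\mathcal{D}_0)$ form of Lemma~\ref{lemm2.1} rather than the $(\mathcal{A}_\ell,L^\infty)$ form. For the term $\delta v^j\,\partial_y\delta u^j/\partial_y u^s$ the paper first derives an intermediate weighted bound
\[
\Bigl\|\frac{\partial_y\delta u^j}{\partial_y u^s}\Bigr\|_{\mathcal{A}^{k_1}_\ell}\lesssim\epsilon\,\theta_j^{\max\{3-\tilde k,\,k_1-\tilde k\}}\Delta\theta_j
\]
from the identity $\partial_y\delta u^j/\partial_y u^s=(\partial_y u^j_{\theta_j}/\partial_y u^s)\,w^j+(\partial_y^2 u^j_{\theta_j}/\partial_y u^s)\int_0^y w^j$, where the weight is carried by $w^j$ and by $\partial_y^2 u^j_{\theta_j}/\partial_y u^s$ (via \eqref{eq-5002} and \eqref{result-u-s}); this is the analogue of your parenthetical rewriting, but the decay now sits on the correct factor. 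Once you make this swap throughout, all your exponents go through unchanged and your argument coincides with the paper's.
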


In summary, for
$$
\tilde{f}^n=\frac{f^n}{\partial_y u^n_{\theta_n}}=
 \frac{(S_{\theta_{n-1}}-S_{\theta_n})\big(\sum^{n-2}_{j=0} e_j
 \big)-S_{\theta_n} e_{n-1}+
(S_{\theta_{n-1}}-S_{\theta_n}){f}^a}{\partial_y u^n_{\theta_n}},
$$
with $e_{n}=e_{n}^{(1)}+e_{n}^{(2)}$, we have

\begin{prop}
Suppose that the main assumption (MA), and
\eqref{wn} for  $w^j$, $0\le j\le n-1$, hold, there exists a constant $C_9>0$ such that
\begin{equation}\label{esti-f-n}
\|\tilde{f}^n\|_{\mathcal{A}_{\ell}^{k}}
\le C_9\epsilon\theta_n^{\max\{3-\tilde{k},\,k-\tilde{k}\}}
\Delta \theta_n\,, \qquad 0\le k\le k_0.
\end{equation}
\end{prop}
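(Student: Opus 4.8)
The plan is to decompose $\tilde{f}^n$ according to its three constituent pieces coming from \eqref{tilde-f-n}, namely
$$
\tilde{f}^n=\frac{(S_{\theta_{n-1}}-S_{\theta_n})\big(\sum^{n-2}_{j=0} e_j\big)}{\partial_y u^n_{\theta_n}}
-\frac{S_{\theta_n} e_{n-1}}{\partial_y u^n_{\theta_n}}
+\frac{(S_{\theta_{n-1}}-S_{\theta_n}){f}^a}{\partial_y u^n_{\theta_n}},
$$
and to estimate each of the three terms in $\mathcal{A}^k_\ell$ for $0\le k\le k_0$. The basic tool in each case is to write $\tfrac{1}{\partial_y u^n_{\theta_n}}=\tfrac{1}{\partial_y u^s}\cdot\tfrac{\partial_y u^s}{\partial_y u^n_{\theta_n}}$, apply the Morse-type inequality of Lemma \ref{lemm2.1} to split the product, and control the factor $\big(\tfrac{\partial_y u^n_{\theta_n}}{\partial_y u^s}\big)^{-1}$ by Lemma \ref{lemm-inverse}. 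The numerators $e_j/\partial_y u^s$ and $f^a/\partial_y u^s$ are then estimated by Lemma \ref{lemm-e-1} and by \eqref{f-a-1} respectively, while the smoothing operators $S_{\theta_n}$, $S_{\theta_{n-1}}-S_{\theta_n}$ produce the gain/loss of derivatives recorded in \eqref{f2.5} and \eqref{f2.5-3}.

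\textbf{First} I would treat the middle term $S_{\theta_n}e_{n-1}/\partial_y u^n_{\theta_n}$: since $k\le k_0$, the bound \eqref{f2.5} on $S_{\theta_n}$ gives $\|S_{\theta_n}(e_{n-1}/\partial_y u^s)\|_{\mathcal{A}^k_\ell}\lesssim \theta_n^{(k-(k_0-1))_+}\|e_{n-1}/\partial_y u^s\|_{\mathcal{A}^{k_0-1}_\ell}$, and feeding in \eqref{esti-e-1}--\eqref{esti-e-2} with $k_1=k_0-1$ yields a bound of the form $\epsilon^2\theta_{n-1}^{\cdots}\Delta\theta_{n-1}$ times powers of $\theta_n$; one then checks, using $\theta_{n-1}\le\theta_n$, $\Delta\theta_{n-1}\simeq\theta_n^{-1}$, and the hypotheses $\tilde k\ge 7$, $k_0\ge\tilde k+2$, that the resulting exponent is $\le\max\{3-\tilde k,k-\tilde k\}$ with one factor of $\epsilon$ to spare (the extra $\epsilon$ being harmless). \textbf{Next}, for the telescoping differences $(S_{\theta_{n-1}}-S_{\theta_n})$ applied to $\sum_{j=0}^{n-2}e_j$ and to $f^a$, I would use \eqref{f2.5-3}: this gives a factor $\theta_n^{k-\alpha}\Delta\theta_n$ for any $\alpha$, so one is free to choose $\alpha$ as large as permitted by the regularity of the argument. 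For $f^a$ one takes $\alpha=k_0$ and uses \eqref{f-a-1}; for $\sum e_j$ one takes $\alpha=k_0-1$, uses \eqref{esti-e-1}--\eqref{esti-e-2}, and sums the resulting geometric-type series in $j$ via the elementary bound \eqref{ineq}. After dividing by $\partial_y u^n_{\theta_n}$ (Lemmas \ref{lemm2.1} and \ref{lemm-inverse}, noting that the norm of $(\partial_y u^n_{\theta_n}/\partial_y u^s)^{-1}$ in $\dot{\mathcal{A}}^k_0$ only costs $\epsilon\theta_n^{\max\{0,k+1-\tilde k\}}$), one again collects powers of $\theta_n$ and checks the exponent is dominated by $\max\{3-\tilde k,k-\tilde k\}$.

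\textbf{The main obstacle} will be the bookkeeping of the exponents of $\theta_n$ — in particular making sure that after multiplying the numerator estimates (which carry exponents like $k_1+3-2\tilde k$ or $k_1+5-2\tilde k$) by the loss $\theta_n^{(k-(k_0-1))_+}$ from the smoothing operators and by the loss $\theta_n^{\max\{0,k+1-\tilde k\}}$ from the reciprocal factor, the total exponent still does not exceed $\max\{3-\tilde k,k-\tilde k\}$ once one also accounts for $\Delta\theta_n\simeq\tfrac12\theta_n^{-1}$. This is precisely where the quantitative hypotheses $\tilde k\ge 7$ and $k_0\ge\tilde k+2$ (equivalently $k_0-1\ge\tilde k+1$) are used: they guarantee enough of a buffer between $k_0$ and $\tilde k$ that the second-order smallness $\epsilon^2$ in the error terms more than compensates for the derivative losses, and that the geometric sums in \eqref{ineq} converge or grow at the claimed rate. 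Once these inequalities among exponents are verified case by case (separating $k=3$, $4\le k\le k_0-1$, and $k=k_0$), adding the three contributions gives \eqref{esti-f-n} with $C_9$ depending only on the constants $C^a$ and $C_7,C_8$ from the preceding lemmas.
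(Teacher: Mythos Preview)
Your overall strategy---splitting $\tilde f^n$ into three pieces, writing $\tfrac{1}{\partial_y u^n_{\theta_n}}=\tfrac{1}{\partial_y u^s}\cdot\big(\tfrac{\partial_y u^n_{\theta_n}}{\partial_y u^s}\big)^{-1}$, and invoking Lemmas~\ref{lemm2.1}, \ref{lemm-inverse}, \ref{lemm-e-1} and \eqref{f-a-1}---matches the paper. However, your choice of regularity level for the middle term $S_{\theta_n}e_{n-1}$ is wrong, and the exponent check you promise at the end will \emph{fail} there.

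Concretely: you propose to estimate $\|S_{\theta_n}(e_{n-1}/\partial_y u^s)\|_{\mathcal A^k_\ell}\lesssim\theta_n^{(k-(k_0-1))_+}\|e_{n-1}/\partial_y u^s\|_{\mathcal A^{k_0-1}_\ell}$, i.e.\ always measuring $e_{n-1}$ at the top regularity $k_0-1$. But Lemma~\ref{lemm-e-1} at $k_1=k_0-1$ gives $\|e_{n-1}/\partial_y u^s\|_{\mathcal A^{k_0-1}_\ell}\lesssim\epsilon^2\theta_{n-1}^{\max\{3-\tilde k,\,k_0+4-2\tilde k\}}\Delta\theta_{n-1}$, and since $k_0\ge\tilde k+2$ the exponent is at least $6-\tilde k$. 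For $k\le k_0-1$ the smoothing costs nothing, so your bound is $\epsilon^2\theta_n^{6-\tilde k}\Delta\theta_{n-1}\simeq\epsilon^2\theta_n^{5-\tilde k}$; the target for $k=3$ is $\epsilon\theta_n^{3-\tilde k}\Delta\theta_n\simeq\epsilon\theta_n^{2-\tilde k}$, smaller by a factor $\theta_n^{3}$. The extra $\epsilon$ does not save you, since $\epsilon$ is fixed while $\theta_n\to\infty$. The point is that for this term there is no $\Delta\theta_n$ gain from the smoothing operator, so the smallness must come from measuring $e_{n-1}$ at \emph{low} regularity, where it is genuinely small.

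The paper's fix is to let the regularity parameter depend on $k$: take $k_2=3$ when $k=3$, and $k_2=\tilde k-2$ when $4\le k\le k_0$. Then $\|e_{n-1}/\partial_y u^s\|_{\mathcal A^{k_2}_\ell}\lesssim\epsilon^2\theta_{n-1}^{3-\tilde k}\Delta\theta_{n-1}$ in both cases, and the smoothing loss $\theta_n^{(k-k_2)_+}$ supplies exactly the growth $\theta_n^{k-\tilde k}$ for large $k$ (this is where one checks $(k-k_2)_++k_2+5-2\tilde k\le k-\tilde k$, which needs $\tilde k\ge 5$). Your choices $\alpha=k_0-1$ and $\alpha=k_0$ for the two telescoping terms do happen to work, because \eqref{f2.5-3} already furnishes the $\Delta\theta_n$ factor; but the paper likewise uses $k_1\ge\tilde k+1$ and $k_3=\tilde k$ there, which is the same idea.
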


\begin{proof} From
$\tilde{f}^n=\frac{f^n}{\partial_y u^n_{\theta_n}}
=\frac{f^n}{\partial_y u^s}\,\Big(\frac{\partial_y u^n_{\theta_n}}
{\partial_y u^s}\Big)^{-1}$, by using Lemma \ref{lemm2.1} and Lemma \ref{lemm-inverse} we have
\begin{align*}
\|\tilde{f}^n\|_{\mathcal{A}_{\ell}^{k}}
&\leq M_k\Big\{
2\|\frac{f^n}{\partial_y u^s}\|_{\mathcal{A}_{\ell}^{k}}+
\|\frac{f^n}{\partial_y u^s}\|_{L^\infty_{\ell}}
C_7\epsilon\, \theta_n^{\max\{0, k+1-\tilde{k}\}}\Big\}.
\end{align*}

On the other hand, using \eqref{f2.5} and \eqref{f2.5-3}, for any $k, k_j\ge 0$ ($j=1,2,3$), we have
\begin{align}\label{esti-f-n-1}
\|\frac{f^n}{\partial_y u^s}\|_{\mathcal{A}_{\ell}^{k}}
\le C_\rho \Big\{
\sum_{j=0}^{n-2} \|\frac{e_j}{\partial_yu^s}
\|_{\mathcal{A}_{\ell}^{k_1}} & \theta_n^{k-k_1}
\Delta \theta_n +\|\frac{e_{n-1}}
{\partial_yu^s}\|_{\mathcal{A}_{\ell}^{k_2}}\theta_n^{(k-k_2)_+}
\\
&+ \|\frac{f^a}{\partial_yu^s}\|_{\mathcal{A}_{\ell}^{
k_3}}\theta_n^{k-k_3}\Delta \theta_n\Big\}.\nonumber
\end{align}

Thus, by using \eqref{esti-e-1}, \eqref{esti-e-2} in \eqref{esti-f-n-1}, we get
\begin{align}
\|\frac{f^n}{\partial_y u^s}\|_{\mathcal{A}_{\ell}^{k}}
&\le C_\rho \Big\{2C_8
\sum_{j=0}^{n-2}\epsilon^2
\theta_j^{\max\{3-\tilde{k},\,k_1+5-2\tilde{k}\}}\Delta \theta_j\theta_n^{k-k_1}
\Delta \theta_n \notag\\
&+2C_8\epsilon^2
\theta_{n-1}^{\max\{3-\tilde{k},\,k_2+5-2\tilde{k}\}}\Delta \theta_{n-1}\theta_n^{(k-k_2)_+}
+C^a\epsilon\,\theta_n^{k-k_3}\Delta \theta_n\Big\},\label{esti-f-n-k}
\end{align}
for $k_1\le k_0-1$ and  $k_2\le k_0-1$, provided $\|\frac{f^a}{\partial_y u^s}
\|_{\mathcal{A}_{l}^{k_3}}\le C^a \epsilon$.

When $k=3$, by setting $k_1=k_3=\tilde{k}$ and $k_2=3$ in \eqref{esti-f-n-k} we get
\begin{align}
\|\frac{f^n}{\partial_y u^s}\|_{\mathcal{A}_{\ell}^{3}}
&\le C_\rho (2C_8(1+
\tilde{C})\epsilon^2+C^a\epsilon)
\theta_n^{3-\tilde{k}}\Delta \theta_n.\label{f-n-3}
\end{align}

When $4\le k\le k_0$, by choosing $k_1\ge 1+\tilde{k}$,
$k_2=\tilde{k}-2$ and $k_3=\tilde{k}$ in \eqref{esti-f-n-k} we obtain
\begin{align}
\|\frac{f^n}{\partial_y u^s}\|_{\mathcal{A}_{\ell}^{k}}
&\le C_\rho (2C_8(1+
\tilde{C})\epsilon^2+C^a\epsilon)
\theta_n^{k-\tilde{k}}\Delta \theta_n.\label{f-n-k}
\end{align}
Here, we have used the fact that $(k-k_2)_++k_2+5-\tilde{k}\le k$ for all $4\le k\le k_0$.

Combining \eqref{f-n-3} with \eqref{f-n-k}, we conclude the estimate \eqref{esti-f-n}.
\end{proof}

\smallskip
\noindent
\underline{\bf Proof of Theorem \ref{theo-wn}:}

We are now ready to conclude the proof of Theorem \ref{theo-wn} by induction on $n$.

For  $n=0$, from the main assumption (MA), Lemma \ref{u-a} and Lemma \ref{f-a}, we get immediately that
for any fixed $T>0$, there is a constant $C^a=C^a(k_0, T)$ such that
$$
\|\tilde{u}^0\|_{\cA_\ell^{k_0+1}}+
\|\frac{\partial_y\tilde{u}^0}
{\partial_yu^s}\|_{\cA_\ell^{k_0+1}}+
\|\frac{f^a}{\partial_yu^s}\|_{\cA_\ell^{k_0}}
\le C^a\epsilon.
$$
This implies that $\tilde{f}^0=\frac{f^a}{\partial_y(u^s+S_{\theta_0}\tilde{u}^0)}$ satisfies
$$
\|\tilde{f}^0\|_{\cA_\ell^{k_0}}
\le \tilde{C}^a\epsilon,
$$
for a constant ${\tilde C}^a$.

A direct calculation yields
$$
\lambda_k^0\le
\|u^s\|_{\mathcal{C}_{0}^{k}}
+\|\frac{\partial_y^2u^s}{\partial_yu^s}\|_{\mathcal{C}_{0}^{k}}
+\bar{C}^a\epsilon  \leq C_{k_0}, \qquad \forall k\le k_0,
$$
for a constant $\bar{C}^a$ depending on $C^a$ and $\tilde{C}^a$ given at above.

By applying Proposition \ref{prop-w-n} for $w^0$, and using the above estimates, it follows
\begin{equation}\label{esti-w-0}
\|w^0\|_{\cA_\ell^k}\le \bar{C}_{k_0} \epsilon, \qquad \forall k\le k_0,
\end{equation}
for a constant $\bar{C}_{k_0}$ depending on $\tilde{C}^a$ and $C_{k_0}$ given above.
Hence, the estimate \eqref{wn} for the case $n=0$ follows immediately from \eqref{esti-w-0}
with a constant $C_0$ depends on $\bar{C}_{k_0}, k_0, \tilde{k}$ and $\theta_0$.

Now, assuming that
\eqref{wn} holds for all $w^j$ with $0\le j\le n-1$, we are going to
prove it for $w^n$.
In fact, from the estimates \eqref{energy-A-1} and \eqref{esti-f-n}, we get
$$\|w^n\|_{\cA_\ell^k}\le C_1(\lambda_3^n)C_9\epsilon \theta_n^{\max\{3-\tilde{k}, k-\tilde{k}\}}
+C_2(\lambda_3^n)\lambda_k^n C_9\epsilon \theta_n^{3-\tilde{k}},$$
which implies by Proposition \ref{prop-lambda-n} that
$$\|w^n\|_{\cA_\ell^3}\le C_0\epsilon \theta_n^{3-\tilde{k}},$$
and
$$\|w^n\|_{\cA_\ell^k}\le C_0\epsilon\theta_n^{k-\tilde{k}},$$
for all $4\le k\le k_0$, with the constant $C_0\ge (C_1(C_6)+C_2(C_6)C_6)C_9$.

\smallskip
\subsection{Convergence of the iteration scheme}

In this subsection, we will prove the convergence of the iteration scheme
and this immediately yields the  existence of classical solutions to the Prandtl equation \eqref{eq-1}.

From the iteration scheme \eqref{u-n+1}-\eqref{prob-u-n} with $f^n$  defined in \eqref{fn-1},
we know that the approximate solution
$$
u^{n+1}=u^s+\tilde{u}^0+\sum\limits_{j=0}^n\delta u^j,\quad
v^{n+1}=v^0+\sum\limits_{j=0}^n\delta v^j,
$$
satisfies
\begin{equation}\label{appro}
\left\{\begin{array}{l}\mathcal{P}(u^{n+1}, v^{n+1})=
(1-S_{\theta_n})\sum^{n}_{j=0}e_j+S_{\theta_n} e_n +
(1-S_{\theta_n}) f^a,\\
\partial_x u^{n+1}+ \partial_y v^{n+1}=0,\\
u^{n+1}|_{y=0}=v^{n+1}|_{y=0}=0,\,\,\lim\limits_{y\to+\infty}
u^{n+1}=1,\\
u^{n+1}|_{t=0}=u_0(x, y)\,.
\end{array}\right.
\end{equation}

From the estimates \eqref{esti-delta-u} and \eqref{esti-delta-v}, we know that there
exist $u\in u^s+\cA_\ell^{\tilde{k}-2}$ and $v\in \cD_0^{\tilde{k}-3}$, such that
$$
\lim\limits_{n\to +\infty}\|u^n-u\|_{\cA_\ell^{\tilde{k}-2}}=0,\quad
\lim\limits_{n\to+\infty}\|v^n-v\|_{\cD_0^{\tilde{k}-3}}=0.
$$

To verify that the limit $(u,v)$ is a classical solution to the problem \eqref{eq-1},
it is enough to show that the right hand side of the equation in \eqref{appro} converges to zero as $n\to+\infty$.

Obviously, we have
$$
\|(1-S_{\theta_n})(f^a+\sum^{n}_{j=0}e_j)\|
_{\mathcal{A}_{\ell}^{k}}
\le \theta_n^{-1}(\|f^a\|_{\mathcal{A}_{\ell}^{k+1}}+
\sum^{n}_{j=0}\|e_j\|_{\mathcal{A}_{\ell}^{k+1}}).
$$
Thus, it is enough to prove the convergence of series
$
\sum^{+\infty}_{j=0}\|e_j\|_{\mathcal{A}_{\ell}^{k+1}}.
$
Recall
$$
e_{j}=e_{j}^{(1)}+e_{j}^{(2)},
$$
with
$$
e^{(1)}_j=\delta u^j\partial_x(\delta u^j)+\delta v^j\partial_y (\delta u^j),
$$
and
\begin{align*}
e_{j}^{(2)}=\partial_y\Big(\delta v^j  \big((1-S_{\theta_j})(u^j-u^s)\big)
+\big((1-S_{\theta_j}) v^j\big) (\delta u^j)\Big).
\end{align*}
By using Lemma \ref{lemm2.1}, it follows that
\begin{align*}
\|e_j^{(1)}\|_{\mathcal{A}_{\ell}^{k+1}}
&\le M_{k}\Big(\|\delta u^j\|_{L^\infty}\|\delta u^j\|_{\mathcal{A}_{\ell}^{k+2}}+
\|\delta v^j\|_{L^\infty} \|\delta u^j\|_{\mathcal{A}_{\ell}^{k+2}}\\
&\qquad+\|\delta v^j\|_{\mathcal{D}_{0}^{k+2}}
\|\delta u^j\|_{L^2_{y, \ell}(L^\infty_{t, x})}\Big)\\
&\le C_{10}\epsilon^2\,\theta_j^{k+5-2\tilde{k}}
\Delta\theta_j,
\end{align*}
 and
\begin{align*}
\|e_j^{(2)}\|_{\mathcal{A}_{\ell}^{k+1}}
&\le M_{k}\Big(\|\delta v^j\|_{L^\infty}\|u^j-u^s\|_{\mathcal{A}_{\ell}^{k+2}}
+\|\delta v^j\|_{\mathcal{D}_{0}^{k+2}}\|u^j-u^s\|_{L^2_{y,\ell}(L^\infty_{t, x})}\\
&+\|v^j\|_{L^\infty}\|\delta u^j\|_{\mathcal{A}_{\ell}^{k+2}}
+\|v^j\|_{\mathcal{D}_{0}^{k+2}}\|\delta u^j\|_{L^2_{y,\ell}(L^\infty_{t, x})}\Big)\\
&\le {C}_{10}\epsilon^2\,\theta_j^{k+3-\tilde{k}}
\Delta\theta_j,
\end{align*}
for a positive constant ${C}_{10}>0$, where we have used
 \eqref{esti-delta-u}, \eqref{esti-delta-v} and \eqref{esti-u-n}. Therefore, we obtain
$$
\sum^{+\infty}_{j=0}\|e_j\|_{\mathcal{A}_{\ell}^{k+1}}\le
{C}\sum^{+\infty}_{j=0}\theta_j^{k+3-\tilde{k}}
\Delta\theta_j\le {C}\tilde{C},
$$
for all $k\le \tilde{k}-5$. And this concludes the convergence of the iteration scheme
and the existence of classical solutions to the Prandtl equation \eqref{eq-1}.

\section{Uniqueness and stability}\label{section6}
\setcounter{equation}{0}

In this section, we study the stability of classical solutions to the Prandtl equation \eqref{eq-1},
and thus the uniqueness of the classical solution obtained in Section \ref{section5} will follow immediately.

Let $(u^1, v^1)$ and $(u^2, v^2)$ be two classical solutions to the problem \eqref{eq-1}
in the solution spaces given in Theorem \ref{theo1} with the initial data $u_0^1(x,y)$
and $u_0^2(x,y)$ as two small perturbations of $u_0^s(y)$ as stated in Theorem \ref{theo1}.
Denoting by
$$
u=u^1-u^2,\quad v=v^1-v^2,\quad \tilde{u}=\frac{u^1+u^2}{2},\quad
\tilde{v}=\frac{v^1+v^2}{2},
$$
then from \eqref{eq-1}, we deduce that $(u, v)$ satisfies
\begin{equation}\label{eq6.1}
\left\{\begin{array}{l} \partial_t u + \tilde{u} \partial_xu+\tilde{v}\partial_yu
+u\partial_x \tilde{u}
+v\partial_y \tilde{u}
-\partial^2_y u =0, \\
\partial_x u+\partial_y v=0,\\
u|_{y=0} =v|_{y=0}=0, \quad \lim\limits_{y\to+\infty} u =0, \\
u|_{t=0} =u_0(x,y)\, :=u_0^1-u_0^2\,.
\end{array}\right.
\end{equation}

As in  Section \ref{s3}, set
\begin{equation}\label{unknown-1}
w(t,x,y) = \left( \frac{u}{\partial_y \tilde{u}} \right)_y(t, x, y)
\quad \mbox{that is,}\quad
u(t, x, y)=(\partial_y \tilde{u}) \int^y_0 w(t,x, y')d y'.
\end{equation}

Then, from \eqref{eq6.1} we know that $w(t,x,y)$ satisfies
\begin{equation}\label{eq6.3}
\left\{\begin{array}{l}
\partial_t w + \partial_x (\tilde{u} w)+\partial_y(\tilde{v} w)-2
\partial_y (\eta w)
+\partial_y(\zeta\int^y_0 w(t, x, \tilde{y})d\tilde{y})-\partial^2_y w
=0,\\
\big(\partial_y w+2\eta w\big) |_{y=0} = 0, \\
w|_{t=0} =w_0(x,y)=\left( \frac{u_0}{\partial_y \tilde{u}} \right)_y(x, y), \end{array}\right.
\end{equation}
where
$$
\eta=\frac{\partial^2_y \tilde{u}}{\partial_y \tilde{u}},\quad \zeta
=\frac{\big(\partial_t+\tilde{u} \partial_x+
\tilde{v} \partial_y -\partial^2_y\big) \partial_y \tilde{u}}
{\partial_y \tilde{u}}.
$$

Similar to the  proof for \eqref{energy-A} in the problem \eqref{eq6.3}, it follows
\begin{equation}\label{esti-w-A}
\|w\|_{\mathcal{A}^k_{\ell}([0,T]\times \RR^2_+)} \le C(T)\|w_0\|_{\mathcal{A}^{k}_{\ell}(\RR^2_+)},
\qquad k\le \tilde{k}-3,
\end{equation}
for a constant $C(T)$ depending on $T>0$ and the norms of the initial data $u_0^1, u_0^2$ in the spaces
given in the existence part of Theorem \ref{theo1}.

From \eqref{esti-w-A}, and the transformation \eqref{unknown-1}, we deduce
$$
\|u^1-u^2\|_{\cA_\ell^{k}([0,T]\times \RR_+^2)}+\|v^1-v^2\|_{ \cD_0^{k-1}([0,T]\times\RR_+^2)}\le C\|\frac{\partial }{\partial y}\left(\frac{u^1_0-u_0^2}{\partial_y u_0^s}\right)\|_{\cA_\ell^k(\RR^2_+)},
$$
for all $k\le \tilde{k}-3$. And this concludes the uniqueness and stability  results
stated in Theorem \ref{theo1}.

\section{Proof of some technical estimates}\label{section7}
\setcounter{equation}{0}

Finally, in this section, we give
the proofs for Lemmas \ref{lemm-delta-u-j}, \ref{lemm-eta-n}, \ref{lemm-zeta}, \ref{lemm-inverse}
and \ref{lemm-e-1} stated in  Section \ref{section5} about the iteration scheme \eqref{u-n+1}-\eqref{prob-u-n}.

We start with the proof for Lemma \ref{lemm-delta-u-j}.

\smallskip
\noindent\underline{\bf Proof of Lemma \ref{lemm-delta-u-j}:}
Let us first prove the estimate \eqref{esti-delta-u}. First
of all, it holds true for $\delta u^0$. Indeed, from
$$
\delta u^0=\partial_y (u^s+S_{\theta_0}{\tilde u}^0)\int_0^y w^0 d\tilde{y},
$$
by using Lemma \ref{lemm2.1} and the Sobolev embedding theorem, we have
\begin{align*}
\|\delta u^0\|_{\mathcal{A}_{l}^{k}}&\le
C^0_k\|w^0\|_{\mathcal{A}_{l}^{k}}\le C^0_k C_0
\epsilon \theta_0^{\max\{3-\tilde{k},\,k-\tilde{k}\}}\Delta \theta_0, \qquad k\ge 2,
\end{align*}
with
\begin{equation*}
C^0_k=\|u^s\|_{\mathcal{C}^{k+1}_{\ell}}+\|\tilde{u}^0\|_{\mathcal{A}^{k+1}_{\ell}}.
\end{equation*}
The estimate \eqref{esti-delta-u} holds obviously for $\delta u^0$ when $k=0,1$.

Now, suppose that \eqref{esti-delta-u} holds for  $\delta u^{p}$,
 $0\le p\le j-1$. We estimate $\delta u^{j}$ as follows.

Recalling $w^j=(\frac{\delta u^j}{\partial_y (u_{\theta_j}^j)})_y$, we have
$$
\delta u^j=\partial_y( u_{\theta_j}^j)\int_0^y w^j d\tilde{y}.
$$

Using again Lemma \ref{lemm2.1}, the Sobolev embedding theorem and \eqref{ineq},
it follows that, for $k\ge 4$ and $\tilde{k}\ge 6$,
\begin{equation*}
\begin{array}{lll}
\|\delta u^j\|_{\mathcal{A}_{l}^{k}}&\le & M_k((C^0_k+\|\partial_y(u_{\theta_j}^j-u^s)
\|_{\mathcal{A}_{l}^{k}})
\|w^j\|_{\mathcal{A}_{l}^{2}}+\|\partial_yu_{\theta_j}^j
\|_{L^2_{y,\ell}(L^\infty_{t,x})}
\|w^j\|_{\mathcal{A}_{l}^{k}})\\
&\le & M_k((C^0_k+ C_\rho\theta_j
\sum\limits_{p=0}^{j-1}\|\delta u^p \|_{\mathcal{A}_{l}^{k}})
\|w^j\|_{\mathcal{A}_{l}^{2}}
+\|\partial_yu^j\|_{L^2_{y,\ell}(L^\infty_{t,x})}\|w^j
\|_{\mathcal{A}_{l}^{k}})\\
&\le & C_0M_kC_\rho(C_{k}^0+C_1\epsilon \, \theta_j\sum^{j-1}_{p=0}
\theta_p^{\max\{3-\tilde{k},\,k-\tilde{k}\}}\Delta \theta_p)\epsilon \theta_j^{3-\tilde{k}}\Delta \theta_j
\\
& &
+C_0C_\rho M_k(C_{3}^0+C_1\epsilon\sum^{j-1}_{p=0}
\theta_p^{3-\tilde{k}}\Delta \theta_p)\epsilon
\theta_j^{\max\{3-\tilde{k},\,k-\tilde{k}\}}\Delta \theta_j\\
&\le & \epsilon C_0C_\rho M_k(C_{k}^0+C_1\tilde{C}\epsilon\,
\theta_j^{1+\max\{0,\,k+1-\tilde{k}\}}
) \theta_j^{3-\tilde{k}}\Delta \theta_j\\
& &
+\epsilon C_0C_\rho M_k(C_{3}^0
+C_1\tilde{C}\epsilon)
\theta_j^{\max\{3-\tilde{k},\,k-\tilde{k}\}}\Delta \theta_j\\
&\le & \epsilon C_0C_\rho M_k(C_{k}^0\theta_0^{-1}+C^0_3+2C_1\tilde{C}\epsilon)
\theta_j^{k-\tilde{k}}\Delta \theta_j,\end{array}
\end{equation*}
where the constant $C_\rho$ comes from \eqref{f2.5}.

By setting $C_1= 4C_0C_\rho M_kC^0_3$, we can choose
$0<\epsilon\le \epsilon_0$ and $\theta_0>0$ large enough such that
\begin{equation*}
C_0C_\rho M_k(C_{k}^0\theta_0^{-1}+C^0_3+2C_1\tilde{C}\epsilon)\le C_1.
\end{equation*}
Therefore,  we get
\begin{equation*}
\|\delta u^j\|_{\mathcal{A}_{l}^{k}}\le
C_1\epsilon \theta_j^{ k-\tilde{k}}\Delta \theta_j,
 \end{equation*}
for $k\ge 4$.
On the other hand, we have
\begin{equation*}
\begin{array}{lll}
\|\delta u^j\|_{\mathcal{A}_{\ell}^{3}}
&\le & M_k(C^0_3+ C_\rho
\sum\limits_{p=0}^{j-1}\|\delta u^p \|_{\mathcal{A}_{\ell}^{4}})
\|w^j\|_{\mathcal{A}_{\ell}^{2}}
+M_k\|\partial_yu^j\|_{L^2_{y,\ell}(L^\infty_{t,x})}\|w^j
\|_{\mathcal{A}_{\ell}^{3}}\\
&\le & C_0M_kC_\rho(C_{3}^0+C_1\epsilon \, \sum^{j-1}_{p=0}
\theta_p^{4-\tilde{k}}\Delta \theta_p)\epsilon \theta_j^{3-\tilde{k}}\Delta \theta_j
\\
& &
+C_0C_\rho M_k(C_{3}^0+C_1\epsilon\sum^{j-1}_{p=0}
\theta_p^{3-\tilde{k}}\Delta \theta_p)\epsilon
\theta_j^{3-\tilde{k}}\Delta \theta_j\\
&\le & C_1\epsilon \theta_j^{3-\tilde{k}}\Delta \theta_j,\end{array}
\end{equation*}
for $\tilde{k}\ge 6$, by choosing a proper constant $C_1>0$.
And this completes the proof of the estimate \eqref{esti-delta-u}.

We now turn to  the estimates \eqref{Linfty} and \eqref{D-k-1}.
When  $j=0$, from
\begin{equation}\label{u-0}
\frac{\delta u^0}{\partial_y u^s}=(1+\frac{\partial_yS_{\theta_0}
(\tilde{u}^0)}{\partial_y u^s})\int_0^y
w^0(t,x,\tilde{y})d\tilde{y},
\end{equation}
we have, by using the Sobolev embedding theorem and for some $\ell>1/2$,  that
$$
\begin{array}{ll}
\|\frac{\delta u^0}{\partial_y u^s}\|_{L^\infty}& \le (1+
\|\frac{\partial_yS_{\theta_0}(\tilde{u}^0)}{\partial_y u^s}
\|_{L^\infty})\bar{C}_\ell\|
w^0\|_{L^2_{y,\ell}(L^\infty_{t,x})}\\
& \le(1+\|\frac{\partial_y
S_{\theta_0}(\tilde{u}^0)}{\partial_y u^s}\|_{L^\infty})
\bar{C}_\ell \|w^0\|_{\cA ^{2}_{\ell}},\end{array}
$$
where $\bar{C}_\ell=(\int_0^{+\infty}(1+y^2)^{-\ell}dy)^{\frac 12}$.
The estimate \eqref{Linfty} with $j=0$ follows immediately by choosing
\begin{equation*}
C_1\ge C_0\bar{C}_\ell(1+\|\frac{\partial_yS_{\theta_0}(\tilde{u}^0)}
{\partial_y u^s}\|_{L^\infty}).
\end{equation*}

Applying Lemma \ref{lemm2.1} to \eqref{u-0} gives
\begin{align*}
&\sum\limits_{k_1+[\frac{k_2+1}{2}]\le k-1}\|
\partial_{\mathcal T}^{k_1}\partial_y^{k_2}
\left(\frac{\delta u^0}{\partial_y u^s}\right)
\|_{L^\infty_y(L^2_{t,x})}\\
&\hspace{.1in}\leq M_k\bar{C}_\ell\left\{ \sum\limits_{1\leq k_1+
[\frac{k_2+1}{2}]\le k-1}\|\partial_{\mathcal T}^{k_1}
\partial_y^{k_2}\left(\frac{\partial_yS_{\theta_0}
(\tilde{u}^0)}{\partial_y u^s}\right)\|_{L^\infty}\|w^0\|_{L^2_\ell}\right.\\
&\hspace{.7in}\left.+(1+\|\frac{\partial_yS_{\theta_0}(\tilde{u}^0)}
{\partial_y u^s}\|_{L^\infty})\|w^0\|_{\cA ^{k-1}_{\ell}}\right\}.
\end{align*}
Then this yields the estimate \eqref{D-k-1} with $j=0$  by choosing
\begin{equation*}
C_1\ge M_kC_0\bar{C}_\ell\Big\{1+ \sum\limits_{0\leq k_1+
[\frac{k_2+1}{2}]\le k-1}\|\partial_{\mathcal T}^{k_1}
\partial_y^{k_2}\left((1+\frac{\partial_yS_{\theta_0}
(\tilde{u}^0)}{\partial_y u^s})\right)\|_{L^\infty}
\Big\}.
\end{equation*}

When  $1\le j\le n-1$, by definition, we have
\begin{equation}\label{delta-uj}
\frac{\delta u^j}{\partial_y u^s}=\frac{\partial_yu^j_{\theta_j}}
{\partial_y u^s}\int_0^y w^j(t,x,\tilde{y})d\tilde{y}.
\end{equation}
By using Lemma \ref{lemm2.1} and the assumptions on $w^j$, it
is sufficient to obtain the bounds of
$$
\frac{\partial_yu^j_{\theta_j}}{\partial_y u^s}=1+\frac{\partial_y
S_{\theta_j}(\tilde{u}^0)}{\partial_y u^s}+\sum\limits_{p=0}^{j-1}
\frac{\partial_y S_{\theta_j}\delta u^p}{\partial_y u^s}
$$
in the spaces $L^\infty$ and $\cD_0^{k-1}$ respectively.

To obtain the estimate \eqref{Linfty} with index $j$,
suppose  that \eqref{Linfty} holds for
$\frac{\delta u^p}{\partial_y u^s}$ with $0\leq p\le j-1$
and prove it by induction. Note that $j=0$ holds by the above
argument.

For this, first we show

\begin{lemm}\label{lemm5001}
Suppose that the estimate \eqref{wn} holds for  $w^j$,
$0\le j\le n-1$, with $\tilde{k}\ge 7$,
then there exists a constant $M_s$, such that for all $0\le j\le n$,
\begin{align}
\|\frac{\partial_yu^j_{\theta_j}}{\partial_y u^s}\|_{L^\infty}\leq M_s
\label{eq-5001}
\end{align}
\end{lemm}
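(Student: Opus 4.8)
The plan is to prove \eqref{eq-5001} by induction on $j$, run simultaneously with the bounds \eqref{Linfty}--\eqref{D-k-1} of Lemma~\ref{lemm-delta-u-j}, so that at stage $j$ one may use, for every $0\le p\le j-1$, the already-established control of $\delta u^p/\partial_y u^s$ in $L^\infty$ and in $\cD^{\cdot}_0$, together with \eqref{wn} and \eqref{esti-delta-u} for $w^p$ and $\delta u^p$ and the corresponding bound on $\eta^p=\partial_y^2 u^p_{\theta_p}/\partial_y u^p_{\theta_p}$. Write $a^p:=\partial_y u^p_{\theta_p}/\partial_y u^s$, so that \eqref{eq-5001} is exactly $\|a^j\|_{L^\infty}\le M_s$. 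The $j$-independent inputs are the bound $\|\partial_y\tilde u^0/\partial_y u^s\|_{\cA^{k_0+1}_\ell}\le C\epsilon$ from \eqref{f-a-1}, the smoothing estimates \eqref{f2.5}, the commutator estimates of Lemma~\ref{lemm-commutators} (and the explicit $\theta^{-1}$-gaining form read off from their proof), the embeddings \eqref{sob-C}, and the product rule of Lemma~\ref{lemm2.1}. One starts from
\[
a^j=\frac{\partial_y u^j_{\theta_j}}{\partial_y u^s}
=1+\frac{\partial_y S_{\theta_j}\tilde u^0}{\partial_y u^s}
+\sum_{p=0}^{j-1}\frac{\partial_y S_{\theta_j}\,\delta u^p}{\partial_y u^s},
\]
and the goal is to show the last two contributions are $O(\epsilon)$ in $L^\infty$, uniformly in $j$ and $n$ (the case $j=0$, already treated after \eqref{u-0}, being the sum-free instance). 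One may then take $M_s=2$ for $\epsilon$ small, consistently with the bound $2$ appearing in Lemma~\ref{lemm-inverse}.

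For each term I would commute $S_{\theta_j}$ past $\partial_y$ and split the factor $1/\partial_y u^s$ off the mollifier, writing for a generic $b$
\[
\frac{S_{\theta_j}b}{\partial_y u^s}
=S_{\theta_j}\Big(\frac{b}{\partial_y u^s}\Big)
+\Big[\frac{1}{\partial_y u^s},\,S_{\theta_j}\Big]b .
\]
With $b=\partial_y\tilde u^0$: the first summand is bounded in $\cA^2_\ell$, hence in $L^\infty$ by \eqref{sob-C}, by $C_\rho\|\partial_y\tilde u^0/\partial_y u^s\|_{\cA^2_\ell}\le C\epsilon$ from the no-loss case of \eqref{f2.5} and \eqref{f-a-1}, and the commutator is bounded, by the argument proving Lemma~\ref{lemm-commutators}, by $C\,\theta_j^{-1}\|\partial_y\tilde u^0/\partial_y u^s\|_{\cA^2_\ell}\le C\epsilon$. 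With $b=\partial_y\delta u^p$: using the formula behind \eqref{delta-uj}, namely $\delta u^p=\partial_y u^p_{\theta_p}\int_0^y w^p\,d\tilde y$, one gets $\partial_y\delta u^p/\partial_y u^s=\eta^p a^p\int_0^y w^p\,d\tilde y+a^p w^p$; Lemma~\ref{lemm2.1}, the Hardy-type bound $\|\int_0^y w^p\,d\tilde y\|_{\cD^k_0}\lesssim\|w^p\|_{\cA^{k}_\ell}$ (valid for $\ell>\tfrac12$), the inductive control of $a^p$ and $\eta^p$, and \eqref{wn} then give $\|\partial_y\delta u^p/\partial_y u^s\|_{\cA^2_\ell}\lesssim\epsilon\,\theta_p^{3-\tilde k}\Delta\theta_p$, so that both $S_{\theta_j}(\partial_y\delta u^p/\partial_y u^s)$ (by the no-loss case of \eqref{f2.5}) and the commutator term (by the proof of Lemma~\ref{lemm-commutators}) obey
\[
\Big\|\frac{\partial_y S_{\theta_j}\,\delta u^p}{\partial_y u^s}\Big\|_{L^\infty}
\lesssim \epsilon\,\theta_p^{\,3-\tilde k}\Delta\theta_p .
\]
Summing over $0\le p\le j-1$ and invoking \eqref{ineq} with $3-\tilde k\le-4$ gives a bound $\le C\epsilon$ independent of $j$ and $n$, which closes the induction.

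I expect the main obstacle to be the degeneracy $\partial_y u^s\to 0$ as $y\to+\infty$: one cannot control $\partial_y S_{\theta_j}(\delta u^p)$ divided by $\partial_y u^s$ by a naive product estimate without destroying the weighted decay on which the whole scheme rests. This forces the argument to be organized around quantities of the form $(\,\cdot\,)/\partial_y u^s$ and around the commutator estimates of Lemma~\ref{lemm-commutators}, whose proof trades $1/\partial_y u^s$ tested against a mollifier for the harmless ratios $\partial_y u^s(\,\cdot\,,y+\bar y)/\partial_y u^s(\,\cdot\,,y)$ and $\partial_y u^s(\,\cdot+\bar t,\,\cdot\,)/\partial_y u^s$, which are bounded for $|\bar y|,|\bar t|\le\theta_0^{-1}$ by the shear-flow estimates of Section~\ref{s2.2-1}. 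The secondary, purely organizational difficulty is that \eqref{eq-5001} is itself used inside the proof of \eqref{Linfty}--\eqref{D-k-1}: the induction on $j$ must be arranged so that at stage $j$ only quantities of index $\le j-1$, together with the $j$-independent bounds \eqref{wn}, \eqref{esti-delta-u} and \eqref{f-a-1}, are invoked, and the smallness of $\epsilon$ must be fixed once and for all so as to absorb every accumulated constant at the same time.
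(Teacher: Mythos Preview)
Your plan is sound and follows the same skeleton as the paper's proof: induction on $j$, the commutator split
\[
\frac{\partial_y S_{\theta_j}\delta u^p}{\partial_y u^s}
=S_{\theta_j}\!\Big(\frac{\partial_y\delta u^p}{\partial_y u^s}\Big)
+\Big[\frac{1}{\partial_y u^s},\,S_{\theta_j}\Big]\partial_y\delta u^p,
\]
and the representation $\delta u^p=\partial_y u^p_{\theta_p}\int_0^y w^p$. The substantive difference is in how the term coming from $\partial_y a^p$ (equivalently, $\eta^p a^p\int_0^y w^p$) is controlled. You package it as $\eta^p a^p$ and appeal to ``inductive control of $\eta^p$''. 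The paper instead expands $\partial_y a^p$ once more into $\partial_y\!\big(\frac{\partial_y S_{\theta_p}\tilde u^0}{\partial_y u^s}\big)+\sum_{q\le p-1}\partial_y\!\big(\frac{\partial_y S_{\theta_p}\delta u^q}{\partial_y u^s}\big)$ and exploits the \emph{inner} mollifier $S_{\theta_p}$: trading two $y$-derivatives of $S_{\theta_p}\delta u^q$ for a factor $\theta_p^2$ and the already-proved scalar bound $\|\delta u^q/\partial_y u^s\|_{L^\infty}\lesssim\epsilon\theta_q^{3-\tilde k}\Delta\theta_q$, together with the commutator estimate \eqref{commutator-2}. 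This keeps the induction small: only \eqref{eq-5001} and \eqref{Linfty} at earlier indices are needed, never any Sobolev control of $\eta^p$ or $a^p-1$.

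Your route works but forces a larger simultaneous induction: to bound $\|\eta^p a^p\int_0^y w^p\|_{\cA^2_\ell}$ via Lemma~\ref{lemm2.1} you need $\eta^p a^p=\partial_y^2 u^p_{\theta_p}/\partial_y u^s$ in $\cC^2_\ell$, hence (after splitting off the $u^s$ part) $\partial_y^2(u^p_{\theta_p}-u^s)/\partial_y u^s$ in $\cA^4_\ell$. That is precisely the content of \eqref{eq-5028} (via \eqref{eq-5002}), which the paper establishes \emph{after} Lemma~\ref{lemm5001}; you would have to fold those estimates into the induction as well. Also note that \eqref{eq-5028} at $k=4$ gives a bound $\lesssim\epsilon\theta_p$, so your claimed rate $\theta_p^{3-\tilde k}$ for $\|\partial_y\delta u^p/\partial_y u^s\|_{\cA^2_\ell}$ should read $\theta_p^{4-\tilde k}$; since $\tilde k\ge 7$ this is still summable by \eqref{ineq} and the conclusion is unaffected. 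In short: same idea, but the paper's double-expansion plus $\theta_p^2$-gain is more self-contained and avoids circulating the $\eta^p$ estimates through the induction.
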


For continuity of the presentation, we postpone the proof of Lemma \ref{lemm5001}
later. Then by Lemma \ref{lemm5001}, with
 \eqref{delta-uj}, it follows
$$
\|\frac{\delta u^j}{\partial_y u^s}\|_{L^\infty}\le M_s\bar{C}_\ell C_0
\epsilon\,\theta^{3-\tilde{k}}_j\Delta\theta_j\leq C_1\epsilon\,
\theta^{3-\tilde{k}}_j\Delta\theta_j,
$$
with
\begin{equation*}
C_1\ge M_s\bar{C}_\ell C_0.
\end{equation*}

To prove \eqref{D-k-1} with index $j$, we also
suppose that it holds for
$\frac{\delta u^p}{\partial_y u^s}$, $0\leq p\le j-1$.

Then, obviously, we have the identity
\begin{equation}\label{coef}
\begin{array}{l}
\sum\limits_{p=0}^{j-1}\frac{\partial_y S_{\theta_j}\delta u^p}
{\partial_y u^s} =
\sum\limits_{p=0}^{j-1}\left\{S_{\theta_j}(\frac{\partial_y(\delta u^p)}
{\partial_y u^s})
+[\frac{1}{\partial_y u^s}, S_{\theta_j}](\partial_y(\delta u^p))\right\}\\
\quad = \sum\limits_{p=0}^{j-1}\left\{S_{\theta_j}(\partial_y\frac{\delta u^p}
{\partial_y u^s})
+S_{\theta_j}(\frac{\delta u^p}{\partial_y u^s}\,\frac{ \partial^2_y u^s}
{\partial_y u^s})
+[\frac{1}{\partial_y u^s}, S_{\theta_j}](\partial_y(\delta u^p))\right\}.
\end{array}
\end{equation}

Thus, we have
\begin{align*}
&\|
\sum\limits_{p=0}^{j-1}\partial_{\mathcal T}^{k_1}\partial_y^{k_2}
\left(\frac{\partial_y S_{\theta_j}\delta u^p}{\partial_y u^s}\right)
\|_{L_y^\infty(L^2_{t,x})}\\
&\le  \tilde{B}\sum\limits_{p=0}^{j-1}\left\{
\|\partial_{\mathcal T}^{k_1}\partial_y^{k_2}S_{\theta_j}
\partial_y\Big(\frac{\delta u^p}{\partial_y u^s}
\Big)\|_{L_y^\infty(L^2_{t,x})}
+\|\partial_{\mathcal T}^{k_1}\partial_y^{k_2}
\Big(\frac{\delta u^p}{\partial_y u^s}\Big)\|_{L_y^\infty(L^2_{t,x})}\right\}
\\
&\le  \tilde{B}\sum\limits_{p=0}^{j-1}\left\{\theta_j\|
\partial_{\mathcal T}^{k_1}\partial_y^{k_2}
\Big(\frac{\delta u^p}{\partial_y u^s}\Big)\|_{L_y^\infty(L^2_{t,x})}
+\|\partial_{\mathcal T}^{k_1}\partial_y^{k_2}
\Big(\frac{\delta u^p}{\partial_y u^s}\Big)\|_{L_y^\infty(L^2_{t,x})}\right\},
\end{align*}
where the constant $\tilde{B}$ depends on the commutators
in \eqref{coef} which is
independent of $j$ and $p$.  Using the induction hypothesis for \eqref{D-k-1}, we have
\begin{align*}
&\sum\limits_{p=0}^{j-1}\sum\limits_{k_1+[\frac{k_2+1}{2}]\le k-1}
\|\partial_{\mathcal T}^{k_1}\partial_y^{k_2}\Big(\frac{\delta u^p}
{\partial_y u^s}\Big)\|_{L_y^\infty(L^2_{t,x})}\\
&\qquad\leq
C_1\epsilon\sum\limits_{p=0}^{j-1}\theta_p^{\max\{3-\tilde{k},
k-1-\tilde{k}\}}\Delta\theta_p
\leq C_1\tilde{C}\epsilon\,\theta_j^{\max\{0,
k-\tilde{k}\}}.
\end{align*}
Therefore, we deduce
\begin{align}\label{eq5.20}
&\sum\limits_{k_1+[\frac{k_2+1}{2}]\le k-1}\|\partial_{\mathcal T}^{k_1}
\partial_y^{k_2}\Big(\frac{\partial_y u^j_{\theta_j}}{\partial_y u^s}
\Big)\|_{L_y^\infty(L^2_{t,x})}\notag\\
&\hspace{.5in}\leq \sum\limits_{k_1+[\frac{k_2+1}{2}]\le k-1}\Big\{
\|\partial_{\mathcal T}^{k_1}\partial_y^{k_2}\left(\frac{\partial_y
S_{\theta_j}\tilde{u}^0}{\partial_y u^s}\right)\|_{L_y^\infty(L^2_{t,x})}
\notag\\
&\hspace{.5in}
\qquad\qquad+\|\sum\limits_{p=0}^{j-1}\partial_{\mathcal T}^{k_1}
\partial_y^{k_2}\left(\frac{\partial_y S_{\theta_j}\delta u^p}
{\partial_y u^s}\right)\|_{L_y^\infty(L^2_{t,x})}\Big\}\\
&\hspace{.5in}\leq C^a\epsilon+2\tilde{B}\tilde{C}C_1\epsilon \theta_j^{\max\{1,
k+1-\tilde{k}\}}.\notag
\end{align}
Now using Lemma \ref{lemm2.1}, it follows
\begin{align}\label{eq6.29}
&\sum\limits_{k_1+[\frac{k_2+1}{2}]\le k-1}\|\partial_{\mathcal T}^{k_1}
\partial_y^{k_2}\left(\frac{\delta u^j}{\partial_y u^s}\right)
\|_{L^\infty_y(L^2_{t,x})}\\
&\le
M_k\Big\{\sum\limits_{k_1+[\frac{k_2+1}{2}]\le k-1}\bar{C}_\ell
\Big\|\partial_{\mathcal T}^{k_1}\partial_y^{k_2}
\Big(\frac{\partial_y u^j_{\theta_j}}{\partial_y u^s}\Big)
\Big\|_{L^\infty_y(L^2_{t,x})}\|
w^j\|_{L^2_{y, \ell}(L^\infty_{t, x})}\notag\\
&+\sum\limits_{k_1+[\frac{k_2+1}{2}]\le k-1}\Big\|
\Big(\frac{\partial_y u^j_{\theta_j}}{\partial_y u^s}
\Big)\Big\|_{L^\infty}\Big\|\Big(\partial_{\mathcal T}^{k_1}
\partial_y^{k_2}\int_0^y
w^j(t,x,\tilde{y})d\tilde{y}\Big)\Big\|_{L^\infty_{y}(L^2_{t,x})}\Big\}\notag\\
&\le M_k\bar{C}_\ell\Big((C^a+2\tilde{B}\tilde{C}C_1
\theta_j^{\max\{1,\, k+1-\tilde{k}\}} )\epsilon\,
\|w^j\|_{{\mathcal A}^{2}_{\ell}}+
M_s\|w^j\|_{{\mathcal A}^{k-1}_{\ell}}\Big)\notag
\\
&\le C_1\epsilon\,\theta_j^{\max\{4-\tilde{k},\, k-1-\tilde{k}\}}\Delta\theta_j,\notag
\end{align}
by choosing $C_1$ satisfying
$$
C_1\ge M_kC_0\bar{C}_\ell ((C^a+2\tilde{B}\tilde{C}C_1)\epsilon+M_s).
$$
Then, we have proved \eqref{D-k-1} for $4\le k-1\le k_0-1$.

On the other hand, similar to the argument
to derive \eqref{eq5.20} from \eqref{coef}, one can deduce
$$
\sum\limits_{k_1+[\frac{k_2+1}{2}]\le 3}\|\partial_{\mathcal T}^{k_1}
\partial_y^{k_2}\Big(\frac{\partial_y u^j_{\theta_j}}{\partial_y u^s}
\Big)\|_{L_y^\infty(L^2_{t,x})}\le C\epsilon,
$$
for a positive constant $C>0$. Thus,  as in \eqref{eq6.29}, we obtain
\begin{equation*}
\|\frac{\delta u^j}{\partial_y u^s}
\|_{{\cD}^{3}_{0}}\le C_1\epsilon
\,\theta_j^{3-\tilde{k}}
\Delta\theta_j.
\end{equation*}
This completes the proof of Lemma \ref{lemm-delta-u-j}.

We now turn to the proof of Lemma \ref{lemm5001}.

\smallskip
\noindent\underline{\bf Proof of Lemma \ref{lemm5001}:}
The case when $0\le j\le 1$ is obvious. From \eqref{coef}, we have
\begin{align*}
&\|\sum\limits_{p=0}^{j-1}\frac{\partial_y S_{\theta_j}\delta u^p}
{\partial_y u^s}\|_{L^\infty}
\le  \sum\limits_{p=0}^{j-1}\Big\{\|S_{\theta_j}(\partial_y
\frac{\delta u^p}{\partial_y u^s})\|_{L^\infty}
\\
&\qquad+\|S_{\theta_j}(\delta u^p\frac{ \partial^2_y u^s}
{(\partial_y u^s)^2})\|_{L^\infty}
+\|[\frac{1}{\partial_y u^s}, S_{\theta_j}](\partial_y(\delta u^p))
\|_{L^\infty}\Big\}\\
&\le \sum\limits_{p=0}^{j-1}\Big\{\|S_{\theta_j}(\partial_y
\frac{\delta u^p}{\partial_y u^s})\|_{L^\infty}
+B
\|\frac{\delta u^p}{\partial_y u^s}\|_{L^\infty}\Big\},
\end{align*}
where we have used the estimate of commutators associated with the
mollifier $S_{\theta_j}$ given in Lemma \ref{lemm-commutators}.

Suppose now \eqref{eq-5001} holds for $0\leq p\leq j-1$. Let us check the case
when $p=j$. Obviously, we have
\begin{align}\label{smooth-AB}
&S_{\theta_j}(\partial_y\frac{\delta u^p}{\partial_y u^s})
= S_{\theta_j}\partial_y\left(\frac{\partial_yu^p_{\theta_p}}
{\partial_y u^s}\int_0^y
w^p(t,x,\tilde{y})d\tilde{y}\right)\\
&= S_{\theta_j}\left(\Big(\partial_y\frac{\partial_yu^p_{\theta_p}}
{\partial_y u^s}\Big)\int_0^y
w^p(t,x,\tilde{y})d\tilde{y}\right)+ S_{\theta_j}
\left(\frac{\partial_yu^p_{\theta_p}}{\partial_y u^s}
w^p\right).\notag
\end{align}
For the second term given on the right hand side
of \eqref{smooth-AB}, using the induction hypothesis, we get
\begin{align*}
&\sum\limits_{p=0}^{j-1}\|S_{\theta_j}(\frac{\partial_yu^p_{\theta_p}}
{\partial_y u^s}
w^p)\|_{L^\infty}\le\sum\limits_{p=0}^{j-1}\|\frac{\partial_y
u^p_{\theta_p}}{\partial_y u^s}
\|_{L^\infty}\|w^p\|_{L^\infty}\\
&\leq M_s\sum\limits_{p=0}^{j-1}\|w^p\|_{\cA^2_{0}}\le
M_sC_0\epsilon\sum\limits_{p=0}^{j-1}\theta_p^{3-\tilde{k}}
\Delta\theta_p\leq M_sC_0\tilde{C}\epsilon\,.
\end{align*}

For the first term on the right hand side of  \eqref{smooth-AB}, using Lemma
\ref{lemm-commutators} gives when $\tilde{k}\ge 7$,
\begin{align*}
&\sum\limits_{p=0}^{j-1}\|S_{\theta_j}\left(\Big(\partial_y
\frac{\partial_yu^p_{\theta_p}}
{\partial_y u^s}\Big)\int_0^y
w^p(t,x,\tilde{y})d\tilde{y}\right)\|_{L^\infty}\\
&=\sum\limits_{p=0}^{j-1}\|S_{\theta_j}\left\{\partial_y
\left(\frac{\partial_yS_{\theta_p}
\tilde{u}^0}{\partial_y u^s}+\sum\limits_{q=0}^{p-1}
\frac{\partial_y S_{\theta_p}\delta u^q}{\partial_y u^s}\right)\int_0^y
w^p(t,x,\tilde{y})d\tilde{y}\right\}\|_{L^\infty}\\
&\le \sum\limits_{p=0}^{j-1}\Big\{\bar{C}_\ell\|w^p\|_{L^2_{y,\ell}(L^\infty_{t,x})}
\Big(\|\partial_y\frac{\partial_yS_{\theta_p}\tilde{u}^0}
{\partial_y u^s}\|_{L^\infty}+\sum\limits_{q=0}^{p-1}
\big(\theta_p^2\|\frac{\delta u^q}{\partial_y u^s}\|_{L^\infty}\\
&\qquad+\|\partial_y[\frac{1}{\partial_y u^s}, \partial_yS_{\theta_p}]
\delta u^q\|_{L^\infty}
\big)\Big)\Big\}
\\
&\le \tilde{C}\bar{C}_\ell C_0\epsilon\,\sum\limits_{p=0}^{j-1}
\theta_p^{3-\tilde{k}}\Delta\theta_p \le \tilde{C}^2\bar{C}_\ell C_0\epsilon\,.
\end{align*}

In summary, we conclude
\begin{align}\label{eq5.17}
 &\|\frac{\partial_yu^j_{\theta_j}}{\partial_y u^s}\|_{L^\infty}\leq 1
 +\|\frac{\partial_y\tilde{u}^0_{\theta_j}}{\partial_y u^s}\|_{L^\infty}
 +\|\sum\limits_{p=0}^{j-1}\frac{\partial_y S_{\theta_j}\delta u^p}
 {\partial_y u^s}\|_{L^\infty}\\
 &\leq 1+C^a \epsilon+
 \tilde{C}^2\bar{C}_\ell C_0\epsilon+M_sC_0\tilde{C}\epsilon  +
 B C_1  \tilde{C}\epsilon,\notag
\end{align}
which implies the estimate \eqref{eq-5001} by choosing
$$
M_s\ge 2\Big(1+C^a\epsilon+\tilde{C}^2\bar{C}_\ell C_0\epsilon+B C_1\tilde{C}\epsilon\Big),
$$
and
$$
0<\epsilon\le  \frac1{2C_0 \tilde{C}}.
$$

\smallskip
To prove Lemma \ref{lemm-eta-n} for $\eta^n$, we need  the following
\begin{lemm}
Under the  assumptions of Lemma \ref{lemm5001},
there exists a constant $C_{11}$, such that for all $0\le j\le n$,
\begin{equation}
\|\frac{\partial_y(u^j_{\theta_j}-u^s)}{\partial_y u^s}\|_{\cA^k_{\ell}}
\leq C_{11}\epsilon
\theta_j^{\max\{0, k+1-\tilde{k}\}}.\label{eq-5002}
\end{equation}
\end{lemm}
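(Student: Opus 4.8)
The plan is to proceed by induction on $j$, exactly parallel to the structure used in the proof of Lemma \ref{lemm5001}, but now tracking the full $\cA^k_\ell$ norm instead of only the $L^\infty$ norm. The starting point is the decomposition
\begin{equation*}
\frac{\partial_y(u^j_{\theta_j}-u^s)}{\partial_y u^s}
=\frac{\partial_y S_{\theta_j}\tilde u^0}{\partial_y u^s}
+\sum_{p=0}^{j-1}\frac{\partial_y S_{\theta_j}\delta u^p}{\partial_y u^s},
\end{equation*}
so that the contribution from $\tilde u^0$ is bounded by $C^a\epsilon$ using \eqref{f-a-1} and the commutator estimate \eqref{commutator-1}, while the sum over $p$ must be controlled by the induction hypothesis together with the bounds \eqref{esti-delta-u}, \eqref{Linfty}, \eqref{D-k-1} of Lemma \ref{lemm-delta-u-j} and the $L^\infty$ bound \eqref{eq-5001} of Lemma \ref{lemm5001}.

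For the summand with index $p$, I would insert the identity \eqref{coef}, which rewrites $\dfrac{\partial_y S_{\theta_j}\delta u^p}{\partial_y u^s}$ as
\begin{equation*}
S_{\theta_j}\!\Big(\partial_y\frac{\delta u^p}{\partial_y u^s}\Big)
+S_{\theta_j}\!\Big(\frac{\delta u^p}{\partial_y u^s}\,\frac{\partial_y^2 u^s}{\partial_y u^s}\Big)
+\Big[\frac{1}{\partial_y u^s},S_{\theta_j}\Big](\partial_y\delta u^p).
\end{equation*}
The commutator term is handled directly by \eqref{commutator-1}, which bounds its $\cA^k_\ell$ norm by $C_k\|\delta u^p/\partial_y u^s\|_{\cA^k_\ell}$; the second term is handled by the Morse inequality of Lemma \ref{lemm2.1} together with the $\cC^{k}_0$-bound on $\partial_y^2 u^s/\partial_y u^s$ from \eqref{result-u-s}; and the first term, after applying the smoothing bound $\|S_{\theta_j}\|\le C_\rho$ from \eqref{f2.5}, reduces again to $\|\partial_y(\delta u^p/\partial_y u^s)\|_{\cA^{k-1}_\ell}$, which is of the same type. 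In each case one arrives, via \eqref{delta-uj} and Lemma \ref{lemm2.1}, at a bound of the form $\|\delta u^p/\partial_y u^s\|_{\cA^k_\ell}\lesssim \|\partial_y u^p_{\theta_p}/\partial_y u^s\|_{\cA^k_\ell}\,\|w^p\|_{\cA^2_\ell}+\|\partial_y u^p_{\theta_p}/\partial_y u^s\|_{L^\infty}\|w^p\|_{\cA^k_\ell}$, so that the induction hypothesis \eqref{eq-5002} at level $p<j$, combined with the $w^p$-bounds \eqref{wn}, produces a factor $\epsilon^2\,\theta_p^{\max\{3-\tilde k,\,k-\tilde k\}}\Delta\theta_p$ (plus a linear-in-$\epsilon$ piece from the leading $1$). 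Summing in $p$ using the arithmetic inequality \eqref{ineq} yields the claimed power $\theta_j^{\max\{0,\,k+1-\tilde k\}}$ (the extra $\theta_j$-power coming, when present, from the $\partial_y S_{\theta_j}$ acting on the low-regularity part, exactly as in the estimate \eqref{eq5.20}).

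Finally I would close the induction by absorbing the $\epsilon^2$-term into the $C_{11}\epsilon$ on the right-hand side: choosing $C_{11}$ large relative to the structural constants $C^a$, $C_\rho$, $M_k$, $\tilde C$, $M_s$, $C_0$, and then choosing $\epsilon$ (and $\theta_0$) small enough that the self-referential coefficient satisfies $C(\text{constants})\,(\,\tilde C\epsilon+\theta_0^{-1}\,)\le 1$, exactly the device used at the end of the proof of Lemma \ref{lemm-delta-u-j}. The main obstacle I anticipate is purely bookkeeping: one must keep careful track of the split between the ``low-derivative times $\theta_j$'' piece and the ``high-derivative'' piece when $\partial_y S_{\theta_j}$ lands on $\tilde u^0$ versus on $\delta u^p$, so that the borderline cases $k=\tilde k-1$ and $k\ge\tilde k$ both come out with the sharp exponent $\max\{0,k+1-\tilde k\}$ rather than something weaker; the smoothing estimate \eqref{f2.5} must be used with the optimal choice of intermediate order in each regime, and the commutator estimates \eqref{commutator-1}–\eqref{commutator-2} must be invoked (the latter contributing the extra factor $\theta_j$ precisely where it is allowed).
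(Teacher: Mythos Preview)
Your proposal is correct and follows essentially the same approach as the paper: induction on $j$, the decomposition via \eqref{coef}, the rewriting of $\partial_y(\delta u^p/\partial_y u^s)$ through \eqref{delta-uj} (which is precisely the identity \eqref{smooth-AB}), Morse-type product estimates from Lemma \ref{lemm2.1} combined with the induction hypothesis and the $L^\infty$ bound \eqref{eq-5001}, and the extra factor $\theta_j$ from the smoothing property \eqref{f2.5} when a $\partial_y$ must be absorbed. The paper executes exactly this plan, treating the ranges $k\ge 4$ and $k=3$ separately in the final step.
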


{\it Proof:} This lemma can also be proved by induction on $j$. Suppose that it holds for
$0\le p\le j-1$, let us study the case when $p=j$.

From the identity
$$\frac{\partial_y(u^j_{\theta_j}-u^s)}{\partial_y u^s}=
\sum\limits_{p=0}^{j-1}
\frac{\partial_y S_{\theta_j}\delta u^p}{\partial_y u^s}+\frac{\partial_y S_{\theta_j}\tilde{u}^0}{\partial_y u^s},
$$
we have
\begin{align}
&\|\frac{\partial_y(u^j_{\theta_j}-u^s)}{\partial_y u^s}\|_{\cA^k_{\ell}}\le \|\sum\limits_{p=0}^{j-1}
\frac{\partial_y S_{\theta_j}\delta u^p}
{\partial_y u^s}\|_{\cA^k_{\ell}}+\|\frac{\partial_y S_{\theta_j}\tilde{u}^0}{\partial_y u^s}
\|_{\cA^k_{\ell}}\notag\\
&\qquad
\le  \sum\limits_{p=0}^{j-1}\Big\{\|S_{\theta_j}(\partial_y
\frac{\delta u^p}{\partial_y u^s})\|_{\cA^k_{\ell}}
+\|S_{\theta_j}(\delta u^p\frac{ \partial^2_y u^s}
{(\partial_y u^s)^2})\|_{\cA^k_{\ell}}\label{eq6.34}\\
&\qquad+\|[\frac{1}{\partial_y u^s}, S_{\theta_j}](\partial_y(\delta u^p))
\|_{\cA^k_{\ell}}\Big\}
+\|\frac{\partial_y S_{\theta_j}\tilde{u}^0}{\partial_y u^s}\|_{\cA^k_{\ell}}\notag\\
&\qquad\le \sum\limits_{p=0}^{j-1}\Big\{\|S_{\theta_j}(\partial_y
\frac{\delta u^p}{\partial_y u^s})\|_{\cA^k_{ \ell}}
+\tilde{B}
\|\frac{\delta u^p}{\partial_y u^s}\|_{\cA^k_{ \ell}}\Big\}+\|\frac{\partial_y S_{\theta_j}
\tilde{u}^0}{\partial_y u^s}\|_{\cA^k_{\ell}}\notag.
\end{align}

To estimate $\|S_{\theta_j}(\partial_y
\frac{\delta u^p}{\partial_y u^s})\|_{\cA^k_{\ell}}$, we use the relation \eqref{smooth-AB}.
For the second term on the right hand side of \eqref{smooth-AB}, by using \eqref{eq-5001} we have
\begin{align*}
&\sum\limits_{p=0}^{j-1}\|S_{\theta_j}(\frac{\partial_yu^p_{\theta_p}}
{\partial_y u^s}
w^p)\|_{\cA^k_{\ell}}\le  M_{k}
\sum\limits_{p=0}^{j-1}\Big\{\|\frac{\partial_y
u^p_{\theta_p}}{\partial_y u^s}
\|_{L^\infty}\|w^p\|_{\cA^{k}_{\ell}}
+\|\frac{\partial_y(u^p_{\theta_p}-u^s)}{\partial_y u^s}
\|_{\cA^{k}_{\ell}}\|w^p\|_{L^\infty}\Big\}
\\
&\le M_{k}\sum\limits_{p=0}^{j-1}\Big\{M_s C_0\epsilon\,\theta^{\max\{3-\tilde{k},\,k-\tilde{k}\}}_p
\Delta\theta_p+
C_{11}C_0\epsilon^2\theta^{\max\{3-\tilde{k},k+4-2\tilde{k}\}}_p\Delta\theta_p\Big\}\\
&\leq M_k \tilde{C}\,C_0\epsilon(M_s+C_{11}\epsilon)
\theta_j^{\max\{0, k+1-\tilde{k}\}}.
\end{align*}

For the first term on the right hand side of  \eqref{smooth-AB}, we have
 for $k\ge 4$,
\begin{align*}
&\sum\limits_{p=0}^{j-1}\|S_{\theta_j}\left(\Big(\partial_y
\frac{\partial_yu^p_{\theta_p}}
{\partial_y u^s}\Big)\int_0^y
w^p(t,x,\tilde{y})d\tilde{y}\right)\|_{\cA^k_{ \ell}}\\
&\le M_k\theta_j \bar{C}_\ell\sum\limits_{p=0}^{j-1}
\Big\{\|w^p\|_{L^2_{y,\ell}(L^\infty_{t,x})}
\|\partial_y\frac{\partial_y({u}^p_{\theta_p}-u^s)}
{\partial_y u^s}\|_{\cA^{k-1}_{ \ell}}
+\|w^p\|_{\cA^{k-1}_{ \ell}}
\|\partial_y\frac{\partial_y{u}^p_{\theta_p}}
{\partial_y u^s}\|_{L^2_{y, \ell}(L^\infty_{t, x})}\Big\}
\\
&\le M_k\theta_j \bar{C}_\ell\sum\limits_{p=0}^{j-1}\Big\{\|w^p\|_{\cA^2_{ \ell}}
\|\frac{\partial_y({u}^p_{\theta_p}-u^s)}
{\partial_y u^s}\|_{\cA^{k}_{ \ell}}
+\|w^p\|_{\cA^k_{\ell}}
\|\frac{\partial_y({u}^p_{\theta_p}-u^s)}
{\partial_y u^s}\|_{\cA^3_{\ell}}\Big\}
\\
&\le M_k\theta_j \bar{C}_\ell\sum\limits_{p=0}^{j-1}\Big\{C_0
\theta_p^{3-\tilde{k}}
\Delta\theta_pC_{11}\epsilon^2
\theta_p^{\max\{0, k+1-\tilde{k}\}}
+C_0\theta_p^{\max\{3-\tilde{k},\, k-\tilde{k}\}}
\Delta\theta_p
\,C_{11}\epsilon^2 \Big\}\\
&\le 2M_k\bar{C}_\ell C_0\tilde{C}C_{11} \epsilon^2 \theta_j^{\max\{1,\, k+2-\tilde{k}\}}\,,
\end{align*}
and
\begin{align*}
&\sum\limits_{p=0}^{j-1}\|S_{\theta_j}\left(\Big(\partial_y
\frac{\partial_yu^p_{\theta_p}}
{\partial_y u^s}\Big)\int_0^y
w^p(t,x,\tilde{y})d\tilde{y}\right)\|_{\cA^3_{\ell}}\le B_1 \epsilon^2 \,,
\end{align*}
for a positive constant $B_1$.

By plugging the above three estimates into the first term on the right hand side of \eqref{eq6.34},
\eqref{eq-5002} follows  by choosing a proper constant $C_{11}>0$.

\begin{rema}
From the above argument, it is easy to see that the estimates \eqref{eq-5001} and \eqref{eq-5002}
hold without mollifying $(\,\cdot\,)_{\theta_j}$, but for $k\le k_0-1$.
\end{rema}

We are now ready to prove Lemma \ref{lemm-inverse}.

\smallskip
\noindent\underline{\bf Proof of Lemma \ref{lemm-inverse}:} From the estimate \eqref{eq5.17},
we get immediately that there is $\epsilon_0>0$ such that when $0<\epsilon\le \epsilon_0$, it holds that
\begin{equation}\label{bound}
\|\frac{\partial_yu_{\theta_j}^j}{\partial_yu^s}\|_{L^\infty}\le 2, \quad \inf|\frac{\partial_y
u_{\theta_j}^j}{\partial_yu^s}|\ge \frac{1}{2},
\end{equation}
for  $0\le j\le n$. Hence,  the first estimate given in \eqref{esti-inverse} follows.

By using the following F\`a Di Bruno formula,
$$
\partial^m( g(f))=m!
\sum_{1\le r\le m}\frac{1}{r!} g^{(r)}(f)\prod_{m_1+\cdots m_r=m, m_j\ge 1}
\frac{1}{m_j!}\partial^{m_j}f,
$$
we have,
\begin{align*}
\|(\frac{\partial_y u^n_{\theta_n}}{\partial_y u^s})^{-1}\|_{\dot{\mathcal{A}}_{0}^{k}}
\leq& B_k \sum_{1\le k_1+\left[\frac{k_2+1}{2}\right]\leq k}\sum_{1\le r\le k_1+k_2}\|\prod\limits_{1\le j\le r}
\partial^{m^1_j}_\cT\partial^{m^2_j}_y\Big(\frac{\partial_y u^n_{\theta_n}}{\partial_y u^s}\Big)
\|_{L^2}\\
\leq& B_k \sum_{1\le k_1+\left[\frac{k_2+1}{2}\right]\leq k}\sum_{1\le r\le k_1+k_2}
\|
\partial^{m^1_r}_\cT\partial^{m^2_r}_y\Big(\frac{\partial_y (u^n_{\theta_n}-u^s)}{\partial_y u^s}\Big)
\|_{L^2}\\
&\qquad\times\prod\limits_{1\le j\le r-1}\|
\partial^{m^1_j}_\cT\partial^{m^2_j}_y\Big(\frac{\partial_y (u^n_{\theta_n}-u^s)}{\partial_y u^s}\Big)
\|_{L^\infty}\\
\leq& B_k \sum_{1\le k_1+\left[\frac{k_2+1}{2}\right]\leq k}\sum_{1\le r\le k_1+k_2}
\|\Big(\frac{\partial_y (u^n_{\theta_n}-u^s)}{\partial_y u^s}\Big)
\|_{\cA^{m^1_r+[\frac{m^2_r+1}{2}]}_0}\\
&\qquad\times\prod\limits_{1\le j\le r-1}\|\Big(\frac{\partial_y (u^n_{\theta_n}-u^s)}{\partial_y u^s}\Big)
\|_{\cA^{m^1_j+2+[\frac{m^2_j+2}{2}]}_0},
\end{align*}
where $m^1_1+\cdots m^1_r=k_1, m^2_1+\cdots m^2_r=k_2,
m^1_j+m^2_j\ge 1$, with $m_r^1$ and $m_r^2$ being supposed to be the largest integers in the corresponding
group of indices, respectively. Then, by using \eqref{eq-5002} in the above inequality, it follows
\begin{align}
 \|(\frac{\partial_y u^n_{\theta_n}}{\partial_y u^s})^{-1}\|_{\dot{\mathcal{A}}_{0}^{k}}
 \leq C_7\epsilon\, \theta_n^{\max\{0, k+1-\tilde{k}\}}, \label{bruno}
\end{align}
for a positive constant $C_7>0$.
This completes the proof of the lemma.

\smallskip
Now, we turn to  the estimates on $\eta^n=\frac{\partial^2_yu^n_{\theta_n}}
{\partial_yu^n_{\theta_n}}$ stated in Lemma \ref{lemm-eta-n}.

\smallskip
\noindent\underline{\bf Proof of Lemma \ref{lemm-eta-n}:} Obviously, we have
\begin{eqnarray}\label{6.34}
\|\eta^n-\bar\eta^n\|_{\mathcal{A}_{\ell}^{k}}&=
&\|\frac{\partial^2_y(u_{\theta_n} ^n-u^s)}{\partial_y
u_{\theta_n} ^n}\|_{\mathcal{A}_{\ell}^{k}}\\
&\le& M_k\left( \|\frac{\partial^2_y(u_{\theta_n}^n-u^s)}
{\partial_y u^s}\|_{\mathcal{A}_{\ell}^{k}}\|
(\frac{\partial_y u_{\theta_n}^n}{\partial_y u^s})^{-1}
\|_{L^\infty}\right.\nonumber
\\
&& \left.+\|\frac{\partial^2_{y}(u^n_{\theta_n} -u^s)}
{\partial_y u^s}\|_{L^\infty_\ell}\|\frac{\partial_y u^s}
{\partial_y u^n_{\theta_n}}\|_{\dot{\mathcal{A}}_{0}^{k}}
\right).\nonumber
\end{eqnarray}

We now estimate term by term on  the right hand side of \eqref{6.34}.
Since
$$
\frac{\partial^2_y( u_{\theta_n}^n-u^s)}{\partial_y u^s}=
\partial_y\Big(\frac{\partial_y( u_{\theta_n}^n-u^s)}{\partial_y u^s}\Big)+
\frac{\partial_y( u_{\theta_n}^n-u^s)}{\partial_y u^s}\frac{\partial^2_y u^s}{\partial_y u^s},
$$
by using \eqref{eq-5002}, we have for $k\ge 4$
\begin{align}
&\|\frac{\partial^2_y(u_{\theta_n}^n-u^s)}{\partial_y u^s}
\|_{\mathcal{A}_{\ell}^{k}}\nonumber\\
&\le M_k\|\frac{\partial^2_y u^s}{\partial_y u^s}\|_{{\cC}^{k}_{0}}
\|\frac{\partial_y( u_{\theta_n}^n-u^s)}{\partial_y u^s}
\|_{\mathcal{A}_{\ell}^{k}}+
\|\partial_y\Big(\frac{\partial_y( u_{\theta_n}^n-u^s)}{\partial_y u^s}\Big)
\|_{\mathcal{A}_{\ell}^{k}}\label{eq-5028}
\\
&\le M_k\|\frac{\partial^2_y u^s}{\partial_y u^s}\|_{{\cC}^{k}_{0}}
\|\frac{\partial_y( u_{\theta_n}^n-u^s)}{\partial_y u^s}
\|_{\mathcal{A}_{\ell}^{k}}+
C_\rho\theta_n\|\Big(\frac{\partial_y( u^n-u^s)}{\partial_y u^s}\Big)
\|_{\mathcal{A}_{\ell}^{k}}\nonumber\\
&\le \tilde{C}_{11}\epsilon\, \theta_n^{1+\max\{0, k+1-\tilde{k}\}}.\notag
\end{align}

On the other hand, by using \eqref{eq-5002}, it holds that
\begin{align}
\|\frac{\partial^2_y(u_{\theta_n}^n-u^s)}{\partial_y u^s}
\|_{\mathcal{A}_{\ell}^{3}}
&\le M_3\|\frac{\partial^2_y u^s}{\partial_y u^s}\|_{{\cC}^{3}_{0}}
\|\frac{\partial_y( u_{\theta_n}^n-u^s)}{\partial_y u^s}
\|_{\mathcal{A}_{\ell}^{3}}+
\|\frac{\partial_y( u_{\theta_n}^n-u^s)}{\partial_y u^s}
\|_{\mathcal{A}_{\ell}^{4}}\notag
\\
&\le (M_3\|\frac{\partial^2_y u^s}{\partial_y u^s}\|_{{\cC}^{3}_{0}}+1)C_{11}\epsilon.\label{eq-5028-3}
\end{align}

Plugging the estimates \eqref{eq-5028}, \eqref{eq-5028-3} and \eqref{bruno} into \eqref{6.34},
we obtain the estimate \eqref{esti-eta-n} given in Lemma \ref{lemm-eta-n}.

To derive the estimate \eqref{esti-eta-n-1}, from the definition of $\bar\eta^n$, we have
\begin{eqnarray*}
\|\bar\eta^n\|_{{\mathcal C}_{\ell}^{k}}
&\le& M_k\left( \|\frac{\partial^2_yu^s}{\partial_y u^s}
\|_{{\mathcal C}_{\ell}^{k}}\|\Big(\frac{\partial_y
 u^n_{\theta_n}}{\partial_y u^s}\Big)^{-1}\|_{L^\infty}+
\|\frac{\partial^2_{y}u^s}{\partial_y u^s}\|_{L^\infty_\ell}
\|\Big(\frac{\partial_y u^n_{\theta_n}}{\partial_y u^s}\Big)^{-1}
\|_{\dot{{\mathcal C}}_{0}^{k}}\right)\\
&\le& M_k\left( \|\frac{\partial^2_yu^s}{\partial_y u^s}
\|_{{\mathcal C}_{\ell}^{k}}\|\Big(\frac{\partial_y
 u^n_{\theta_n}}{\partial_y u^s}\Big)^{-1}\|_{L^\infty}+
\|\frac{\partial^2_{y}u^s}{\partial_y u^s}\|_{L^\infty_\ell}
\|\Big(\frac{\partial_y u^n_{\theta_n}}{\partial_y u^s}\Big)^{-1}
\|_{\dot{{\mathcal A}}_{0}^{k+2}}\right)\\
&\le &
 M_k(2\|\frac{\partial^2_yu^s}{\partial_y u^s}
\|_{{\mathcal C}_{\ell}^{k}} +\|\frac{\partial^2_{y}u^s}{\partial_y u^s}\|_{L^\infty_\ell}C_7\epsilon
\theta_n^{\max\{0,\, k+3-\tilde{k}\}}),
\end{eqnarray*}
where we have used \eqref{bruno} and \eqref{bound}.
Thus, we get the estimate \eqref{esti-eta-n-1} immediately. And this completes
the proof of the lemma.

To estimate $\zeta^n$, similar to the proof for Lemma \ref{lemm5001}, from
$$\begin{array}{lll}
\frac{\partial^2_{xy}S_{\theta_j}u^j}{\partial_y u^s}&=
&\frac{\partial^2_{xy} (S_{\theta_j}\tilde{u}^0)}{\partial_y u^s}
+\sum_{p=0}^{j-1} \left(S_{\theta_j}(\frac{\partial^2_{xy} \delta u^p}
{\partial_y u^s}) +[\frac{1}{\partial_y u^s}, S_{\theta_j}]
\partial_{x y}^2 \delta u^p\right)\\
&=& \frac{\partial^2_{xy} (S_{\theta_j}\tilde{u}^0)}{\partial_y u^s}
+\sum_{p=0}^{j-1} \left(S_{\theta_j}\left(\partial_x w^p+ \partial_x
(\frac{\delta u^p}{\partial_y u^s})\frac{\partial_y^2 u^s}{\partial_y u^s}\right)
+[\frac{1}{\partial_y u^s}, S_{\theta_j}]\partial_{x y}^2 \delta u^p\right),
\end{array}
$$
we have
\begin{lemm}\label{lemm6.11}
Under the assumptions of Lemma \ref{lemm5001}, there is a constant $C_{12}>0$, such that
$$
\|\frac{\partial_{xy}^2u_{\theta_j}^j}{\partial_yu_{\theta_j}^j}
\|_{\mathcal{A}_{\ell}^{k}}
\le C_{12}\epsilon \theta_j^{1+\max\{0, k+1-\tilde{k}\}},
$$
and
$$
\|\frac{\partial_{xy}^2u_{\theta_j}^j}{\partial_yu_{\theta_j}^j}
\|_{\mathcal{A}_{\ell}^{3}}
\le C_{12}\epsilon,
$$
hold for all $0\le j\le n$.
\end{lemm}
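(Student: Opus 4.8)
The plan is to mirror the proofs of Lemma~\ref{lemm5001}, of the lemma producing~\eqref{eq-5002}, and of Lemma~\ref{lemm-eta-n}, carrying out an induction on $j$. First I would isolate the good denominator $\partial_y u^s$: writing $\partial^2_{xy} u^j_{\theta_j}/\partial_y u^j_{\theta_j}=\big(\partial^2_{xy} u^j_{\theta_j}/\partial_y u^s\big)\cdot\big(\partial_y u^j_{\theta_j}/\partial_y u^s\big)^{-1}$ and applying the Morse inequality of Lemma~\ref{lemm2.1},
\begin{align*}
\Big\|\frac{\partial^2_{xy} u^j_{\theta_j}}{\partial_y u^j_{\theta_j}}\Big\|_{\mathcal{A}^k_\ell}
&\le M_k\,\Big\|\frac{\partial^2_{xy} u^j_{\theta_j}}{\partial_y u^s}\Big\|_{\mathcal{A}^k_\ell}\,\Big\|\Big(\frac{\partial_y u^j_{\theta_j}}{\partial_y u^s}\Big)^{-1}\Big\|_{L^\infty}\\
&\quad+M_k\,\Big\|\frac{\partial^2_{xy} u^j_{\theta_j}}{\partial_y u^s}\Big\|_{L^\infty_\ell}\,\Big\|\Big(\frac{\partial_y u^j_{\theta_j}}{\partial_y u^s}\Big)^{-1}\Big\|_{\dot{\mathcal{A}}^k_0}.
\end{align*}
By Lemma~\ref{lemm-inverse} the two factors carrying the inverse are $\le2$ in $L^\infty$ and $\le C_7\epsilon\,\theta_j^{\max\{0,k+1-\tilde{k}\}}$ in $\dot{\mathcal{A}}^k_0$, while $\|\partial^2_{xy} u^j_{\theta_j}/\partial_y u^s\|_{L^\infty_\ell}\lesssim\|\partial^2_{xy} u^j_{\theta_j}/\partial_y u^s\|_{\mathcal{A}^3_\ell}$ by Sobolev embedding. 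So the whole statement reduces to proving $\|\partial^2_{xy} u^j_{\theta_j}/\partial_y u^s\|_{\mathcal{A}^k_\ell}\le C\epsilon\,\theta_j^{1+\max\{0,k+1-\tilde{k}\}}$ for $4\le k\le k_0$ and $\le C\epsilon$ for $k=3$; feeding this back, the extra factor $\theta_j$ is the one allowed by the statement and the cross term even gains a power of $\epsilon$.

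For this reduced estimate I would start from the identity displayed just before the statement — observing that $\partial_x u^s=0$, so $\partial^2_{xy} u^j_{\theta_j}=\partial^2_{xy} S_{\theta_j}u^j$ — and bound each group of terms on its right-hand side in $\mathcal{A}^k_\ell$, summing over $0\le p\le j-1$. The leading term $\partial^2_{xy}(S_{\theta_j}\tilde{u}^0)/\partial_y u^s=\partial_x\big(\partial_y S_{\theta_j}\tilde{u}^0/\partial_y u^s\big)$ is $O(\epsilon)$ by the commutator estimate~\eqref{commutator-1}, the smoothing bounds~\eqref{f2.5}, and the smallness of $\|\partial_y\tilde{u}^0/\partial_y u^s\|_{\mathcal{A}^{k_0+1}_\ell}$ from~\eqref{f-a-1}. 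The term $\sum_p\|S_{\theta_j}(\partial_x w^p)\|_{\mathcal{A}^k_\ell}\lesssim C_\rho\sum_p\|w^p\|_{\mathcal{A}^{k+1}_\ell}$ — with one derivative absorbed by~\eqref{f2.5} at a cost of $\theta_j$ when $k=k_0$ — is estimated by Theorem~\ref{theo-wn} and the summation rule~\eqref{ineq}. The term $\sum_p\|S_{\theta_j}\big(\partial_x(\delta u^p/\partial_y u^s)\cdot(\partial_y^2 u^s/\partial_y u^s)\big)\|_{\mathcal{A}^k_\ell}$ is treated by a further use of Lemma~\ref{lemm2.1}: $\partial_y^2 u^s/\partial_y u^s$ has bounded $\mathcal{C}^k_0$- and $L^\infty$-norms by~\eqref{result-u-s}, and $\|\delta u^p/\partial_y u^s\|$ in the relevant spaces is bounded exactly as in~\eqref{eq6.29}, i.e. via $\delta u^p/\partial_y u^s=(\partial_y u^p_{\theta_p}/\partial_y u^s)\int_0^y w^p$ together with the already-available~\eqref{eq-5001},~\eqref{eq-5002},~\eqref{wn}. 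Finally $\sum_p\|[\tfrac{1}{\partial_y u^s},S_{\theta_j}]\partial^2_{xy}\delta u^p\|_{\mathcal{A}^k_\ell}\lesssim\sum_p\|\partial_x(\delta u^p/\partial_y u^s)\|_{\mathcal{A}^k_\ell}\le\sum_p\|\delta u^p/\partial_y u^s\|_{\mathcal{A}^{k+1}_\ell}$ by~\eqref{commutator-1}, again summable via~\eqref{ineq}. Since the last two groups of terms only invoke the estimates for indices $p<j$, the induction on $j$ closes.

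The hard part will be the case $k=3$, where the bound must be $O(\epsilon)$ with no power of $\theta_j$: one has to check that each of the $p$-sums above genuinely converges with a $j$-independent constant. This works because $\tilde{k}\ge7$ makes the exponents that appear (such as $4-\tilde{k}$ and $5-\tilde{k}$) no larger than $-2$, so~\eqref{ineq} yields a bound independent of $j$, and because at $k=3$ the cross term in the Morse splitting carries an extra $\epsilon$ and, by Lemma~\ref{lemm-inverse}, no growing power of $\theta_j$. A minor nuisance is the borderline exponent $-1$, for which~\eqref{ineq} produces a logarithm in $j$; this is absorbed into the slack in the constant $C_{12}$, just as in the other technical estimates of Section~\ref{section7}.
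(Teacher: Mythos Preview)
Your proposal is correct and follows exactly the route the paper indicates: the paper only sketches this proof by displaying the identity for $\partial^2_{xy}S_{\theta_j}u^j/\partial_y u^s$ and declaring the rest ``similar to the proof for Lemma~\ref{lemm5001}'', and you have filled in precisely those details. Two minor remarks: no genuine self-referential induction on $j$ is needed here, since once \eqref{wn}, \eqref{eq-5001}, \eqref{eq-5002} and Lemma~\ref{lemm-inverse} are available the bound for each fixed $j$ follows directly from the identity; and with $\tilde k\ge7$ the exponents appearing in the $p$-sums at $k=3$ (such as $4-\tilde k$) are all $\le-3$, so the borderline logarithm you anticipate does not in fact arise.
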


With this, we are now ready to prove Lemma \ref{lemm-zeta}.

\smallskip
\noindent\underline{\bf Proof of Lemma \ref{lemm-zeta}:}
First of all, note that
\begin{align*}
\|\frac{v^n_{\theta_n}\partial_y^2 u^n_{\theta_n}}
{\partial_y u^n_{\theta_n}}\|_{\mathcal{A}_{\ell}^{k}}
&\le  M_k( \|v^n_{\theta_n}\|_{L^\infty}\|\eta^n-\bar{\eta}^n
\|_{\mathcal{A}_{\ell}^{k}}+\|v^n_{\theta_n}\|_{L^\infty_y(L^2_{t,x})}
\|\bar{\eta}^n\|_{{\mathcal{C}}_{\ell}^{k}}\\
& \qquad+\|\eta^n\|_{L^2_{y,\ell}(L^\infty_{t,x})}
\|v^n_{\theta_n}
\|_{{\mathcal{D}}_{0}^{k}}).
\end{align*}
By using Lemma \ref{lemm-u-v-n} and Lemma \ref{lemm-eta-n}, we get
\begin{align*}
\|\frac{v^n_{\theta_n}\partial_y^2 u^n_{\theta_n}}
{\partial_y u^n_{\theta_n}}\|_{\mathcal{A}_{\ell}^{k}}
& \le
M_k\left(C_3C_4\epsilon\theta_n^{\max\{1,k+2-\tilde{k}\}}+
C_3C_4(1+\epsilon\theta_n^{\max\{0,k+3-\tilde{k}\}})\right.\\
& \qquad
+\left.2C_4{C}_3\epsilon \theta_n^{\max\{1,k+2-\tilde{k}\}}\right)\notag\\
& \le
C_5\theta_n^{\max\{1,k+3-\tilde{k}\}},\notag
\end{align*}
when $k\ge 4$ for a positive constant $C_5$. Moreover,
\begin{align*}
\|\frac{v^n_{\theta_n}\partial_y^2 u^n_{\theta_n}}
{\partial_y u^n_{\theta_n}}\|_{\mathcal{A}_{\ell}^3} &
\le
M_3(C_3C_4\epsilon+
C_3{C}_4(1+\epsilon)
+2C_3C_4\epsilon )\le C_8.
\end{align*}

Similarly, by using \eqref{sob-C} and Lemma \ref{lemm6.11}, we can show that
$$
\begin{array}{lll}
\|\frac{u^n_{\theta_n}\partial_{xy}^2 u^n_{\theta_n}}
{\partial_y u^n_{\theta_n}}\|_{\mathcal{A}_{\ell}^{k}}
&\le & M_k( \|u^n_{\theta_n}\|_{L^\infty}\|\frac{\partial_{xy}^2
u^n_{\theta_n}}{\partial_y u^n_{\theta_n}}\|_{\mathcal{A}_{\ell}^{k}}
 +\|\frac{\partial_{xy}^2 u^n_{\theta_n}}
 {\partial_y u^n_{\theta_n}}\|_{L^\infty}
 \| u^n_{\theta_n}-u^s\|_{{\mathcal{A}}_{\ell}^{k}}\\
 && \qquad +
 \|\frac{\partial_{xy}^2 u^n_{\theta_n}}
 {\partial_y u^n_{\theta_n}}\|_{L^\infty_{y}(L^2_{t,x})}
 \| u^s\|_{{\mathcal{C}}_{\ell}^{k}})\\
 &\le & M_k\left(C_s(C^a+C_3\epsilon)C_{12}\epsilon
 \theta_n^{1+\max\{0, k+1-\tilde{k}\}}\right.\\
 & &\qquad\left. +C_s(C^a+{C}_3\epsilon\theta_n^{\max\{0, k+1-\tilde{k}\}})C_{12}\epsilon\right)\\
 & \le &
C_5\theta_n^{\max\{1,k+2-\tilde{k}\}}.
\end{array}
$$
By noticing that
$$
\frac{(\partial_t-\partial^2_y)\partial_y u^n_{\theta_n}}{\partial_y u^n_{\theta_n}}
=\frac{(\partial_t-\partial^2_y)\partial_y u^n_{\theta_n}}{\partial_y u^s}
\cdot\frac{\partial_y u^s}{\partial_y u^n_{\theta_n}}=
\frac{(\partial_t-\partial^2_y)\partial_y
(u^n_{\theta_n}-{u}^s)}{\partial_y u^s}
\frac{\partial_y u^s}{\partial_y u^n_{\theta_n}},
$$
using \eqref{eq-5002} and \eqref{bruno}, we have
$$
\|\frac{(\partial_t-\partial^2_y)\partial_y u^n_{\theta_n}}
{\partial_y u^n_{\theta_n}}\|_{\mathcal{A}_{\ell}^{k}}
\le C_5 \theta_n^{\max\{0,\, k+2-\tilde{k}\}}.
$$
Plugging the above estimates into the definition of $\zeta^n$  in \eqref{zeta}, it leads
to the  \eqref{eq-zeta} given in Lemma \ref{lemm-zeta} and then completes its proof.

\smallskip
Finally, let us give the proof of Lemma \ref{lemm-e-1}.

\smallskip
\noindent\underline{\bf Proof of Lemma \ref{lemm-e-1}:}
Recall the definition
$$
e^{(1)}_n=\delta u^n\partial_x\delta u^n+\delta v^n\partial_y \delta u^n,
$$
and
$$
\begin{array}{ll}
e_{n}^{(2)}=& \big((1-S_{\theta_n}) (u^n-u^s)\big) \partial_x(\delta u^n)+
\delta u^n \partial_x \big((1-S_{\theta_n}) (u^n-u^s)\big)\\[2mm]
&+ \delta v^n \partial_y \big((1-S_{\theta_n}) (u^n-u^s)\big)
+\big((1-S_{\theta_n}) v^n\big) \partial_y(\delta u^n).
\end{array}
$$
We get
\begin{eqnarray}
\|\frac{e^{(1)}_j}{\partial_yu^s}\|_{\mathcal{A}_{\ell}^{k_1}}
&\le& M_{k_1}\Big\{\|\partial_x\delta u^j
\|_{\cA^{k_1}_{\ell}}\|\frac{\delta u^j}{\partial_yu^s}\|_{L^\infty}+\|\partial_x\delta u^j
\|_{L^2_{y, \ell}(L^\infty_{t, x})}\|\frac{\delta u^j}{\partial_yu^s}\|_{\mathcal{D}_{\ell}^{k_1}}\nonumber
\\
&&+\|\delta v^j\|_{L^\infty}
\|\frac{\partial_y\delta u^j}{\partial_yu^s}\|_{\mathcal{A}_{\ell}^{k_1}}
+\|\frac{\partial_y\delta u^j}{\partial_yu^s}\|_{L^2_{y,l}(L^\infty_{t,x})}
\| \delta v^j\|_{\cD^{k_1}_0}\Big\}.\label{esti-e-j-1}
\end{eqnarray}

Obviously, from
$$\frac{\partial_y\delta u^j}{\partial_yu^s}=
\frac{\partial^2_yu_{\theta_j}^j}{\partial_yu^s}\int_0^y w^j(t,x,\tilde{y})d\tilde{y}+
\frac{\partial_yu_{\theta_j}^j}{\partial_yu^s}w^j,$$
we have
\begin{equation}\label{A-k}
\begin{array}{lll}
&&\|\frac{\partial_y\delta u^j}{\partial_yu^s}\|_{\cA^{k_1}_{\ell}}
\le  \|\frac{\partial^2_y(u_{\theta_j}^j-u^s)}{\partial_yu^s}\|_{\cA^{k_1}_{\ell}}
\|w^j\|_{\cC^0_\ell}+\|\frac{\partial^2_yu^s}{\partial_yu^s}\|_{\cC^{k_1}_{\ell}}
\|w^j\|_{L^2_\ell}\\
&&+\|\frac{\partial^2_yu_{\theta_j}^j}{\partial_yu^s}\|_{\cC^{0}_{\ell}}
\|w^j\|_{\cA^{k_1}_\ell}+\|\frac{\partial_yu_{\theta_j}^j}{\partial_yu^s}\|_{L^\infty}
\|w^j\|_{\cA^{k_1}_{\ell}}+
\|\frac{\partial_yu_{\theta_j}^j}{\partial_yu^s}\|_{\dot{\cA}^{k_1}}
\|w^j\|_{L^\infty_{\ell}}\\[2mm]
&\le & C_0C_{11}\epsilon^2\theta_j^{\max\{3-\tilde{k}, k_1+5-2\tilde{k}\}}\Delta\theta_j+(C_{11}\epsilon+C^a)
C_0\epsilon\theta_j^{\max\{3-\tilde{k},\,k_1-\tilde{k}\}}
\Delta\theta_j\\[2mm]
& & + C^aC_0\epsilon \theta_j^{3-\tilde{k}}\Delta\theta_j
+2C_0\epsilon\theta_j^{\max\{3-\tilde{k},\,k_1-\tilde{k}\}}
\Delta\theta_j+(C_{11}\epsilon\theta_j^{\max\{0,\,k_1+1-\tilde{k}\}}
+C^a)
C_0\epsilon\theta_j^{3-\tilde{k}}\Delta\theta_j,
\end{array}
\end{equation}
where we have used  \eqref{eq-5002}, \eqref{bound} and \eqref{wn}.

If we choose a constant
$$\tilde{C}_{11}\ge C_0(2+3C^a+3C_{11}\epsilon_0),$$
then from \eqref{A-k}, we get
\begin{equation}\label{A-k-1}
\|\frac{\partial_y\delta u^j}{\partial_yu^s}\|_{\cA^{k_1}_{\ell}}\le \tilde{C}_{11}\epsilon
\theta_j^{\max\{3-\tilde{k}, k_1-\tilde{k}\}}\Delta\theta_j,
\end{equation}
by using $\tilde{k}\ge 7$.

By using \eqref{A-k-1}, Lemmas \ref{lemm-delta-u-j} and \ref{lemm-delta-v-j}, from \eqref{esti-e-j-1},
there exists a constant $C_8>0$ such that
$$
\|\frac{e^{(1)}_j}{\partial_yu^s}\|_{\mathcal{A}_{\ell}^{k_1}}
\le C_8\epsilon^2\theta_j^{\max\{6-2\tilde{k},\, k_1+3-2\tilde{k}\}}\Delta\theta_j,
$$
for all $k_1\le k_0-1$.

Similarly,
\begin{align*}
\|\frac{e^{(2)}_j}{\partial_y u^s}\|_{\mathcal{A}_{\ell}^{k_1}}&\le
\|\Big((1-S_{\theta_j})(u^j-u^s)\Big)
\frac{\delta u^j}{\partial_y u^s}
\|_{\mathcal{A}_{\ell}^{k_1+1}}\\
&+ \|\delta v^j \,\frac{\partial_y((1-S_{\theta_j})(u^j-u^s))}{\partial_y u^s}\|_{\mathcal{A}_{\ell}^{k_1}}
+\|\Big((1-S_{\theta_j})v^j\Big) \frac{\partial_y(\delta u^j)}{\partial_y u^s}
\|_{\mathcal{A}_{\ell}^{k_1}},
\end{align*}
by using Lemma \ref{lemm2.1}, the second formula of \eqref{f2.5} and \eqref{ineq},
we get
\begin{align*}
&\|\frac{e^{(2)}_j}{\partial_y u^s}\|_{\mathcal{A}_{\ell}^{k_1}}
\le M_{k_1}\Big\{
\|u^j-u^s\|_{\mathcal{A}_{\ell}^{k_1+1}}\|
\frac{\delta u^j}{\partial_y u^s}
\|_{L^\infty}\\
&+
\|(1-S_{\theta_j})(u^j-u^s)\|_{L^2_{y,\ell}(L^\infty_{t, x})}
\|\frac{\delta u^j}{\partial_y u^s}
\|_{{\cD}^{k_1+1}_{0}}
\\
&+ \|\delta v^j\|_{L^\infty} \,\|\frac{\partial_y(u^j-u^s)}{\partial_y u^s}\|_{\mathcal{A}_{\ell}^{k_1}}
+
\|\delta v^j\|_{{\mathcal{D}}_{0}^{k_1}} \,\|(1-S_{\theta_j})(\frac{\partial_y
(u^j-u^s)}{\partial_y u^s})\|_{L^2_{y,\ell }(L^\infty_{t, x})}\\
&+\|(1-S_{\theta_j})v^j\|_{L^\infty}\| \frac{\partial_y(\delta u^j)}{\partial_y u^s}
\|_{\mathcal{A}_{\ell}^{k_1}}
+\|v^j\|_{\mathcal{D}_{0}^{k_1}}\,\|
\frac{\partial_y(\delta u^j)}{\partial_y u^s}
\|_{L^2_{y,\ell }(L^\infty_{t, x})}\Big\}\\
&\le M_{k_1}\Big\{
\|u^j-u^s\|_{\mathcal{A}_{\ell}^{k_1+1}}\|
\frac{\delta u^j}{\partial_y u^s}
\|_{L^\infty}+
\theta_j^{2-k'}\|u^j-u^s\|_{\cA^{k'}_{\ell}}
\|\frac{\delta u^j}{\partial_y u^s}
\|_{{\cD}^{k_1+1}_{0}}
\\
&+ \|\delta v^j\|_{L^\infty} \,\|\frac{\partial_y(u^j-u^s)}{\partial_y u^s}\|_{\mathcal{A}_{\ell}^{k_1}}
+
\theta_j^{2-k'}\|\delta v^j\|_{{\mathcal{D}}_{0}^{k_1}} \,\|\frac{\partial_y
(u^j-u^s)}{\partial_y u^s}\|_{\mathcal{A}_{\ell}^{k'}}\\
&+\theta_j^{2-k'}\|v^j\|_{\mathcal{D}_{0}^{k'}}\| \frac{\partial_y(\delta u^j)}{\partial_y u^s}
\|_{\mathcal{A}_{\ell}^{k_1}}
+\|v^j\|_{\mathcal{D}_{0}^{k_1}}\,\|
\frac{\partial_y(\delta u^j)}{\partial_y u^s}
\|_{L^2_{y,\ell }(L^\infty_{t, x})}\Big\},\end{align*}
for a fixed integer  $2\le k'\le k_0-2$.

By applying Lemma \ref{lemm-delta-u-j}, Lemma \ref{lemm-delta-v-j} and estimates
\eqref{eq-5002}, \eqref{A-k-1} to the above inequality, and by setting $k'\ge \tilde{k}-2$,
it follows that for $k_1\le k_0-1$,
$$
\|\frac{e^{(2)}_j}{\partial_y u^s}\|_{\mathcal{A}_{l}^{k_1}}
\le C_8
\epsilon^2 \theta_j^{\max(3-\tilde{k},k_1+5-2\tilde{k})}\Delta \theta_j,
$$
for a positive constant $C_8$. And this completes the proof of the lemma.

\bigskip
\noindent
{\bf Acknowledgements :}
The research of the first author was supported in part by the Zhiyuan
foundation, Shanghai Jiao Tong University and the National
Science Foundation of China No. 11171211.
The research of the second author was supported in part by the National
Science Foundation of China Nos. 10971134 and 11031001.  The
research of the third author was supported partially
by the Fundamental Research Funds for the Central Universities and
the National Science Foundation of China No. 11171261. The last
author's research was supported by the General Research Fund of Hong Kong,
CityU No. 104511, and the Croucher Foundation. The authors would like to express their gratitude to Weinan E
and Zhouping Xin for their valuable discussions.
\smallskip

\end{document}